\documentclass[11pt, reqno, oneside]{article}

\usepackage{amsfonts, color}
\usepackage[textwidth=460pt]{geometry}
\usepackage{amstext, amsthm, amssymb, amsmath, mathtools}
\usepackage{mathrsfs}
\usepackage{graphicx}
\usepackage{color}

\usepackage{hyperref}

\usepackage[capitalize,nameinlink,noabbrev]{cleveref}
\hypersetup{colorlinks = true, citecolor = blue}

\usepackage{tcolorbox} % For ERC funding

\crefname{assumption}{Assumption}{Assumptions}
\crefname{definition}{Definition}{Definitions}

\DeclarePairedDelimiter{\tnorm}{\lVert\!\lVert}{\rVert\!\rVert}

\newcommand{\I}{\mathcal{I}}
\newcommand{\J}{\mathcal{J}}
\newcommand{\R}{\mathbb{R}}

\newcommand{\F}{\mathcal{F}}
\newcommand{\U}{\mathcal{U}}

\newcommand{\M}{\mathcal{M}}

\newcommand{\weak}{\rightharpoonup}
\newcommand{\G}{\mathcal{G}}

\newcommand{\NN}{\mathbb{N}}
\newcommand{\xth}{x^\theta}

\newcommand{\Rext}{\mathbb{R}\cup\{+\infty\}}
\newcommand{\Next}{\NN\cup\{+\infty\}}
\newcommand{\diff}{\mathrm{d}}
\newcommand{\cR}{\mathcal{R}}
\newcommand{\FundingLogos}{%
  \raisebox{0pt}{\includegraphics[height=1.5cm]{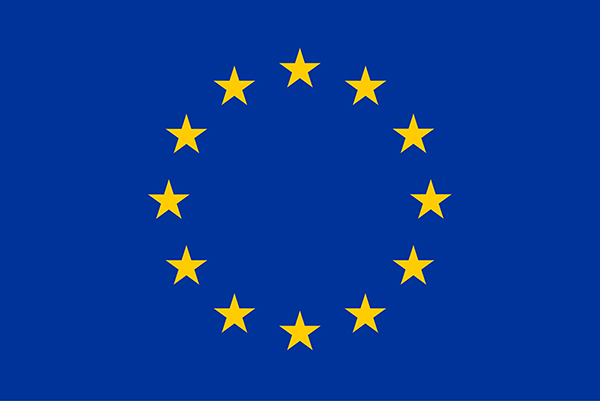}}%
  \hspace{1em}%
  \raisebox{0pt}{\includegraphics[height=1.5cm]{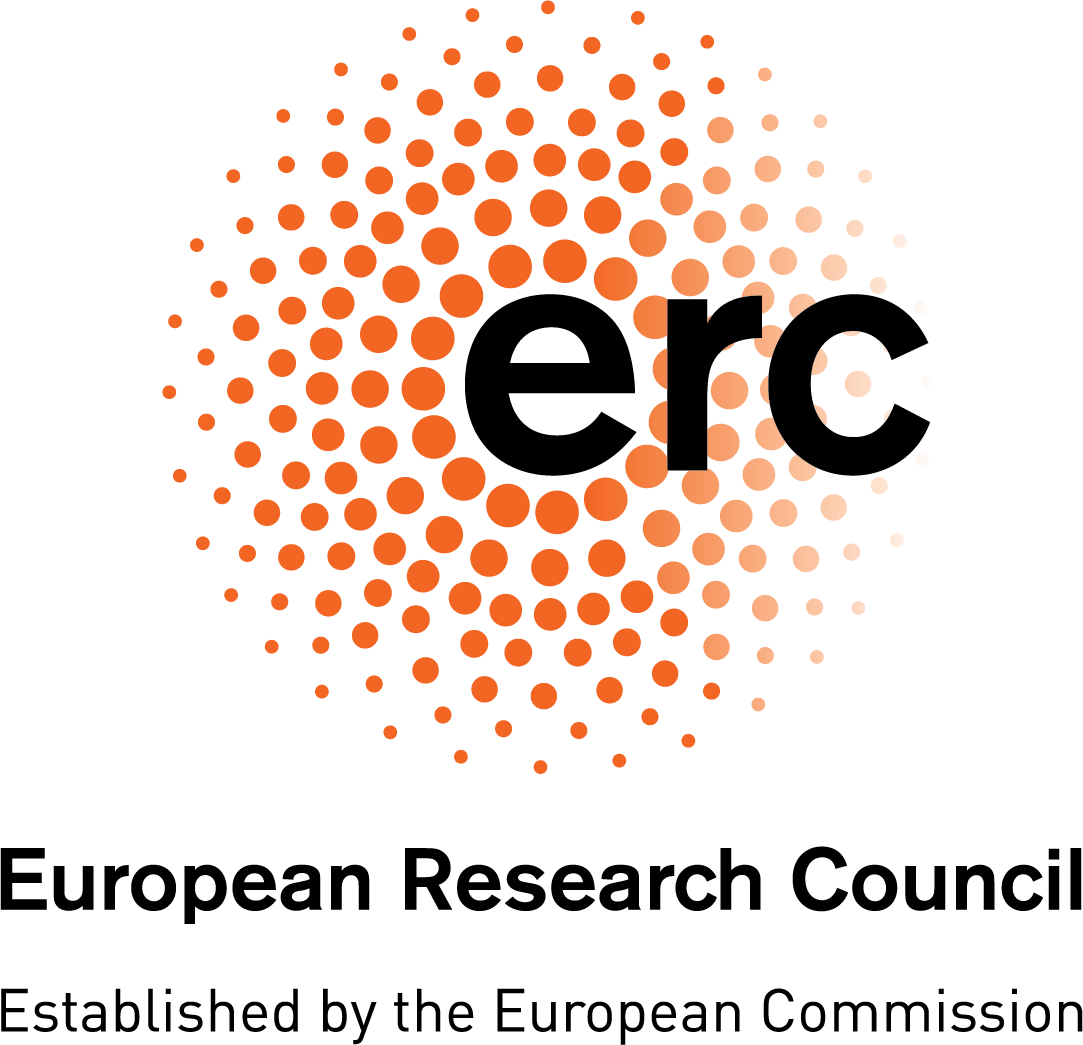}}%
}

\numberwithin{equation}{section}

\theoremstyle{plain}
\begingroup
\newtheorem{theorem}{Theorem}[section]
\newtheorem{lemma}[theorem]{Lemma}
\newtheorem{proposition}[theorem]{Proposition}

\endgroup

\theoremstyle{definition}
\begingroup

\newtheorem{assumption}{Assumption}
\newtheorem{definition}{Definition}
\endgroup

\theoremstyle{remark}
\newtheorem{remark}{Remark}

\title{Risk-Averse Ensemble Control for Control-Affine Systems}

\author{
Alessandro Scagliotti$^{1,2}$ \and
Thomas M.~Surowiec$^{3,4}$ \\
\\
$^1$CIT School, Technical University of Munich, Garching bei München, Germany \\
$^2$Munich Center for Machine Learning (MCML), Munich, Germany \\
$^3$Simula Research Laboratory, Oslo, Norway \\
$^4$SURE-AI Centre for Sustainable, Risk-Averse and Ethical AI, Oslo, Norway
\\
\texttt{scag@ma.tum.de}, \texttt{thomasms@simula.no}
}

\date{}

\begin{document}

\maketitle

\begin{abstract}
    A number of important modern applications in optimal control can be formulated as open loop control problems in which the underlying dynamical systems are subject to random inputs. These so-called \textit{ensemble control} problems require the corresponding optimal control to be deterministic, as it must be computed before the realization of uncertainty and the passage of time. Practical applications of ensemble control include quantum control and the training of Neural ODEs.
    However, the standard approach to ensemble control treats the uncertainty in the objective function via the expectation, which provides optimal controls that only work well on average while ignoring critical outlier phenomena. This study provides a comprehensive mathematical treatment of risk-averse ensemble control. 
    Within this setting, we adopt a control-affine structure that ensures the lower semi-continuity needed for proving the existence of optimal solutions. The central analytical contribution of this paper is a rigorous characterization of the control-to-state mapping in which we establish weak-to-strong continuity, continuous Fréchet differentiability, and weak-to-strong continuity of the derivative operator. Furthermore, this regularity yields primal and dual first-order optimality conditions characterized by an adjoint state of bounded variation, and it fulfills the functional prerequisites required for the convergence of infinite dimensional optimization algorithms. We conclude by validating these theoretical developments through a numerical experiment in quantum control.\\

    \noindent \textbf{Key words: } Simultaneous control, risk-averse optimal control, ensemble optimal control,  control-affine systems, regularity of the control-to-state mapping.\\
    \noindent \textbf{AMS subject classifications: } 49J55, 49K55, 49J27, 90C15, 93C10, 81Q93
\end{abstract}

\section{Introduction}

Ensemble control is a branch of modern optimal control that addresses the challenge of steering parameterized families of dynamical systems using a single broadcast control input \cite{LK06,RuLi12}, \emph{independent of the parameters}. 
As the parametric dependencies in these dynamical systems are typically understood as random inputs, the common approach in the literature is to treat this uncertainty by taking the expectation of the resulting random objective function \cite{BK19,BK24,Scag23,Aronna2025,Aronna2026,melnikov2024convergence}. 
In the parlance of stochastic optimization, this is known as the \textit{risk-neutral} setting, cf.~\cite{Shapiro2021}.

Optimizing solely for the expected value results in optimal controls that will most likely perform well on average, i.e., upon repeated application of the control to out-of-sample realizations of the dynamical system. However, this also permits arbitrarily large performance degradations on scenarios arising from the tail of the distribution of the random inputs. This failure to penalize an additional statistical measure of the random objective function, such as the variance, hinders the establishment of uniform performance guarantees across the parametric ensemble. The necessity for a \textit{risk-averse} paradigm becomes highly evident when examining modern applications such as machine learning and quantum control. Before addressing the literature and further structure of the paper, we briefly detail these two motivating applications.

The training of infinitely deep neural networks can be formalized as an ensemble optimal control problem governed by neural ordinary differential equations \cite{Haber2017,JMLR:v18:17-653,chen2018neural,RuizBalet2021,Hofmann2025} (see also \cite{Palladino2026} for a connection to reinforcement learning). Here, the shared network weights act as the control variables that must simultaneously steer an ensemble of trajectories generated by an underlying data distribution, whose samples provide the initial conditions for the dynamical system and thus constitute the main parametric dependence. The loss function depends the parametrically dependent states at the terminal-time and it measures their deviation from the target observations. 
A parallel challenge arises in quantum control  \cite{BCR10,BST15,ChiGau18}, specifically the quantum transfer with uncertain resonance frequency \cite{RABS}. In contrast to the machine learning example, the parametric uncertainty here enters directly into the governing vector field, representing variations in the natural resonance frequency across the physical ensemble. In this setting, a single broadcast electromagnetic pulse must be designed to robustly steer the quantum states despite this inhomogeneity.

In both applications, many practical computational approaches choose to minimize an empirical tracking cost in the mean square sense. This often stands in stark contrast to what the theory actually demands.
Since universal approximation theorems are typically formulated with error measured in the sup-norm \cite{Cybenko1989,Hornik1991,AS2}, the ideal optimal control should ensure that the resulting neural network approximates the \textit{unknown} continuous mapping uniformly. Similarly, achieving approximate ensemble controllability in the quantum setting requires a strict performance bound evaluated in the sup-norm \cite{Augier2018,RABS,Liang2026}. 
In these contexts, denoting with $\theta$ the parameter affecting the dynamics and/or the initial datum, the prototypical optimal control problems can be formulated as the search for an admissible control trajectory $t\mapsto u^*(t) \in \R^k$ (\emph{independent of the state variable and of the parameter}) such that:
\[
u^* \in \arg\min_{u \in \mathcal{U}} \left\{ \sup_{\theta \in \Theta}  |x^\theta_u (T) - x_{\text{target}}(\theta) |^2 + \frac{\alpha}{2} \int_{[0,T]} |u(t)|^2 \,\diff t \right\}
\]
with $\alpha>0$, and subject to the dynamics $\dot x^\theta_u (t) = F\big( x^\theta_u(t), u(t), \theta \big)$, $\xth_u(0) = x_0(\theta)$ for all $\theta \in \Theta$. 
However, due to the intrinsic non-smoothness of the previous optimization problem, in the practice it is common to replace the supremum and to consider instead objective functions of the type:
\[
u\mapsto \mathbb E_{\theta \sim \mu}\left[|x^\theta_u (T) - x_{\text{target}}(\theta)|^2\right] + \frac{\alpha}{2} \int_{[0,T]} |u(t)|^2 \, \diff t
\]
for some probability measure $\mu$ over $\Theta$. 
Because convergence in the mean square sense does not guarantee uniform bounds, optimal controls derived under standard expectations remain vulnerable to worst-case outliers.  The general method developed in our work allows us to bridge this analytical gap by introducing general convex risk measures $\mathcal{R}$ into the optimal control objective. Hence, this allows for objectives of the form
\[ u\mapsto 
\mathcal{R}_{\theta\sim\mu}\left[ |x^\theta_u (T) - x_{\text{target}}(\theta) |^2\right] + \frac{\alpha}{2} \int_{[0,T]} |u(t)|^2 \, \diff t.
\]
In the case where $\mathcal{R}$ is the Average-Value-at-Risk at confidence level $\beta \in (0,1)$, we can interpolate systematically between computationally tractable average performance ($\beta \to 0$) and the strict uniform bounds demanded by the sup-norm ideal ($\beta \to 1$). The latter is a consequence of the upper quantile function tending to the essential supremum as $\beta \to 1$. The exact effects on the optimal controls are discussed further in detail in \cref{rem:cvar-effect}.
We mention that the class of risk-averse ensemble problems considered in this work is broader than the prototypical examples presented above. Namely, given a risk measure $\mathcal{R}$, we address objective functions of the form
$$ u\mapsto  \mathcal{R}_{\theta\sim\mu}\big[ \J_u(\theta) \big] + \int_{[0,T]} f\big(t,u(t)\big) \,\diff t$$
subject to the control-affine dynamics in $\R^n$ given by $\dot x^\theta_u(t) = F\big( x_u^\theta(t), \theta \big)u(t)$ and $x^\theta(0)=x_0(\theta)$, and where
\begin{equation*} 
    \J_u(\theta) \coloneqq \int_{[0,T]}
    a\big( t, x_u^\theta(t), \theta \big)
    \,\diff \nu(t),
\end{equation*}
with $a\colon [0,T]\times \R^n\times \Theta \to \R$ designing the state-dependent cost, and where $\nu \in \mathcal{M}_+([0,T])$ is a non-negative Radon measure.
Here we mention that necessary optimality conditions for risk-averse stochastic control problems have been derived in \cite{Bonalli2023} by means of set-valued analysis tools.

Although much work has been done in the literature on risk-averse optimal control of random partial differential equations (PDEs) \cite{Surowiec_16,Kouri2018,ks2018existence}, risk-averse ensemble control requires a different set of mathematical tools due to the absence of elliptic partial differential operators. Similar to PDE-constrained optimization, we formulate the optimal control problem by lifting the governing random differential equations to an appropriate Bochner space setting. However, the lack of elliptic operators makes the analysis of the control-to-state mapping highly non-trivial. Within this infinite dimensional setting, we adopt a control-affine structure. On the one hand, this allows us to avoid the need for an analytical relaxation of the control space via Young measures to establish the existence of solutions. On the other hand, ensembles of control-affine systems represent a general and practically relevant class with diverse applications. 
In fact, even in linear-state systems, ensemble control is a source of inspiring questions (see, e.g., \cite{LoZu16} and the more recent contributions \cite{DS21, Dan22, Schoe23}). Moreover, the control-affine setting often appears in quantum problems (see, e.g., \cite{BCR10,BST15,ChiGau18}) and in the control of the Schr\"{o}dinger equation via diffeomorphism approximation (see \cite{Beauchard2025,Pozzoli2025} and \cite{Bartsch2019,Bartsch2021}).
Control-affine systems play a relevant role as well in the mathematical modeling of Deep Learning (see \cite{AS1,AS2,AL24} for universal approximation results).

The central analytical contribution of this paper is a rigorous topological characterization of the control-to-state mapping. Specifically, we establish its weak-to-strong continuity, its continuous Fréchet differentiability, and the weak-to-strong continuity of its derivative operator. These properties allow us to prove the existence of optimal controls for the nonsmooth risk-averse ensemble control problems and to derive primal and dual optimality conditions. The lack of smoothness in the objective functional, induced by the risk measure, partially accounts for the adjoint state lacking absolute continuity in time. Furthermore, the weak-to-strong continuity of the derivative operator is a major prerequisite for the convergence theory of infinite dimensional optimization algorithms \cite{Kouri2021}, providing the necessary analytical foundation for rigorous numerical implementation.

The remainder of this paper is organized as follows. \Cref{sec:prel} establishes the mathematical preliminaries by lifting the parametric ordinary differential equations to the appropriate Bochner space. \Cref{sec:cost-func} formalizes the tracking cost and defines the associated Nemytskij operators. \Cref{sec:risk_av_EOC} introduces the general risk-averse ensemble optimal control problem and proves the existence of minimizers. \Cref{sec:diff-tracking} contains the core topological analysis, detailing the continuous Fréchet differentiability of the control-to-state mapping and the strong convergence of its derivative along weakly convergent sequences. Leveraging these regularity results, \Cref{sec:foopt} derives the primal and dual first-order optimality conditions, explicitly characterizing the adjoint state of bounded variation, and outlines the theoretical prerequisites for algorithmic implementation. Finally, \Cref{sec:num} validates the theoretical framework through a numerical experiment in quantum control, and \Cref{subsec:Nemytskij} provides supporting technical lemmas.

\section{Random Control-Linear Systems} \label{sec:prel}

We commence with a brief study of random control-linear systems. Given a metric space $(\Theta, d)$ equipped with a Borel probability measure $\mu\in \mathcal{P}(\Theta)$, we consider an evolution horizon $[0,T]$ (with $T>0$) and a class of admissible controls $\U \subseteq L^q([0,T],\R^k)$, and we study the following family of control systems in $\R^n$ parametrized by $\theta\in\Theta$: 
\begin{equation}\label{eq:ens_ctrl_sys}
\begin{cases}
\dot x^\theta (t) =  F^\theta \big( \xth(t) \big)u(t)&
\mbox{for a.e. }t\in [0,T],\\
\xth(0) = x_0(\theta),
\end{cases}
\end{equation}
where $F^\theta=(F^\theta_1,\ldots,F^\theta_k)\colon \R^n\to \R^{n\times k}$ define the vector fields of the ensemble of control-linear systems, and $x_0\colon \Theta\to \R^n$ prescribes the Cauchy data for the evolutions.
Using the same notation as in \cite{Scag23}, we introduce $F\colon  \R^n \times \Theta \to\R^{n\times k}$ as:
% \begin{equation}\label{eq:def_fields_maps}
% F(x,\theta) = \big(F_1(x,\theta),\ldots,F_k(x,\theta) \big) \coloneqq \big( F^\theta_1(x),\ldots,F_2^\theta(x) \big)
% \end{equation}
\begin{equation}\label{eq:def_fields_maps}
F(x,\theta) = \big(F_1(x,\theta),\ldots,F_k(x,\theta) \big) \coloneqq \big( F^\theta_1(x),\ldots,F_k^\theta(x) \big)
\end{equation}
for every $\theta\in\Theta$ and $x\in\R^n$. We require $F$ to satisfy the assumption below.

\begin{assumption}[Properties of the Vector Field $F$] \label{ass:fields}
The mapping $F=(F_1,\ldots,F_k)$ defined in \eqref{eq:def_fields_maps} is Borel measurable as functions of $(x,\theta)$ and is globally Lipschitz continuous in the first argument, i.e.,  there exists $L>0$ such that 
\begin{equation*}
\sup_{i=1,\ldots,k} |F_i(x_1,\theta) - F_i(x_2,\theta)| \leq 
L |x_1-x_2|
\end{equation*}
for every $x_1,x_2 \in \R^n$ and for $\mu$-a.e.~$\theta\in \Theta$. Moreover, we require that there exists $C>0$ such that
\begin{equation*}
\sup_{i=1,\ldots,k} |F_i(1,\theta)| \leq C
\end{equation*}
for $\mu$-a.e.~$\theta\in \Theta$.
\end{assumption}

\begin{remark}[Finite-Dimensional Noise]
We observe that \cref{ass:fields} holds if, for example, for every  $i=1,\ldots,k$ the map $F_i\colon \R^n \times \Theta \to \R^n$ can be decomposed as follows:
\begin{equation*}
 F_i(x,\theta) = \tilde F_i (x, \phi(\theta) ),
\end{equation*}
where $\phi\colon \Theta \to \R^l$ is Borel measurable, $\mu\{\theta : |\phi(\theta)|\leq r\}=1$ for some $r>0$, and $\tilde F_i \colon \R^n \times \R^l \to \R^n$ is continuous and satisfies
\begin{equation*}
| \tilde F_i(x_1,z)- \tilde F_i(x_2,z)| \leq \tilde L(z) |x_1-x_2|, \qquad |\tilde F_i(0,z)| \leq \tilde C(z)
\end{equation*}
for every $x_1,x_2 \in \R^n$ and for every $z \in \R^l$,
with $\tilde L, \tilde C \colon \R^l \to (0,+\infty)$ continuous.
\end{remark}

\begin{remark}[Missing Drift Term in \eqref{eq:ens_ctrl_sys}] \label{rmk:control-affine}
    We observe that the control-affine dynamics
    \begin{equation*}
        \dot x^\theta (t) = F^\theta_0 \big( \xth(t) \big) + F^\theta \big( \xth(t) \big)u(t)
    \end{equation*}
    are covered by the framework of \cref{eq:ens_ctrl_sys} if we set $\bar F \coloneqq (F_0,F)\colon \R^n \times \Theta \to\R^{n\times (k+1)}$, $\bar u \coloneqq (1,u)$ for every $u\in \U \subseteq L^q([0,T],\R^k)$, and $\bar \U \coloneqq \{ \bar u\in L^q([0,T],\R^{k+1}) \mid \exists u\in \U : \bar u= (1,u) \}$.
\end{remark}

We also make the following hypothesis on the function $\theta \mapsto x_0(\theta)$.
\begin{assumption}[Integrability of the Initial Value Data]\label{ass:cauchy}
The function $x_0\colon \Theta\to\R^n$ prescribing the initial conditions for the ensemble \eqref{eq:ens_ctrl_sys} is in $L_\mu^{p_0}(\Theta, \R^n)$, with $1<p_0<\infty$.
\end{assumption}

For $\mu$-a.e.~$\theta\in\Theta$, we can write, for every $u\in L^1([0,T],\R^k)$, the curve $x_u^\theta \colon [0,T]\to\R^n$ to denote the solution of the Cauchy problem \eqref{eq:ens_ctrl_sys} corresponding to the system identified by $\theta$ and driven by the control $u$. The existence of such a trajectory follows from the classical Carath\'eodory Theorem (see \cite[Theorem~5.3]{H80}).
We consider $\U \subseteq L^q([0,T],\R^k)$ with $1<q<\infty$ as the space of admissible controls. 
We require $\U$ to be \emph{weakly closed} in $L^q([0,T],\R^k)$, the latter being equipped with the usual Banach space structure.
%Given $u\in\U$, we describe the evolution of the ensemble of control systems \eqref{eq:ens_ctrl_sys} through the mapping $X_u\colon [0,T]\times\Theta\to\R^n$ defined as follows:
%\begin{equation} \label{eq:def_evol_ens}
%X_u(t,\theta) \coloneqq x_u^\theta(t)
%\end{equation}
%for every $t\in[0,T]$ and for $\mu$-a.e.~$\theta\in \Theta$.
Before proceeding, we show that we can lift \eqref{eq:ens_ctrl_sys} to an ODE in the Banach space $L^{p_0}_\mu(\Theta,\R^n)$, which admits a unique solution.
For every $X \in L^{p_0}_\mu(\Theta,\R^n)$ and for every $u\in \U$, we define the mapping $\F_u\colon [0,T]\times L^{p_0}_\mu(\Theta,\R^n) \to L^{p_0}_\mu(\Theta,\R^n)$ as follows:
\begin{equation} \label{eq:def_field_Banach}
    \F_u(t,X)[\theta] \coloneqq 
    \sum_{i=1}^k u_i(t) F_i\big( X(\theta), \theta \big) 
\end{equation}
for a.e.~$t\in[0,T]$ and for $\mu$-a.e.~$\theta\in\Theta$.

\begin{proposition}[Banach Space-Valued Dynamical Systems] \label{prop:existence_ODE_Banach}
    Let \cref{ass:fields} hold. 
    Then, for every $u\in \U \subset L^q([0,T],\R^k)$, the function $\F_u\colon [0,T]\times L^{p_0}_\mu(\Theta,\R^n) \to L^{p_0}_\mu(\Theta,\R^n)$ defined as in \eqref{eq:def_field_Banach} is $L^q$-Carath\'eodory and $L^q$-Lipschitz (see \cref{def:carath}).
    Moreover, under \cref{ass:fields,ass:cauchy}, the Cauchy problem 
    \begin{equation} \label{eq:diff_eq_X}
        \begin{cases}
            \dot X_t^u = \F_u(t,X_t^u) & \mbox{for a.e. }t\in [0,T],\\
            X_0^u = x_0
        \end{cases}
    \end{equation}
    admits a unique solution $X^u\in W^{1,p}\big( [0,T], L^{p_0}_\mu(\Theta,\R^n) \big)$. 
\end{proposition}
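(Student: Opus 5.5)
We first check that $\F_u$ is well defined as a map into $L^{p_0}_\mu(\Theta,\R^n)$, then verify the two properties of \cref{def:carath}, and finally invoke a Carath\'eodory-type existence and uniqueness theorem for ODEs in Banach spaces. For well-posedness, \cref{ass:fields} gives for $\mu$-a.e.\ $\theta$ and every $x\in\R^n$ the affine growth bound
\begin{equation*}
|F_i(x,\theta)| \le |F_i(1,\theta)| + L|x-1| \le C' + L|x|,
\end{equation*}
with $C'$ depending only on $C$, $L$ and $n$ (we read $1$ as a fixed reference point of $\R^n$). Hence, for $X\in L^{p_0}_\mu$, the map $\theta\mapsto F_i(X(\theta),\theta)$ is Borel measurable, as a composition of Borel maps, and it satisfies $\|F_i(X(\cdot),\cdot)\|_{L^{p_0}_\mu}\le C'+L\|X\|_{L^{p_0}_\mu}$, using that $\mu$ is a probability measure. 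Multiplying by $u_i(t)$ and summing over $i$ shows $\F_u(t,X)\in L^{p_0}_\mu$ for a.e.\ $t$.

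For the Carath\'eodory and Lipschitz properties, fix $X$. The map $t\mapsto \F_u(t,X)=\sum_i u_i(t)G_i$, with the fixed vectors $G_i\coloneqq F_i(X(\cdot),\cdot)\in L^{p_0}_\mu$, is strongly measurable: it is a finite sum of scalar measurable functions times fixed vectors. For fixed $t$, the global Lipschitz bound applied pointwise in $\theta$ and then integrated gives
\begin{equation*}
\|\F_u(t,X)-\F_u(t,Y)\|_{L^{p_0}_\mu}\le L\,\Big(\sum_{i=1}^k|u_i(t)|\Big)\|X-Y\|_{L^{p_0}_\mu}.
\end{equation*}
This yields continuity in $X$ together with a Lipschitz modulus $\ell(t)=L\sum_i|u_i(t)|\in L^q([0,T])$. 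The growth bound $\|\F_u(t,X)\|\le \ell'(t)\,(1+\|X\|)$, with $\ell'\in L^q$, provides the integrable majorant on bounded sets. Together these give the $L^q$-Carath\'eodory and $L^q$-Lipschitz properties, in whatever exact form \cref{def:carath} prescribes.

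For existence and uniqueness, we apply Picard iteration, equivalently the Banach fixed point theorem, to the integral equation
\begin{equation*}
X_t=x_0+\int_0^t\F_u(s,X_s)\,\diff s
\end{equation*}
in $C([0,T],L^{p_0}_\mu)$. Since $\ell\in L^1$, we use the weighted norm $\sup_t e^{-\lambda\int_0^t\ell}\|X_t\|$ with $\lambda>1$, which makes the integral operator a contraction on the whole interval. Alternatively, we split $[0,T]$ into finitely many pieces on which $\int\ell<1/2$. Either route gives a unique continuous solution, with uniqueness holding in the class of absolutely continuous solutions through Gr\"onwall's inequality. The integrals here are Bochner integrals, which are well defined because $s\mapsto \F_u(s,X_s)$ is strongly measurable and dominated by $\ell'(s)(1+\sup\|X\|)$.

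For regularity, the derivative satisfies $\|\dot X_t\|\le \ell'(t)(1+\sup_s\|X_s\|)$ with $\ell'\in L^q$, so $X\in W^{1,q}([0,T],L^{p_0}_\mu)$, and presumably $p=q$ in the statement. \textbf{The main obstacle} is the measurability of $s\mapsto \F_u(s,X_s)$ when $X$ is only continuous in time. We handle it by approximating $X$ uniformly with piecewise constant functions and using the Lipschitz estimate, so that the composition is a limit of strongly measurable functions.
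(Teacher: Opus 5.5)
Your proof is correct. The verification of the $L^q$-Carath\'eodory and $L^q$-Lipschitz properties is essentially the paper's. You bound the difference pointwise in $\theta$ using the Lipschitz constant and integrate in $L^{p_0}_\mu$. You get measurability of $t\mapsto\F_u(t,X)$ by writing it as a finite sum of scalar measurable functions times fixed elements $F_i(X(\cdot),\cdot)\in L^{p_0}_\mu$. You obtain the majorant $h_c$ from an affine growth bound. Your growth bound via the reference point in \cref{ass:fields} is in fact slightly more careful than the paper's, which tacitly uses $|F_i(x,\theta)|\le C+L|x|$, i.e., reads the normalization as being at the origin.

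Where you depart is the existence part. The paper simply invokes the abstract Carath\'eodory--Lipschitz theorem for Banach-space ODEs (\cref{thm:existence_ODE}, taken from O'Regan). You instead prove it by hand, using three ingredients:
\begin{itemize}
\item a Bielecki-weighted contraction on $C^0([0,T],L^{p_0}_\mu(\Theta,\R^n))$;
\item strong measurability of $s\mapsto\F_u(s,X_s)$ along continuous curves, via step-function approximation and the Lipschitz estimate;
\item Bochner integrability from the growth bound.
\end{itemize}

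The paper's route is shorter and delegates these technical points to the reference. Yours is self-contained and has two further benefits. It shows that integrability of $\ell$ and of the majorant in $L^1$ already suffices for existence and uniqueness, with the $L^q$ bound used only for the Sobolev regularity. It also correctly identifies that the exponent in $W^{1,p}$ should be $q$, which is consistent with how the paper uses $X^u$ later.
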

We occasionally refer to $X^u$ as the \textit{ensemble trajectory}.
\begin{proof}
    % The fact that $\mathcal{F}_u$ is $L^q$-Carath\'eodory is a direct consequence of its definition and the Lipschitz and boundedness assumptions on the individual vector fields $F_i$.
    We first address the Lipschitz continuity of $\F_u$ in the second argument.
    Owing to \cref{ass:fields}, for every $i=1,\ldots,k$ we have that
    \begin{equation*}
        \left| F_i\big( X_1(\theta), \theta \big) - F_i\big( X_2(\theta), \theta \big) \right| \leq L \left| X_1(\theta) -  X_2(\theta) \right|
    \end{equation*}
    for $\mu$-a.e.~$\theta\in\Theta$, yielding
    \begin{equation*}
        \left\| \F_u\big( t, X_1 \big) - \F_u\big( t, X_2 \big) \right\|_{L^{p_0}_\mu} \leq L( 1 + |u(t)|)\| X_1 - X_2 \|_{L^{p_0}_\mu}
    \end{equation*}
    for a.e.~$t\in [0,T]$. This shows that $X\mapsto \F_u(t,X)$ is $L^q$-Lipschitz continuous at a.e.~time. \\
    Regarding measurability, fix $X\in L^{p_0}_\mu(\Theta,\R^n)$, and consider the mapping
    \begin{equation*}
        t\mapsto \F_u(t,X)[\cdot] = \sum_{i=1}^k u_i(t) F_i\big( X(\cdot), \cdot \big) .
    \end{equation*}
    The time measurability follows as soon as we show that $\theta \mapsto F_i \big(X( \theta), \theta \big) \in L^{p_0}_\mu(\Theta,\R^n)$ for every $i=1,\ldots,k$.
    Recalling that $F_1,\ldots,F_k\colon \R^n\times\Theta\to\R^n$ are continuous in the first argument and Borel measurable in $\theta$, it turns out that $\theta\mapsto F_i \big(X( \theta), \theta \big)$ is measurable, for every $i=0,\ldots,k$. Hence, using the boundedness at the origin provided by \cref{ass:fields}, we deduce that
    \begin{equation*}
        \left\| F_i\big( X(\cdot), \cdot \big) \right\|_{L_\mu^{p_0}} \leq C + L \| X  \|_{L_\mu^{p_0}}
    \end{equation*}
    for every $i=1,\ldots,k$.
    Finally, for every $X\in L^{p_0}_\mu(\Theta,\R^n)$, leveraging on the last estimate, we obtain that
    \begin{equation*}
        \left\| \F_u ( t, X ) \right\|_{L_\mu^{p_0}} \leq \left( C + L\| X  \|_{L_\mu^{p_0}} \right) |u(t)|
    \end{equation*}
    for a.e.~$t\in [0,T]$, and we conclude that $\F_u$ is $L^q$-Carath\'eodory. \\
    The second part of the statement follows directly from \cref{thm:existence_ODE}
\end{proof}

\begin{remark}[Absolutely Continuous Trajectories]
    In this paper, we shall always consider the absolutely continuous representative $X^u\in AC\big( [0,T], L^{p_0}_\mu(\Theta,\R^n) \big)$ of the solution of \eqref{eq:diff_eq_X}.
\end{remark}

Since for $\mu$-a.e.~$\theta\in\Theta$ we can consider, for every $u\in L^q([0,T],\R^k)$, the curve $x_u^\theta \colon [0,T]\to\R^n$ that solves the Cauchy problem \eqref{eq:ens_ctrl_sys}, by virtue of \cref{prop:existence_ODE_Banach} we deduce that
\begin{equation} \label{eq:pointwise_X}
    X_t^u(\theta) = x_u^\theta(t)
\end{equation}
for every $t\in [0,T]$ and for $\mu$-a.e.~$\theta\in\Theta$.

With an argument based on the Gr\"onwall Lemma, we can prove the following result.

\begin{lemma}[Growth Condition on Path Trajectories] \label{lem:bound_traj}
Let \cref{ass:fields,ass:cauchy} hold. Then, for $\mu$-a.e.~$\theta \in \Theta$ and for every $u\in L^q([0,T],\R^k)$
we have 
\begin{equation} \label{eq:bound_X}
\sup_{t\in [0,T]} \big| x^\theta_u(t) \big| \leq    \left( | x_0(\theta) | + C \| u \|_{L^1} \right) e^{L \| u \|_{L^1} }.
\end{equation}
\end{lemma}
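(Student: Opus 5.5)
Fix $\theta$ outside the $\mu$-null set where \cref{ass:fields} may fail. Fix $u\in L^q([0,T],\R^k)\subset L^1([0,T],\R^k)$. The plan is a direct Gr\"onwall argument on the Carath\'eodory solution $x^\theta_u$, which is absolutely continuous and satisfies the integral form of \eqref{eq:ens_ctrl_sys}:
\begin{equation*}
x^\theta_u(t) = x_0(\theta) + \int_0^t \sum_{i=1}^k u_i(s) F_i\big(x^\theta_u(s),\theta\big)\,\diff s .
\end{equation*}

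The first step is a linear growth bound on the fields. The Lipschitz constant $L$ together with the bound $C$ at the reference point gives $|F_i(x,\theta)|\le C + L|x|$ for every $i$ and every $x$. The assumption is written with $F_i(1,\theta)$; I read it as the origin, as the proof of \cref{prop:existence_ODE_Banach} does. If a different reference point were meant, its distance from $0$ would only be absorbed into the constant $C$.

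For the control term I use $|u(s)|$ with the norm for which $\sum_i |u_i(s)|\,|F_i| \le |u(s)|\max_i|F_i|$. I interpret $\|u\|_{L^1}=\int_0^T|u(s)|\,\diff s$ consistently with this choice, as in the estimate $\|\F_u(t,X)\|\le (C+L\|X\|)|u(t)|$ of \cref{prop:existence_ODE_Banach}. This gives
\begin{equation*}
|x^\theta_u(t)| \le |x_0(\theta)| + C\int_0^t |u(s)|\,\diff s + \int_0^t L|u(s)|\,|x^\theta_u(s)|\,\diff s .
\end{equation*}

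Next I apply the integral Gr\"onwall inequality with the nonnegative integrable weight $L|u(\cdot)|$. The function $\phi(t)=|x^\theta_u(t)|$ is continuous and bounded on $[0,T]$. The forcing term $\alpha(t)=|x_0(\theta)|+C\int_0^t|u|$ is nondecreasing. So Gr\"onwall yields
\begin{equation*}
\phi(t)\le \alpha(t)\exp\Big(L\int_0^t|u(s)|\,\diff s\Big).
\end{equation*}
Bounding $\alpha(t)\le |x_0(\theta)|+C\|u\|_{L^1}$ and the exponent by $L\|u\|_{L^1}$, and then taking the supremum over $t\in[0,T]$, gives \eqref{eq:bound_X}.

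The only delicate point is justifying Gr\"onwall for a measurable, possibly unbounded weight $|u|\in L^1$. I would handle it with the standard trick. Set $R(t)=\int_0^t L|u(s)|\phi(s)\,\diff s$, which is absolutely continuous. It satisfies $\dot R\le L|u|(\alpha+R)$ almost everywhere. Multiplying by $\exp(-L\int_0^t|u|)$, integrating, and using that $\alpha$ is monotone gives the claimed bound. Since only the sup over $t$ is needed, one can also just use the constant $\alpha(T)$ from the start, which makes this step immediate.
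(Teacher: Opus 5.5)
Your proof is correct and follows the same route as the paper, which gives no details and defers to the classical Gr\"onwall argument of \cite[Lemma~A.2]{Scag23}: the linear growth bound $|F_i(x,\theta)|\le C+L|x|$, the integral inequality with the integrable weight $L|u(\cdot)|$, and Gr\"onwall with a nondecreasing forcing term. Your reading of $F_i(1,\theta)$ as a typo for $F_i(0,\theta)$ matches the paper's own use of ``boundedness at the origin'' in the proof of Proposition~\ref{prop:existence_ODE_Banach}. Your choice of a pointwise norm $|u(s)|$ with $\sum_i|u_i(s)|\,|F_i|\le|u(s)|\max_i|F_i|$ is likewise consistent with how that proof uses these constants.
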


\begin{proof}
The proof is classical and makes use of Gr\"onwall's lemma; see \cite[Lemma~A.2]{Scag23}.
\end{proof}

The next convergence result plays a pivotal role in several passages of the paper.

\begin{proposition}[Complete Continuity of Ensemble Trajectories] \label{prop:unif_conv_map_X}
Let \cref{ass:fields,ass:cauchy} hold.
For $\mu$-a.e.~$\theta \in \Theta$ we have the following: 
For every sequence of controls $(u_m)_{m\in\NN} \subset \U$ such that $u_m\weak_{L^q}u_\infty$ as
$m\to\infty$, it holds that
\begin{equation} \label{eq:unif_conv_map_X}
\lim_{m\to\infty}\, \sup_{t\in [0,T]} |x_m^\theta(t) - x_\infty^\theta(t)|
=0 
\end{equation}
where $x_m^\theta\colon [0,T]\to \R^n$ denotes the solution of \eqref{eq:ens_ctrl_sys} driven by $u_m$, for every $m\in \Next$.\\
Moreover, we have that
\begin{equation*}
    \lim_{m\to \infty} \ \sup_{t\in [0,T]}\left \| X^{u_m}_t -X^{u_\infty}_t \right\|_{L^{p_0}_\mu} =0,
\end{equation*}
i.e., the mapping $u \mapsto X^u \in C^0\big( [0,T], L^{p_0}_\mu(\Theta,\R^n) \big)$ is weak-to-strong continuous.
\end{proposition}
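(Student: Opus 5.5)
We prove the two claims in order. First we fix $\theta$ and show uniform convergence of the single trajectories $x_m^\theta \to x_\infty^\theta$. Then we integrate in $\theta$ by dominated convergence to get the ensemble statement.

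\emph{Step 1: pointwise-in-$\theta$ convergence.} Fix $\theta$ outside the $\mu$-null set where \cref{ass:fields} fails or $x_0(\theta)$ is undefined. A weakly convergent sequence is bounded, so $\sup_m \|u_m\|_{L^q}\le M$, and hence $\|u_m\|_{L^1}\le T^{1-1/q}M$. By \cref{lem:bound_traj}, the trajectories $x_m^\theta$ all lie in a ball $B_R$ with $R=R(\theta)$ independent of $m$. On $B_R$ each field satisfies $|F_i(x,\theta)|\le C+LR$ (up to the harmless normalisation point in \cref{ass:fields}). Therefore
\[
|x_m^\theta(t)-x_m^\theta(s)|\le (C+LR)\int_s^t|u_m|\le (C+LR)M|t-s|^{1-1/q}.
\]
So the family is equi-Hölder and bounded, and Ascoli--Arzelà gives a uniformly convergent subsequence $x_{m_j}^\theta\to y$.

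We identify the limit by passing to the limit in
\[
x_m^\theta(t)=x_0(\theta)+\int_0^t F(x_m^\theta(s),\theta)u_m(s)\,\diff s .
\]
Split the integrand as
\[
\big[F(x_m^\theta,\theta)-F(y,\theta)\big]u_m+F(y,\theta)u_m .
\]
The first term is bounded by $L\|x_m^\theta-y\|_\infty\|u_m\|_{L^1}\to0$. For the second, $F(y(\cdot),\theta)\mathbf 1_{[0,t]}$ is continuous and bounded, hence lies in $L^{q'}$, so weak convergence of $u_m$ gives convergence of the integral. Thus $y$ solves the Cauchy problem driven by $u_\infty$. By uniqueness (global Lipschitz Carathéodory), $y=x_\infty^\theta$. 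Since every subsequence has a further subsequence with the same limit, the whole sequence converges uniformly, which is \eqref{eq:unif_conv_map_X}.

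The step needing care is quantifier order: the null set must not depend on the sequence. It does not, because it is determined only by \cref{ass:fields} and \cref{ass:cauchy}.

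\emph{Step 2: the ensemble statement.} By \eqref{eq:pointwise_X} we have
\[
\sup_t\|X^{u_m}_t-X^{u_\infty}_t\|_{L^{p_0}_\mu}^{p_0}\le \int_\Theta \sup_t|x_m^\theta(t)-x_\infty^\theta(t)|^{p_0}\,\diff\mu(\theta).
\]
The integrand tends to $0$ for $\mu$-a.e.\ $\theta$ by Step 1. By \cref{lem:bound_traj} with the uniform $L^1$ bound on the controls, it is dominated by
\[
2^{p_0}\big((|x_0(\theta)|+CT^{1-1/q}M)e^{LT^{1-1/q}M}\big)^{p_0},
\]
which is $\mu$-integrable by \cref{ass:cauchy}. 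Dominated convergence concludes.

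We also need the pointwise supremum to be measurable in $\theta$. This holds because the trajectories are continuous in $t$, so the supremum can be taken over rational times.

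The main obstacle is the identification of the limit in Step 1. This is exactly where the linear-in-$u$ structure is essential: it turns a product of a strongly and a weakly convergent sequence into a vanishing term plus a term that converges by duality.
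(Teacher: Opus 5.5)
Your proof is correct and follows the paper's route. Step~2 is exactly the paper's dominated-convergence argument: you bound $\sup_t\|X^{u_m}_t-X^{u_\infty}_t\|_{L^{p_0}_\mu}$ by the $L^{p_0}_\mu$-norm of the pointwise supremum and dominate it via \cref{lem:bound_traj}. For Step~1 the paper only cites \cite[Lemma~7.1]{Scag_GF}, and you carry out its compactness-plus-identification argument in full, in the same Arzel\`a--Ascoli and sub-subsequence style the paper uses in \cref{lem:conv_fund_mat}. The one point to state explicitly is that your bound $M$ also covers $u_\infty$, by weak lower semicontinuity of the norm, since your domination in Step~2 relies on this.
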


\begin{proof}
For the first part of the statement, see \cite[Lemma~7.1]{Scag_GF} for $\U= L^2$. The general case $\U\subset L^q$ with $1<q<\infty$ is analogous. 
For the second part, we observe that
\begin{equation*}
    \sup_{t\in [0,T]}\left \| X^{u_m}_t -X^{u_\infty}_t \right\|_{L^{p_0}_\mu}
    \leq \left\| {\textstyle\sup_{t\in [0,T]}} \, |X^{u_m}_t -X^{u_\infty}_t |\right\|_{L^{p_0}_\mu} =
    \left\| {\textstyle\sup_{t\in [0,T]}} \, |x_{u_m}^\theta(t) -x_{u_\infty}^\theta |\right\|_{L^{p_0}_\mu}.
\end{equation*}
As $\theta \mapsto {\textstyle\sup_{t\in [0,T]}} \, |x_{u_m}^\theta(t) -x_{u_\infty}^\theta |$ converges pointwise to zero by \cref{eq:unif_conv_map_X} and it is dominated by an $L^1$-function owing to \cref{lem:bound_traj}, the assertion follows from the Lebesgue dominated convergence theorem (DCT).
\end{proof}

Although the proofs provided in \cite{Scag23} require the space of admissible controls $\U$ to be $L^2$, those arguments do not make use of the Hilbert space structure of $\U$ and can be \textit{verbatim} generalized to the case of a weakly closed subset $\U \subseteq L^q$ with $1<p<\infty$. 
Indeed, the crucial aspects lie in the reflexivity of $L^q$ and of $W^{1,q}([0,T],\R^n)$ (the natural space for the trajectories solving \eqref{eq:ens_ctrl_sys}), along with the fact that $W^{1,q}([0,T],\R^n)$ is compactly embedded in $C^{0,\alpha}([0,T],\R^n)$ with $\alpha \in  (0,1)$.

\section{Admissible Integrands and Cost Functionals}\label{sec:cost-func}

We now introduce the cost functionals for our optimal control problems. Due to the random nature of the forward problems, the cost functionals must themselves be viewed as a random fields or special classes of parametric superposition operators. As we strive for a certain degree of generality in the presentation, we will require a number of natural data assumptions, which we state throughout this section as needed.

For $u\in \U$ and for a fixed $\theta\in \Theta$, we shall consider a trajectory-cost of the form:
\begin{equation*} 
    \J_u(\theta) = \int_{[0,T]}
    a\big( t, x_u^\theta(t), \theta \big)
    \,\diff \nu(t),
\end{equation*}
where $\nu \in \mathcal{M}([0,T])$ is a finite positive Radon measure on $[0,T]$. We aim to treat in a general and unified way costs of the form
\begin{equation*} 
    \J_u(\theta) = \int_{[0,T]}
    a^{\mathrm{AC}}\big( t, x_u^\theta(t), \theta \big)
    \,\diff t  + \sum_{k=1}^{N_{\mathrm{J}}} a^{\mathrm{J}}_k \big( x_u^\theta(t_k), \theta \big),
\end{equation*}
where $0\leq t_1 <\ldots < t_{N_{\mathrm{J}}}\leq T $ can be seen as ``check-points''. In particular, given any target trajectory $t\mapsto \tilde x(t)$, our framework encompasses the following classical case of quadratic tracking and terminal-state cost:
\begin{equation*} 
    \J_u(\theta) = \int_{[0,T]}
     | x_u^\theta(t) - \tilde x(t)|^2
    \,\diff t  +  a^{T} \big( x_u^\theta(T), \theta \big).
\end{equation*}

We insist on the fact that, in its ultimate formulation, the ensemble optimal control problem does not explicitly depend on the parameter $\theta$, and, as a matter of fact, its solutions---i.e., the optimal controls---are deterministic (independent of $\theta$).

\begin{assumption}[Admissible Integrands $a(t,x,\theta)$] \label{ass:risk_ingred}
    We denote with $\nu \in \mathcal{M}^+([0,T])$ a finite positive Radon measure on $[0,T]$. 
    Assume that $p_0>4$ (see \cref{ass:cauchy}) and let $p_1$ be such that $p_1\in (2, p_0/2]$.\footnote{These strict bounds on the Lebesgue exponents are essential to absorb the loss of integrability incurred when computing the Fréchet differential of the control-to-state mapping, as will be rigorously detailed in \Cref{sec:diff-tracking}. }
    We introduce the function $a\colon [0,T]\times \R^n\times \Theta\to \R$ that satisfies the following:
    \begin{itemize}
        \item (Bounded from Below) There exists $c_a\in \R$ such that $a(t,x,\theta)\geq c_a$ for $\mu$-a.e.~$\theta \in \Theta$ and for every $(t,x)\in [0,T]\times \R^n$;
        \item (Carath\'eodory Property I) For $\mu$-a.e.~$\theta\in \Theta$ we have that $(t,x)\mapsto a(t,x,\theta)$ is continuous, and for every $(t,x)\in [0,T]\times \R^n$ the map $\theta \mapsto a(t,x,\theta)$ is measurable;
        \item (Carath\'eodory Property II) For $\mu$-a.e.~$\theta\in \Theta$ we have that the derivative $(t,x)\mapsto D_x a(t,x,\theta)$ is continuous, and for every $(t,x)\in [0,T]\times \R^n$ the map $\theta \mapsto D_x a(t,x,\theta)$ is measurable;
        \item (Local Lipschitz Continuity) There exists a constant $C^{\mathrm{Lip}}>0$ and a non-negative function $g^{\mathrm{Lip}}\in L^{p_1'}_\mu(\Theta, \R)$ with $p_1'=p_1/(p_1-1)$ such that
        \begin{equation} \label{eq:lip_a}
            |a(t,x,\theta) - a(t,y,\theta)| \leq \big( L(|x|, \theta) + L(|y|, \theta) \big)|x-y|
        \end{equation}
        for $\mu$-a.e.~$\theta\in \Theta$ and for every $(t,x)\in [0,T] \times \R^n$, where $L(r,\theta)\coloneqq C^{\mathrm{Lip}} r^{p_1-1} + g^{\mathrm{Lip}}(\theta)$;
    \item The function defined for $\mu$-a.e.~$\theta\in \Theta$ as
    \begin{equation} \label{eq:bound_a_0}
        \bar g_a(\theta) \coloneqq \sup_{t\in [0,T]} |a(t,0,\theta)|
    \end{equation}
    belongs to $L^1_\mu(\Theta,\R)$;
    \item (Local $C^{1,1}$ Property) There exists a constant $C^{\mathrm{Lip}}_{\mathrm{der}}>0$ and a non-negative function $g^{\mathrm{Lip}}_{\mathrm{der}}\in L^{\frac{p_1}{p_1-2}}_\mu(\Theta, \R)$ such that
        \begin{equation} \label{eq:lip_a_der}
            \left| D_x a(t,x,\theta) - D_x a(t,y,\theta) \right| \leq \big( L_{\mathrm{der}}(|x|, \theta) + L_{\mathrm{der}}(|y|, \theta) \big)|x-y|
        \end{equation}
        for $\mu$-a.e.~$\theta\in \Theta$ and for every $(t,x)\in [0,T] \times \R^n$, where $L_{\mathrm{der}}(r,\theta)\coloneqq C^{\mathrm{Lip}}_{\mathrm{der}} r^{p_1-2} + g^{\mathrm{Lip}}_{\mathrm{der}}(\theta)$;
    \item The function defined for $\mu$-a.e.~$\theta\in \Theta$ as
    \begin{equation} \label{eq:bound_der_a_0}
        \bar g_{D_x a}(\theta) \coloneqq \sup_{t\in [0,T]} |D_x a(t,0,\theta)|
    \end{equation}
    belongs to $L^{p_1'}_\mu(\Theta,\R)$, with $p_1'=p_1/(p_1-1)$.
    \end{itemize}
\end{assumption}

\begin{remark} \label{rmk:bound_a}
    We observe that \cref{ass:risk_ingred} yields a bound on the growth of the function $a$. Namely, by combining \cref{eq:lip_a,eq:bound_a_0}, we deduce that
    \begin{equation} \label{eq:bound_a}
        |a(t,x,\theta)| \leq C^{\mathrm{Lip}}|x|^{p_1} + 2 g^{\mathrm{Lip}}(\theta)|x|+ \bar g_a(\theta)
    \end{equation}
    for $\mu$-a.e.~$\theta\in \Theta$ and for every $(t,x)\in [0,T] \times \R^n$. Moreover, from the continuity in the second variable, it follows that $L^{p_1}_\mu(\Theta,\R) \ni \xi(\cdot) \mapsto a\big( t,\xi(\cdot), \cdot \big) \in L^1_\mu (\Theta,\R)$ is continuous for every $t\in [0,T]$, see, e.g., \cite[Theorem~2.2]{ambrosetti1995primer}.
\end{remark}
The admissible integrands allow us to define the Nemytskij operator $A\colon L^\infty\big([0,T], L^{p_1}_\mu(\Theta,\R^n) \big) \to L^\infty\big([0,T], L^{1}_\mu(\Theta,\R) \big)$ given by:
\begin{equation} \label{eq:def_nemytskij_A}
    \begin{split}
     L^\infty\big([0,T], L^{p_1}_\mu(\Theta,\R^n) \big) \ni   Z \mapsto &A(Z) \in L^\infty\big([0,T], L^{1}_\mu(\Theta,\R) \big), \\
     &A(Z)_t(\cdot) \coloneqq
     a(t, Z_t(\cdot), \cdot) \in L^{1}_\mu(\Theta,\R).
    \end{split}    
\end{equation}
The continuity and differentiability properties of the operator $A$ can be deduced from the results contained in \cite{goldberg1992nemytskij}. However, for the reader's convenience, we provide the proofs within our framework in \cref{subsec:Nemytskij}.\\
For notational convenience, we denote by $\mathcal Y \coloneqq \mathcal L \left( L_\mu^{p_1}(\Theta,\R^n); L_\mu^{1}(\Theta,\R) \right)$ the Banach space of the linear continuous operators from $L_\mu^{p_1}(\Theta,\R^n)$ to $L_\mu^{1}(\Theta,\R)$. Similarly we define the Nemytskij operator $D_x A\colon L^\infty\big([0,T], L^{p_1}_\mu(\Theta,\R^n) \big) \to L^\infty\big([0,T], \mathcal Y)$ as follows:
\begin{equation} \label{eq:def_nemytskij_der_A}
    \begin{split}
     L^\infty\big([0,T], L^{p_1}_\mu(\Theta,\R^n) \big) \ni   Z \mapsto &D_x A(Z) \in L^\infty\big([0,T], \mathcal Y \big), \\
     L_\mu^{p_1}(\Theta,\R^n) \ni \zeta\mapsto &D_x A(Z)_t[\zeta] \coloneqq  D_x a\big(t,Z_t(\cdot),\cdot \big)  \zeta(\cdot)  \in L^1_{\mu}(\Theta,\R), \\
     &D_x A(Z)_t(\cdot) \coloneqq
     D_x a(t, Z_t(\cdot), \cdot) \in L^{p_1'}_\mu(\Theta,\R),
    \end{split}    
\end{equation}
where we recall that $p_1'=p_1/(p_1-1)$ is the conjugate exponent of $p_1$. We prove below that this is in fact the Fr\'echet derivative of A under \Cref{ass:risk_ingred}.
\begin{remark} \label{rmk:cont_diff_a}
    We observe that, for every $\xi \in L_\mu^{p_1}(\Theta, \R^n)$ and for every $t\in [0,T]$, we have that $D_x a(t, \xi(\cdot), \cdot) \in L_\mu^{p_1'}(\Theta,\R^n)$. Indeed, owing to \cref{ass:risk_ingred}, by combining \cref{eq:lip_a_der,eq:bound_der_a_0}, it turns out that
    \begin{equation} \label{eq:bound_der_a}
        |D_x a(t,x,\theta)| \leq C^{\mathrm{Lip}}_{\mathrm{der}} |x|^{p_1-1} + 2 g^{\mathrm{Lip}}_{\mathrm{der}}(\theta)|x|+ \bar g_{D_x a}(\theta)
    \end{equation}
    for $\mu$-a.e.~$\theta\in \Theta$ and for every $(t,x)\in [0,T] \times \R^n$, yielding
    \begin{equation*}
        |D_x a(t,\xi(\theta),\theta)| \leq C^{\mathrm{Lip}}_{\mathrm{der}} |\xi(\theta)|^{p_1-1} + 2 g^{\mathrm{Lip}}_{\mathrm{der}}(\theta)|\xi(\theta)|+ \bar g_{D_x a}(\theta)
    \end{equation*}
    for $\mu$-a.e.~$\theta\in \Theta$, which shows that $D_x a(t, \xi(\cdot), \cdot) \in L_\mu^{p_1'}(\Theta,\R^n)$.
    Using classical results on the continuity of Nemytskij operators (see again \cite[Theorem~2.2]{ambrosetti1995primer}), we deduce that $L^{p_1}_\mu(\Theta,\R) \ni \xi(\cdot) \mapsto D_x a\big( t,\xi(\cdot), \cdot \big) \in L^{p_1'}_\mu (\Theta,\R)$ is continuous for every $t\in [0,T]$, which in turn implies (cf.~\cite[Theorem~1.9]{ambrosetti1995primer}) that $L^{p_1}_\mu(\Theta,\R) \ni \xi(\cdot) \mapsto a\big( t,\xi(\cdot), \cdot \big) \in L^1_\mu (\Theta,\R)$ is Fréchet-differentiable for every $t\in [0,T]$.
\end{remark}
We are now in a position to show that the Nemytskij operator $A$ defined in \cref{eq:def_nemytskij_A} is continuously Fréchet differentiable.

\begin{proposition}[Fr\'echet Differentiability of Admissible Integrands] \label{prop:diff_nemytskij_A}
    Let \cref{ass:risk_ingred} hold. Let $A\colon L^\infty\big([0,T], L^{p_1}_\mu(\Theta,\R^n) \big) \to L^\infty\big([0,T], L^{1}_\mu(\Theta,\R) \big)$ be the Nemytskij operator defined in \cref{eq:def_nemytskij_A}.
    Then, $A$ is continuously Fréchet differentiable, and, given $Z,\zeta \in L^\infty \big( [0,T], L_\mu^{p_1}(\Theta,\R^n) \big)$, the Fréchet derivative
    \begin{equation*}
        A'\colon L^\infty \big( [0,T], L_\mu^{p_1}(\Theta,\R^n) \big) \to \mathcal L \left( L^\infty \big( [0,T], L_\mu^{p_1}(\Theta,\R^n) \big); L^\infty \big( [0,T], L_\mu^{1}(\Theta,\R) \big)
        \right)
    \end{equation*}
    evaluated at the point $Z$ in the direction $\zeta$ is given by
    \begin{equation*}
        \left( A'(Z)[\zeta] \right)_t = D_x A(Z)_t [\zeta_t],
    \end{equation*}
    where the Nemytskij operator $D_x A\colon L^\infty\big([0,T], L^{p_1}_\mu(\Theta,\R^n) \big) \to L^\infty\big([0,T], \mathcal Y)$ is as in \cref{eq:def_nemytskij_der_A}.
\end{proposition}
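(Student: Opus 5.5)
The plan is to derive every property from pointwise-in-$\theta$ Taylor estimates, integrated against $\mu$ with H\"older's inequality. These bounds will be uniform in $t$, which settles all $L^\infty$-in-time questions at once. For fixed $Z\in L^\infty([0,T],L^{p_1}_\mu)$ put $R\coloneqq \operatorname{ess\,sup}_t\|Z_t\|_{L^{p_1}_\mu}$.

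\emph{Well-posedness of the candidate derivative.} I first check that $A(Z)$ and $D_xA(Z)$ are (strongly) measurable in $t$ and bounded. Boundedness comes from \eqref{eq:bound_a} and \eqref{eq:bound_der_a}. With H\"older, and since $g^{\mathrm{Lip}}_{\mathrm{der}}\in L^{p_1/(p_1-2)}_\mu$,
\[
\|D_x a(t,Z_t,\cdot)\|_{L^{p_1'}_\mu}\le C\big(\|Z_t\|_{L^{p_1}_\mu}^{p_1-1}+\|g^{\mathrm{Lip}}_{\mathrm{der}}\|_{L^{p_1/(p_1-2)}_\mu}\|Z_t\|_{L^{p_1}_\mu}+\|\bar g_{D_xa}\|_{L^{p_1'}_\mu}\big).
\]
This is uniform in $t$. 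Measurability reduces, by approximating $Z$ with simple functions, to continuity of $t\mapsto a(t,\xi,\cdot)$ into $L^1_\mu$ for fixed $\xi$, and of $t\mapsto D_xa(t,\xi,\cdot)$ into $L^{p_1'}_\mu$. Both follow from Carath\'eodory Properties I and II together with dominated convergence, using the dominating bounds above, and then from continuity of the Nemytskij maps in \cref{rmk:bound_a,rmk:cont_diff_a}. Consequently $\zeta\mapsto (D_xA(Z)_t[\zeta_t])_t$ is a bounded linear map with operator norm at most $\sup_t\|D_xa(t,Z_t,\cdot)\|_{L^{p_1'}_\mu}$.

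\emph{Fr\'echet differentiability.} For $\mu$-a.e.\ $\theta$ I use the mean value formula
\[
a(t,x+h,\theta)-a(t,x,\theta)-D_xa(t,x,\theta)h=\int_0^1\big(D_xa(t,x+sh,\theta)-D_xa(t,x,\theta)\big)h\,\diff s .
\]
By \eqref{eq:lip_a_der}, its absolute value is at most $\big(2L_{\mathrm{der}}(|x|+|h|,\theta)\big)|h|^2$, using that $L_{\mathrm{der}}$ is monotone in $r$ since $p_1>2$. Set $x=Z_t(\theta)$ and $h=\zeta_t(\theta)$ and integrate in $\theta$. H\"older with exponents $\frac{p_1}{p_1-2}$ and $\frac{p_1}{2}$ gives
\[
\big\|A(Z+\zeta)_t-A(Z)_t-D_xA(Z)_t[\zeta_t]\big\|_{L^1_\mu}\le C\big((R+\|\zeta\|)^{p_1-2}+\|g^{\mathrm{Lip}}_{\mathrm{der}}\|_{L^{p_1/(p_1-2)}_\mu}\big)\|\zeta_t\|_{L^{p_1}_\mu}^2 .
\]
Taking the essential supremum in $t$ yields a remainder of order $O(\|\zeta\|^2)$, which is even better than $o(\|\zeta\|)$. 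This is precisely where the exponent $\frac{p_1}{p_1-2}$ in \cref{ass:risk_ingred} is used.

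\emph{Continuity of $A'$.} I need $\sup_t\|D_xa(t,Z_t,\cdot)-D_xa(t,W_t,\cdot)\|_{L^{p_1'}_\mu}\to0$ as $W\to Z$. Again by \eqref{eq:lip_a_der}, the pointwise difference is bounded by $(L_{\mathrm{der}}(|Z_t|,\theta)+L_{\mathrm{der}}(|W_t|,\theta))|Z_t-W_t|$. H\"older with $\frac{1}{p_1'}=\frac{p_1-2}{p_1}+\frac1{p_1}$ then gives the bound $C\big(R^{p_1-2}+\|W\|^{p_1-2}+\|g^{\mathrm{Lip}}_{\mathrm{der}}\|\big)\|Z_t-W_t\|_{L^{p_1}_\mu}$. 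Hence $A'$ is even locally Lipschitz.

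I expect the only real obstacle to be the strong measurability in $t$ in the first step, since uniform-in-$t$ continuity is needed there. The quantitative estimates themselves are routine once the H\"older exponents are matched.
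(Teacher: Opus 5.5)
Your proof is correct, and it takes a different route from the paper's. The paper never estimates the remainder of $A$ on the Bochner space directly. First, it obtains Fr\'echet differentiability of $\xi\mapsto a(t,\xi(\cdot),\cdot)$ from $L^{p_1}_\mu$ to $L^1_\mu$ for each fixed $t$, using \cite[Theorem~1.9]{ambrosetti1995primer} and the continuity of the derivative Nemytskij map (\cref{rmk:cont_diff_a}). It then lifts this to $L^\infty([0,T],\cdot)$ by invoking the abstract result \cite[Theorem~7]{goldberg1992nemytskij}, whose hypotheses it checks one by one:
\begin{itemize}
\item Bochner measurability of $t\mapsto D_xa(t,\xi(\cdot),\cdot)\in\mathcal Y$, via continuity in $t$ and dominated convergence, as in your first step;
\item continuity of $\xi\mapsto D_xa(t,\xi(\cdot),\cdot)\in\mathcal Y$ for each $t$;
\item continuity of $D_xA$, which is \cref{lem:lipschitz_nemytskij_der_A} and coincides with your third step.
\end{itemize}
The key difference is how uniformity in $t$ is obtained, which is what turns pointwise-in-time differentiability into differentiability on $L^\infty$ in time. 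The paper delegates this to the abstract theorem. You get it directly: integrating the mean-value formula and applying H\"older with exponents $\frac{p_1}{p_1-2}$ and $\frac{p_1}{2}$ bounds the remainder by $C\big((R+\|\zeta\|)^{p_1-2}+\|g^{\mathrm{Lip}}_{\mathrm{der}}\|_{L^{p_1/(p_1-2)}_\mu}\big)\|\zeta\|^2$ for a.e.\ $t$.

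Your version buys a self-contained, quantitative statement: an explicit quadratic remainder and local Lipschitz continuity of $A'$, without having to match the precise hypotheses of an external theorem. The paper's version is shorter. As far as the cited theorem is concerned, it needs only continuity of $D_xA$, not Lipschitz continuity, although the paper ends up verifying it through the same local $C^{1,1}$ estimate you use.

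One small correction: the strong measurability you flag as the main obstacle does not require any uniform-in-$t$ continuity. Approximate $Z$ a.e.\ in $t$ by simple functions and use, for each fixed $t$, the continuity of the Nemytskij maps from \cref{rmk:bound_a,rmk:cont_diff_a}. This exhibits $A(Z)$ and $D_xA(Z)$ as a.e.\ limits of strongly measurable functions, so that step is routine.
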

\begin{proof}
    The proof relies on the application of \cite[Theorem~7]{goldberg1992nemytskij}. 
    The fact that the operator $L^{p_1}_\mu(\Theta,\R) \ni \xi(\cdot) \mapsto a\big( t,\xi(\cdot), \cdot \big) \in L^1_\mu (\Theta,\R)$ is Fréchet-differentiable for every $t\in [0,T]$ has been established in \cref{rmk:cont_diff_a}.
    In addition, we have to show that the operator
    \begin{equation*}
        [0,T]\times L^{p_1}_\mu(\Theta,\R^n) \ni ( t,\xi) \mapsto D_x a \big( t,\xi(\cdot),\cdot \big) \in \mathcal{Y}
    \end{equation*}
    complies with the two following conditions:
    \begin{itemize}
        \item[a)] $t\mapsto D_x a\big( t,\xi(\cdot), \cdot \big) \in \mathcal{Y}$ is Bochner-measurable for every $\xi \in L^{p_1}_\mu(\Theta,\R^n)$;
        \item[b)] $L^{p_1}_\mu(\Theta,\R^n) \ni \xi \mapsto D_x a\big( t,\xi(\cdot),\cdot \big) \in \mathcal{Y}$ is continuous for (almost) every $t\in [0,T]$.
    \end{itemize}
    We recall that $\mathcal Y \coloneqq \mathcal L \left( L_\mu^{p_1}(\Theta,\R^n); L_\mu^{1}(\Theta,\R) \right)$.
    We establish a) by showing that, for every $\xi \in L^{p_1}_\mu(\Theta,\R^n)$, $t\mapsto D_x a\big( t,\xi(\cdot), \cdot \big) \in \mathcal{Y}$ is continuous. Namely, for every $t\in [0,T]$ and for every sequence $(t_n)_n$ such that $t_n \to t$ as $n\to\infty$, we observe that
    \begin{equation*}
        \sup_{\|\zeta\|_{L^{p_1}_\mu} \leq 1} \left\| \Big( D_x a\big(t_n,\xi(\cdot), \cdot) - D_x a\big(t,\xi(\cdot), \cdot) \Big) \zeta(\cdot) \right\|_{L_\mu^1}  = 
        \left\|  D_x a\big(t_n,\xi(\cdot), \cdot) - D_x a\big(t,\xi(\cdot), \cdot)  \right\|_{L_\mu^{p_1'}}
    \end{equation*}
    for every $n\geq 1$. Moreover, by virtue of the continuity of $t\mapsto D_x a \big( t,\xi(\theta),\theta)$ for $\mu$-a.e.~$\theta\in \Theta$ and of the bound in \cref{eq:bound_der_a}, we employ the DCT to deduce that $\left\|  D_x a\big(t_n,\xi(\cdot), \cdot) - D_x a\big(t,\xi(\cdot), \cdot)  \right\|_{L_\mu^{p_1'}} \to 0$ as $n\to\infty$.
    To address b), we note that, in view of \cref{eq:lip_a_der}, we have
    \begin{equation*}
        \begin{split}
        \sup_{\|\zeta\|_{L^{p_1}_\mu} \leq 1} \Big\| \Big( D_x a\big(t, &\xi(\cdot), \cdot)  - D_x a\big(t,\xi'(\cdot), \cdot) \Big) \zeta(\cdot) \Big\|_{L_\mu^1} \\
        & \leq  \sup_{\|\zeta\|_{L^{p_1}_\mu}\leq 1} 
        \left\|   \Big( L_{\mathrm{der}}(|\xi(\cdot)|, \theta) + L_{\mathrm{der}}(|\xi'(\cdot)|, \theta) \Big) |\zeta(\cdot)|   \right\|_{L_\mu^{p_1'}} \|\xi(\cdot) - \xi'(\cdot) \|_{L_\mu^{p_1}},
        \end{split}
    \end{equation*}
    and, after observing that $\big( L_{\mathrm{der}}(|\xi(\cdot)|, \theta) + L_{\mathrm{der}}(|\xi'(\cdot)|, \theta) \big) |\zeta(\cdot)| \in  L_\mu^{p_1'}(\Theta,\R)$, we conclude that b) holds for every $t\in [0,T]$.
    Finally, the last hypotheses required by \cite[Theoremn~7]{goldberg1992nemytskij} is the continuity of the operator $D_x A$ defined in \cref{eq:def_nemytskij_der_A}, and we establish it in \cref{lem:lipschitz_nemytskij_der_A}.
\end{proof}

In order to unburden the notation somewhat, we introduce 
for every $t\in [0,T]$ and for every $u\in\U$ the term $S_t^u\in L^{1}_\mu(\Theta)$ defined by:
\begin{equation} \label{eq:def_Bochner_term}
    S_t^u(\cdot) \coloneqq A(X^u)_t(\cdot),
\end{equation}
where $X^u$ is the ensemble trajectory given by \eqref{eq:diff_eq_X}.
We prove that $t\mapsto S_t^u$ is Bochner-integrable with respect to $\nu \in \mathcal{M}([0,T])$.

\begin{lemma}[Boundedness of the Integral Operator] \label{lem:bochner_operator}
    Let $\nu \in \mathcal{M}([0,T])$ be a positive Radon measure. Then, the operator 
    % \begin{equation} \label{eq:def_integr_op}
    %     \int_{[0,T]} \cdot \ \diff \nu (t) \colon C^0\big([0,T], L_\mu^1(\Theta,\R) \big) \to L_\mu^1(\Theta,\R), \qquad W_\cdot \mapsto   \int_{[0,T]} W_t \, \diff \nu (t)
    % \end{equation}
    \begin{equation} \label{eq:def_integr_op}
       \langle\nu, \cdot\rangle \colon C^0\big([0,T], L_\mu^1(\Theta,\R) \big) \to L_\mu^1(\Theta,\R), \qquad W \mapsto   \langle \nu,W\rangle = \int_{[0,T]} W_t \, \diff \nu (t)
    \end{equation}
    is linear and bounded.
\end{lemma}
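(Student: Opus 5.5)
The plan is to first show that the Bochner integral $\int_{[0,T]} W_t \,\diff\nu(t)$ is well defined for every $W \in C^0\big([0,T], L_\mu^1(\Theta,\R)\big)$. Then linearity and the norm bound follow from standard properties of the Bochner integral.

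\textbf{Well-posedness.} Fix $W\in C^0\big([0,T], L^1_\mu(\Theta,\R)\big)$. Since $[0,T]$ is compact and $W$ is continuous, the image $W([0,T])$ is compact in $L^1_\mu(\Theta,\R)$, and hence separable. I will show that $W$ is strongly $\nu$-measurable by building simple approximants directly. For a partition $0=s_0<s_1<\dots<s_N=T$ with mesh $\delta$, set
\begin{equation*}
W^{(\delta)}_t \coloneqq \sum_{j} W_{s_j}\,\mathbf 1_{I_j}(t),
\end{equation*}
where $I_j$ are the Borel cells $[s_j,s_{j+1})$, with the last cell closed. By uniform continuity of $W$ on $[0,T]$,
\begin{equation*}
\sup_{t}\|W^{(\delta)}_t-W_t\|_{L^1_\mu}\to 0 \quad \text{as } \delta\to 0.
\end{equation*}
So $W$ is a uniform limit of $\nu$-simple functions, and in particular it is strongly measurable. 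Moreover, $t\mapsto\|W_t\|_{L^1_\mu}$ is continuous and therefore bounded by $\|W\|_{C^0}$. Since $\nu$ is finite, Bochner's criterion gives $\int_{[0,T]}\|W_t\|_{L^1_\mu}\,\diff\nu(t)<\infty$, so $W$ is Bochner integrable. Its integral is the $L^1_\mu$-limit of the Riemann-type sums $\sum_j \nu(I_j)W_{s_j}$.

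\textbf{Linearity and bound.} Linearity is inherited from the linearity of the Bochner integral, or directly from the approximating sums. For boundedness I use the standard estimate $\|\int W\,\diff\nu\|\le\int\|W\|\,\diff\nu$, which gives
\begin{equation*}
\big\|\langle\nu,W\rangle\big\|_{L^1_\mu} \le \int_{[0,T]}\|W_t\|_{L^1_\mu}\,\diff\nu(t)\le \nu([0,T])\,\sup_{t\in[0,T]}\|W_t\|_{L^1_\mu}.
\end{equation*}
This shows that the operator norm is at most $\nu([0,T])$.

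\textbf{Application to $S^u$ and the main obstacle.} To apply the lemma to $S^u$ from \cref{eq:def_Bochner_term}, I will also check that $t\mapsto S^u_t$ is continuous into $L^1_\mu$. Take $t_n\to t$. Then $X^u_{t_n}\to X^u_t$ in $L^{p_0}_\mu$, and hence in $L^{p_1}_\mu$ because $p_1\le p_0$ and $\mu$ is a probability measure. The estimate \cref{eq:lip_a} together with H\"older's inequality controls $\|a(t_n,X^u_{t_n})-a(t_n,X^u_t)\|_{L^1_\mu}$. The remaining term $a(t_n,X^u_t)-a(t,X^u_t)$ tends to zero by the DCT, using continuity in $t$ and the growth bound \cref{eq:bound_a}. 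The only genuinely non-trivial point is this measurability and continuity: $\nu$ may have atoms and need not be absolutely continuous with respect to Lebesgue measure. This is why I rely on continuity of $W$, which gives uniform simple approximation for every finite Borel measure, rather than on almost-everywhere arguments.
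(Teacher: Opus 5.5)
Your proof is correct and follows the paper's argument. Strong measurability comes from the continuity of $W$: you build the uniformly convergent step approximants explicitly, where the paper simply cites Hille--Phillips. Finiteness of $\int_{[0,T]}\|W_t\|_{L^1_\mu}\,\diff\nu(t)$ follows from the boundedness of $t\mapsto\|W_t\|_{L^1_\mu}$ and the finiteness of $\nu$, and the estimate $\|\langle\nu,W\rangle\|_{L^1_\mu}\le \nu([0,T])\sup_t\|W_t\|_{L^1_\mu}$ concludes.

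Your final paragraph on $S^u$ is not needed for this lemma; it is the content of the paper's next lemma. There the paper uses a.e.\ convergence along subsequences and domination by the trajectory bound, while your Lipschitz--H\"older split plus dominated convergence in $t$ works equally well.
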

\begin{proof}
    Fix $W \in C^0\big([0,T], L_\mu^1(\Theta,\R) \big)$.
    According to \cite[Theorem~3.7.4]{hille1996functional}, the well-posedness of the Bochner integral $\int_{[0,T]} W_t \, \diff \nu (t)$ requires $W$ to be strongly measurable and $\int_{[0,T]} \|W_t\|_{L_\mu^{1}} \,\diff \nu(t)<\infty$. 
    The strong measurability follows from the continuity of $t\mapsto W_t$. On the other hand, we see that
    \begin{equation*}
      \left\|\int_{[0,T]} W_t \,\diff\nu(t) \right\|_{L_\mu^{1}} \leq \int_{[0,T]} \|W_t\|_{L_\mu^{1}} \,\diff \nu(t) \leq |\nu|([0,T])  \sup_{t\in [0,T]} \|W_t\|_{L_\mu^{1}},
    \end{equation*}
    which further implies the boundedness of the operator. Finally, the linearity is immediate.
\end{proof}
%% Original:
% \begin{proof}
%     Fix $W \in C^0\big([0,T], L_\mu^1(\Theta,\R) \big)$.
%     According to \cite[Theorem~3.7.4]{hille1996functional}, the well-posedness of the Bochner integral $\int_{[0,T]} W_t \, \diff \nu (t)$ requires us to show that $W$ is strongly measurable and $\int_{[0,T]} \|W_t\|_{L_\mu^{1}} \,\diff\nu(t)<\infty$. 
%     The strong measurability follows from the continuity of $t\mapsto W_t$. On the other hand, we notice that
%     \begin{equation*}
%       \left\|\int_{[0,T]} W_t \,\diff\nu(t) \right\|_{L_\mu^{1}} \leq \int_{[0,T]} \|W_t\|_{L_\mu^{1}} \,\diff\nu(t) \leq |\nu|([0,T])  \sup_{t\in [0,T]} \|W_t\|_{L_\mu^{1}},
%     \end{equation*}
%     which further implies the boundedness of the operator. Finally, the linearity is immediate.
% \end{proof}

We stress the fact that, in this work, we are mainly interested in positive Radon measures $\nu \in \M^+([0,T])$, as specified in \cref{ass:risk_ingred}. In light of the results above, we can argue that the random objective functionals have sufficient regularity for the existence of solutions and derivation of optimality conditions.

\begin{lemma}[Well-Posedness of the Reduced Objective Functional $\mathcal{J}_u$] \label{lem:Bochner_integr}
    Let \cref{ass:fields,ass:cauchy,ass:risk_ingred} hold.
    For every $u\in\U$, the function $S^u \colon [0,T]\to L^{1}_\mu(\Theta)$ belongs to $C^0\big([0,T], L_\mu^1(\Theta,\R) \big)$.
    In particular, it is Bochner-integrable with respect to the measure $\nu \in \M^+([0,T])$ and 
    \begin{equation} \label{eq:def_tracking_term}
        \J_u \coloneqq \int_{[0,T]} S_t^u \,\diff \nu(t) \in L^{1}_\mu(\Theta).
    \end{equation}
\end{lemma}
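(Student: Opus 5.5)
The plan is to show that $t\mapsto S^u_t = a\big(t,X^u_t(\cdot),\cdot\big)$ is continuous from $[0,T]$ into $L^1_\mu(\Theta,\R)$. Bochner integrability with respect to $\nu$ and the membership $\J_u\in L^1_\mu(\Theta)$ then follow at once from \cref{lem:bochner_operator} applied with $W=S^u$.

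First we check that $S^u_t$ actually lies in $L^1_\mu$, uniformly in $t$. By \cref{lem:bound_traj},
\[
\sup_{t}|x^\theta_u(t)|\le \big(|x_0(\theta)|+C\|u\|_{L^1}\big)e^{L\|u\|_{L^1}} \eqqcolon h(\theta).
\]
Since $x_0\in L^{p_0}_\mu$ with $p_0\ge 2p_1$ and $\mu$ is a probability measure, $h\in L^{p_1}_\mu$. Inserting this into the growth bound \cref{eq:bound_a} of \cref{rmk:bound_a} gives
\[
|S^u_t(\theta)|\le C^{\mathrm{Lip}}h(\theta)^{p_1}+2g^{\mathrm{Lip}}(\theta)h(\theta)+\bar g_a(\theta) \eqqcolon G(\theta)
\]
for every $t$ and $\mu$-a.e.\ $\theta$. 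The three terms are integrable: $h^{p_1}\in L^1_\mu$; $g^{\mathrm{Lip}}h\in L^1_\mu$ by Hölder's inequality with exponents $p_1',p_1$; and $\bar g_a\in L^1_\mu$ by assumption. Hence $G\in L^1_\mu$, and measurability of $\theta\mapsto S^u_t(\theta)$ comes from the Carathéodory property I together with the measurability of $X^u_t$.

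For continuity, fix $t_m\to t$. By \eqref{eq:pointwise_X}, for $\mu$-a.e.\ $\theta$ the curve $s\mapsto x^\theta_u(s)$ is continuous (indeed absolutely continuous). Since $(s,x)\mapsto a(s,x,\theta)$ is jointly continuous for a.e.\ $\theta$, it follows that $S^u_{t_m}(\theta)=a\big(t_m,x^\theta_u(t_m),\theta\big)\to S^u_t(\theta)$ pointwise $\mu$-a.e. Moreover $|S^u_{t_m}-S^u_t|\le 2G\in L^1_\mu$, so the DCT gives $\|S^u_{t_m}-S^u_t\|_{L^1_\mu}\to0$, i.e.\ $S^u\in C^0\big([0,T],L^1_\mu(\Theta,\R)\big)$.

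The only delicate point is the step where we pass from the Bochner trajectory $X^u$ to the pointwise representatives $x^\theta_u$. We need a single $\mu$-null set outside of which the identity \eqref{eq:pointwise_X} holds for all $t$ simultaneously. The plan is to take this from the statement of \eqref{eq:pointwise_X} (which is asserted for every $t$ and a.e.\ $\theta$, the null set being the one where the Carathéodory solution is defined). Should this be problematic, we would instead argue with the continuity of $t\mapsto X^u_t$ in $L^{p_0}_\mu$ together with the Nemytskij continuity from \cref{rmk:bound_a}, splitting
\[
S^u_{t_m}-S^u_t=\big[a(t_m,X^u_{t_m})-a(t_m,X^u_t)\big]+\big[a(t_m,X^u_t)-a(t,X^u_t)\big].
\]
The first bracket is controlled through \cref{eq:lip_a} and Hölder's inequality, and the second by the DCT using continuity in $t$.
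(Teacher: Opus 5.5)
Your proposal is correct and follows essentially the same route as the paper: you use the same dominating function $C^{\mathrm{Lip}}h^{p_1}+2g^{\mathrm{Lip}}h+\bar g_a\in L^1_\mu$ (built from \cref{lem:bound_traj} and \eqref{eq:bound_a}), pointwise convergence from the joint continuity of $a$, then the DCT and finally \cref{lem:bochner_operator}. The one difference is how you get pointwise convergence: you use continuity of each trajectory $x^\theta_u$, whereas the paper extracts an a.e.-convergent subsequence from $X^u_{t_n}\to X^u_t$ in $L^{p_0}_\mu$ and then uses the subsequence principle; your ``delicate point'' is harmless, because only the countably many times $t_m,t$ are involved, and the union of the corresponding $\mu$-null sets is still null.
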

\begin{proof}
    To see that $t\mapsto S^u_t \in L^{1}_\mu(\Theta)$ is continuous, fix $t\in [0,T]$ and consider $(t_n)_{n\geq1}$ such that $t_n\to t$. The solution $X^u$ of \eqref{eq:diff_eq_X} is continuous, hence $X_{t_n}^u \to_{L^{p_0}_\mu} X_t^u$ as $n\to \infty$ and, up to the extraction of a non-relabeled subsequence, we deduce that $X_{t_n}^u(\theta) \to X_t^u(\theta)$ for $\mu$-a.e.~$\theta\in \Theta$.
    Hence, for $\mu$-a.e.~$\theta\in \Theta$ we have that
    \begin{equation}
        \lim_{n\to \infty} S_{t_n}^u(\theta) = 
        \lim_{n\to \infty} a \big( t_n, X_{t_n}^u(\theta), \theta \big)
        = a \big( t, X_{t}^u(\theta), \theta \big) = S_t^u(\theta).
    \end{equation}
    To conclude, we apply the DCT. By definition, $S^u_t(\cdot)=A(X^u)_t(\cdot)$ and from \cref{eq:bound_a} it follows that
    \begin{equation*}
    \begin{split} 
        |S_{t_n}^u(\theta)| &\leq  {\textstyle \sup_{\tau \in [0,T]}} \left(
        C^{\mathrm{Lip}}|X_\tau^u(\theta)|^{p_1} + 2 g^{\mathrm{Lip}}(\theta)|X_\tau^u(\theta)| \right)+ \bar g_a(\theta)\\
        &= {\textstyle \sup_{\tau \in [0,T]}} \left(
        C^{\mathrm{Lip}}|x_u^\theta(\tau)|^{p_1} + 2 g^{\mathrm{Lip}}(\theta)|x_u^\theta(\tau)| \right)+ \bar g_a(\theta).
    \end{split}
    \end{equation*}
    Recalling that $\bar g_a \in L_\mu^{1}(\Theta)$ by \cref{ass:risk_ingred}, we are left to show that 
    $$g_0(\theta)\coloneqq {\textstyle \sup_{\tau \in [0,T]}} \left(
        C^{\mathrm{Lip}}|x_u^\theta(\tau)|^{p_1} + 2 g^{\mathrm{Lip}}(\theta)|x_u^\theta(\tau)| \right)$$
    belongs to $L_\mu^{1}(\Theta)$ as well.
    To see this, we observe that, by virtue of \cref{lem:bound_traj}, it turns out that $\theta\mapsto \sup_{\tau \in [0,T]}|x_u^\theta(\tau)| \in L_\mu^{p_0}(\Theta) \subset L_\mu^{p_1}(\Theta)$. We also recall that, as prescribed in \cref{ass:risk_ingred}, $g^{\mathrm{Lip}}\in L^{p_0'}_\mu(\Theta)$ with $p_0'=p_0/(p_0-1)$.
    Therefore, we conclude that the sequence $(S^u_{t_n})_{n\in \NN} \subset L^1_\mu(\Theta)$ is dominated by $g_0 + \bar g_a \in L^1_\mu(\Theta)$, and, by the DCT, we obtain that $S_{t_n}^u \to_{L^1_\mu} S_t^u$ as $n\to \infty$. Finally, we note that we may pass to the full sequence here since the limit $S^u_t$ is unique, i.e., using the Urysohn Lemma.
    %By \cref{eq:bound_a,lem:bound_traj}, there exists a constant $\kappa = \kappa\big( \| x_0 \|_{L_\mu^{p_0}}, \| u \|_{L^1}, T, a \big)$ such that $\| S_t^u \|_{L^{1}_\mu} \leq \kappa$ for every $t\in [0,T]$, and we obtain
    %\begin{equation} \label{eq:est_norm_T}
     %   \|\J_u \|_{L^{1}_\mu} \leq  \int_{[0,T]} \| S_t^u \|_{L_\mu^{1}} \, \diff \nu(t) \leq \kappa \,  \nu([0,T]),
    %\end{equation}
    %which completes the proof.
    The last part of the statement follows directly from \cref{lem:bochner_operator}.
\end{proof}

\begin{remark} \label{rmk:restriction_Nem}
    From the proof of \cref{lem:Bochner_integr} we can distill the following more general conclusion: If we take $Z \in C^0  \big( [0,T], L_\mu^{p_1}(\Theta,\R^n) \big) \subset L^\infty \big( [0,T], L_\mu^{p_1}(\Theta,\R^n) \big)$, then $A(Z) \in C^0 \big( [0,T], L_\mu^{1}(\Theta,\R^n) \big) \subset L^\infty \big( [0,T], L_\mu^{1}(\Theta,\R^n) \big)$. 
\end{remark}

We observe that for $\mu$-a.e.~$\theta\in \Theta$ the mapping $\J \colon \U \to L^{1}_\mu (\Theta)$ satisfies the following identity:
\begin{equation*} 
    \J_u(\theta) = \int_{[0,T]}
    a\big( t, x_u^\theta(t), \theta \big)
    \,\diff \nu(t)
\end{equation*}
for every $u\in \U$.
We show below that $u\mapsto \J_u$ is weak-to-strong sequentially continuous.

\begin{proposition} [Complete Continuity of the Reduced Objective Functional]\label{prop:continuity_tracking}
    Let \cref{ass:fields,ass:cauchy,ass:risk_ingred} hold.
    Then, 
    %for every $u\in \U$, we have that the function $\J_u$ introduced in \eqref{eq:def_tracking_term} belongs to $L^q_{\mu}(\Theta)$.Moreover, 
    for every sequence of controls $(u_m)_{m\in\NN} \subset \U$ such that $u_m\weak_{L^q}u_\infty$ as
    $m\to\infty$, we have 
    \begin{equation} \label{eq:J_strong_conv}
        \lim_{m\to\infty} \|\J_{u_m} - \J_{u_\infty}\|_{L^{1}_\mu} =0.
    \end{equation}
\end{proposition}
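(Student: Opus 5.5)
The plan is to reduce the claim to the uniform-in-time convergence of $S^{u_m}$ to $S^{u_\infty}$ in $L^1_\mu$, and then apply \cref{lem:bochner_operator}. Since $\J_{u_m} - \J_{u_\infty} = \langle \nu, S^{u_m} - S^{u_\infty}\rangle$ and $S^{u}\in C^0\big([0,T],L^1_\mu(\Theta,\R)\big)$ by \cref{lem:Bochner_integr}, we get
\begin{equation*}
\|\J_{u_m} - \J_{u_\infty}\|_{L^1_\mu} \leq \nu([0,T]) \sup_{t\in[0,T]} \|S^{u_m}_t - S^{u_\infty}_t\|_{L^1_\mu}.
\end{equation*}
It therefore suffices to show that this supremum tends to zero.

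For each $t$ and $\mu$-a.e.~$\theta$, the local Lipschitz estimate \cref{eq:lip_a} gives
\begin{equation*}
|S^{u_m}_t(\theta) - S^{u_\infty}_t(\theta)| \leq \big( L(|x_m^\theta(t)|,\theta) + L(|x_\infty^\theta(t)|,\theta)\big)\, |x_m^\theta(t) - x_\infty^\theta(t)|.
\end{equation*}
We take the supremum over $t$ pointwise in $\theta$ and set $R_m(\theta) \coloneqq \sup_t |x_m^\theta(t)|$ and $\delta_m(\theta)\coloneqq \sup_t|x_m^\theta(t)-x_\infty^\theta(t)|$. Then
\begin{equation*}
\sup_t \|S^{u_m}_t - S^{u_\infty}_t\|_{L^1_\mu} \leq \int_\Theta \Big( C^{\mathrm{Lip}}\big(R_m^{p_1-1} + R_\infty^{p_1-1}\big) + 2 g^{\mathrm{Lip}}\Big)\, \delta_m \,\diff\mu.
\end{equation*}

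Two routes are available, and either should work.

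The first route is Hölder's inequality. We apply Hölder with exponents $p_1'$ and $p_1$. Weakly convergent sequences are bounded, so $\sup_m \|u_m\|_{L^1}<\infty$, and \cref{lem:bound_traj} then gives $R_m \leq (|x_0| + C M) e^{LM}$ uniformly in $m$. Consequently $R_m^{p_1-1}$ is bounded in $L^{p_1'}_\mu$, since $(p_1-1)p_1' = p_1 \le p_0$. Also $g^{\mathrm{Lip}}\in L^{p_1'}_\mu$ by assumption. It remains to check that $\|\delta_m\|_{L^{p_1}_\mu}\to 0$. This follows from the pointwise convergence $\delta_m\to 0$ in \cref{prop:unif_conv_map_X} together with dominated convergence, using the majorant $\delta_m \le 2(|x_0|+CM)e^{LM}\in L^{p_0}_\mu\subset L^{p_1}_\mu$.

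The second route applies dominated convergence directly to the integrand. The integrand tends to $0$ $\mu$-a.e. and is dominated by an $L^1_\mu$ function built from the uniform bound above; the $g^{\mathrm{Lip}}\,\delta_m$ term is handled by Hölder, as in the first route.

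The main point requiring care is this uniform integrable domination, which must hold independently of $m$. It relies on the uniform $L^1$ bound on $(u_m)$, which comes from weak convergence in $L^q$ on a bounded interval, and on the exponent restriction $p_1\le p_0/2$ in \cref{ass:risk_ingred}, which ensures that $R^{p_1}$ is integrable. A second point of care is that the supremum over $t$ must be taken inside the integral. This is legitimate because the pointwise Lipschitz bound holds for every $t$ simultaneously, for $\mu$-a.e.~$\theta$.
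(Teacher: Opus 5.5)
Your proof is correct, but it takes a different route from the paper's.

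The paper does not use the Lipschitz estimate \cref{eq:lip_a} directly. It combines \cref{prop:unif_conv_map_X} with the joint continuity of $(t,x)\mapsto a(t,x,\theta)$ to get $a(\cdot,x_m^\theta(\cdot),\theta)\to a(\cdot,x_\infty^\theta(\cdot),\theta)$ uniformly on $[0,T]$. This gives $\J_{u_m}(\theta)\to\J_{u_\infty}(\theta)$ for $\mu$-a.e.~$\theta$. It then applies dominated convergence to $\J_{u_m}$ itself, with a majorant built from the growth bound \cref{eq:bound_a} and \cref{lem:bound_traj}.

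You instead pass from convergence of states to convergence of costs through the local Lipschitz estimate. This yields a quantitative bound $\|\J_{u_m}-\J_{u_\infty}\|_{L^1_\mu}\lesssim\|\delta_m\|_{L^{p_1}_\mu}$. Your first route is really the composition $\langle\nu,\cdot\rangle\circ A\circ X$, and it could be shortened. You could cite \cref{lem:lipschitz_nemytskij_A}, which only needs $\sup_t\|X^{u_m}_t-X^{u_\infty}_t\|_{L^{p_1}_\mu}$, so the supremum need not be moved inside the integral. Combine it with the second part of \cref{prop:unif_conv_map_X}, the uniform bound on $\sup_t\|X^{u_m}_t\|_{L^{p_1}_\mu}$ from \cref{lem:bound_traj}, and \cref{lem:bochner_operator}.

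What each approach buys:
\begin{itemize}
\item The paper's argument needs only continuity and polynomial growth of $a$, so it would survive weakening \cref{eq:lip_a} to a growth condition.
\item Yours is more modular and matches the operator-composition structure used later in \cref{thm:frechet_diff}. It also gives a rate in terms of the state error.
\item Yours makes explicit the uniform-in-$m$ majorant via $\sup_m\|u_m\|_{L^1}<\infty$. The paper leaves this implicit: its $g_0$ was defined for a fixed $u$.
\end{itemize}

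One small correction. Integrability of $R^{p_1}$ only uses $p_1\le p_0$ (and $\mu$ being a probability measure). The restriction $p_1\le p_0/2$ is there for the differentiability results of \Cref{sec:diff-tracking}, not for this proposition.
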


\begin{proof}
    Let us consider a sequence of controls $(u_m)_{m\in\NN} \subset \U$ such that $u_m\weak_{L^q}u_\infty$ as
    $m\to\infty$, and for $\mu$-a.e.~$\theta\in \Theta$ let $t\mapsto x^\theta_m(t)$ be the solution of \eqref{eq:ens_ctrl_sys} driven by $u_m$, with $m\in \Next$.
    By virtue of \cref{prop:unif_conv_map_X} and using the continuity of $a$ in $(t,x)$ for $\mu$-a.e.~$\theta\in \Theta$, we obtain that
    \begin{equation*}
        \lim_{m\to\infty} 
        \left\| 
        a\big( \cdot, x_m^\theta(\cdot),\theta \big) - a\big( \cdot, x_\infty^\theta(\cdot),\theta \big)
        \right\|_{C^0([0,T])} = 0
    \end{equation*}
    for $\mu$-a.e.~$\theta\in \Theta$, which implies that 
    \begin{equation} \label{eq:J_pointwise_conv}
        \lim_{m\to\infty} \J_{u_m}(\theta)
        =
        \lim_{m\to\infty}  \left\langle \nu, 
        a\big( \cdot, x_m^\theta(\cdot),\theta \big)
        \right\rangle 
        =  \left\langle \nu, 
        a\big( \cdot, x_\infty^\theta(\cdot),\theta \big)
        \right\rangle 
        = \J_{u_{\infty}}(\theta)
    \end{equation}
    for $\mu$-a.e.~$\theta\in \Theta$.
    % Moreover, arguing as in the proof of \cref{lem:Bochner_integr} (i.e., leveraging on \cref{eq:bound_a,lem:bound_traj}), it is possible to construct a dominant $g_0 + \bar g_a \in L^1_\mu(\Theta)$ for the sequence $(\J_{u_m})_{m\in \NN}\subset L^{ 1}_{\mu}(\Theta)$.
    Moreover, owing to \cref{eq:bound_a} and \cref{lem:bound_traj}, the sequence $(\J_{u_m})_{m\in \NN}\subset L^{ 1}_{\mu}(\Theta)$ is uniformly dominated by the function $g_0(\theta) + \bar g_a(\theta) \in L^1_\mu(\Theta)$. The conclusion in \cref{eq:J_strong_conv} then follows by the DCT.
\end{proof}

\section{Risk-averse ensemble optimal control} \label{sec:risk_av_EOC}

In this section, we provide the formulation of risk-averse ensemble optimal control problems and determine a minimal set of assumptions that guarantee the existence of optimal controls. As discussed in \Cref{sec:cost-func}, the reduced objective functionals are themselves random variables $\mathcal{J}_{u} \in L^1_{\mu}(\Theta)$. The goal of choosing a risk measure is to provide a deterministic scalar surrogate for our risk preference. This ultimately yields an optimal control $u^{\star}$ that shapes the law $(\mu \circ \mathcal{J}_{u^{\star}}^{-1})$ in such a way that we systematically penalize undesirable statistical characteristics of the random cost, thereby hedging against unfavorable outcomes when employing $u^{\star}$ in practice.
% Furthermore, we show the convergence of minimizers corresponding to perturbed risk measures towards the original problem solutions as the perturbation tends to $0$.

At the most abstract level, a risk measure is a functional $\cR \colon L_\mu^1(\Theta)\to \Rext$ that quantifies a specific risk preference. There are many useful classes of risk measures, such as \textit{Coherent Risk Measures} \cite{Artzner1999}, \textit{Convex Risk Measures} \cite{Fllmer2002}, and \textit{Regular Measures of Risk} \cite{Rockafellar2013}, to name a few. For a modern mathematical introduction, we refer the reader to \cite{Shapiro2021} or \cite{Pflug2007}. Finally, we also mention recent developments in \cite{bonalli2025risk_measures}.
Nevertheless, for our purposes, we emphasize that only a subset of the various axioms used to define coherent or regular risk measures is needed to prove the existence of optimal controls. We collect these minimal requirements in the following assumption.

\begin{assumption}[Minimal Properties of the Risk Measure $\cR$] \label{ass:sigma_1}
    The mapping $\cR \colon L_\mu^1(\Theta)\to (-\infty, +\infty]$ is a proper, extended real-valued functional satisfying the following properties:
    \begin{enumerate}
        \item (Convexity) $\cR(\lambda W_1 + (1-\lambda)W_2) \leq \lambda \cR(W_1) + (1-\lambda)\cR(W_2)$ for all $W_1, W_2 \in L_\mu^1(\Theta)$ and $\lambda \in [0,1]$;
        \item (Monotonicity) $\cR(W_1) \geq \cR(W_2)$ whenever $W_1 \geq W_2$ for $\mu$-a.e.~$\theta \in \Theta$;
        \item (Finiteness on Constants) $\cR(c) < +\infty$ for every constant $c \in \mathbb{R}$;
        \item (Lower Semi-Continuity) $\cR$ is lower semi-continuous on $L_\mu^1(\Theta)$;
        \item (Interior Domain) $\mathrm{int}(\mathrm{dom}\,\cR) \neq \emptyset$.
    \end{enumerate}
\end{assumption}

% Having introduced $\cR$, we are in a position to detail the cost related to the performance of the control on the ensemble. More precisely, for every control $u\in \U$, we consider the functional
% \begin{equation} \label{eq:ensemble_cost}
%     u\mapsto \cR(\J_u), 
%     %= \cR \left( \int_{[0,T]} a\big( t, x_u^\theta(t), \theta \big) \, \diff\nu(t) \right)
% \end{equation}
% where the map $u\mapsto \J_u\in L_\mu^1(\Theta)$ has been defined in \eqref{eq:def_tracking_term}.
% Unfortunately, in general we cannot expect the functional in \eqref{eq:ensemble_cost} to admit a minimizer. Indeed, since we do not penalize controls $u$ with ``high energy'', we shall face a lack of coercivity.
% To amend this point, we add an integral running cost for the control, expressed in the form $u\mapsto \int_0^T f\big(t,u(t)\big) \,\diff t$. 

Applying risk measures like those in \Cref{ass:sigma_1} to the objective term $\mathcal{J}_{u}$ defined in \eqref{eq:def_tracking_term} provides us with a scalar-valued deterministic function that we may use in the optimal control problem, i.e. for $u\in \U$, we consider the functional
\begin{equation} \label{eq:ensemble_cost}
    u\mapsto \cR(\J_u), 
    %= \cR \left( \int_{[0,T]} a\big( t, x_u^\theta(t), \theta \big) \, \diff\nu(t) \right).
\end{equation}
However, it is generally not expected that the functional in \eqref{eq:ensemble_cost} will have sufficient coercivity properties need to prove that the sublevel sets are weakly inf-compact in $\mathcal{U}$. Therefore, following \cite{Scag_25}, we introduce a general class of control costs $\rho \colon L^{q}([0,T],\mathbb R^k) \to \mathbb R$ by
\[
\rho(u) = \int_{[0,T]} f\big(t,u(t)\big) \,\diff t,
\]
where $f$  fulfills the conditions listed below.

\begin{assumption}[Properties of the Control Cost] \label{ass:running_cost}
The function $f\colon [0,T]\times \R^k \to \R$ defining the integral cost is such that:
\begin{enumerate}
\item[(i)] $f(\cdot,v)$ is measurable for every $v\in\R^k$.
\item[(ii)] $f(t,\cdot)$ is continuous and convex for a.e.~$t\in [0,T]$.
\item[(iii)] There exist $g^-\in L^1([0,T],\R_+)$, $C'>0$ and $q\in(1,\infty)$ such that 
\begin{equation} \label{eq:growth_f}
f(t,v) \geq C'|v|^q_2 - g^-(t)
\end{equation}
for a.e.~$t\in[0,T]$ and for every $v\in\R^k$.
\item[(iv)] There exist $v'\in\R^k$ and $g^+\in L^1([0,T],\R_+)$ so that $|f(t,v')|\leq g^+(t)$ for a.e.~$t\in [0,T]$.
\item[(v)] There exist $C'' > 0$ and $h \in L^1([0,T], \mathbb{R}_+)$ such that $f(t,v) \leq h(t) + C''|v|_2^q$ for a.e. $t \in [0,T]$ and every $v \in \mathbb{R}^k$.
\end{enumerate}
\end{assumption}
\begin{remark}
Here, we insist on the fact that, when setting the space of controls $\U\subset L^q([0,T],\R^k)$, the exponent $q\in(1,\infty)$ should match with \eqref{eq:growth_f} in \cref{ass:running_cost}.
\end{remark}

For existence of solutions, (v) is not necessary. However, since (i)-(v) ensure that $\rho$ is convex and \textit{continuous} on $L^q$, we are also guaranteed to have directional differentiability/convex subdifferentiability, which we require for the derivation of optimality conditions.

In the remainder of the section, the subject of investigation is the functional $\I\colon \U \to \Rext$ defined as follows:
% \begin{equation} \label{eq:def_functional}
%     \I(u) \coloneqq 
%     \cR(\J_u) + \alpha \int_0^T f\big( t, u(t) \big) \,\diff t\quad (\alpha > 0).
% \end{equation}
\begin{equation} \label{eq:def_functional}
    \I(u) \coloneqq 
    \cR(\J_u) + \alpha \rho(u)\quad (\alpha > 0).
\end{equation}
% for every $u\in \U$, and where $u\mapsto \J_u\in L_\mu^1(\Theta)$ is as in \eqref{eq:def_tracking_term} and $\alpha>0$.
We now establish the existence result for solutions of the optimal control problem related to the minimization of the functional $\I$.

\begin{theorem}[Existence of Optimal Controls] \label{thm:Existence}
    Let \cref{ass:fields,ass:cauchy,ass:risk_ingred,ass:sigma_1,ass:running_cost} hold, and let us consider the functional $\I\colon \U \to \Rext$ introduced in \eqref{eq:def_functional}.
    Then, there exists $u^\star \in \U$ such that $\I (u^\star) = \inf_\U \I$.
\end{theorem}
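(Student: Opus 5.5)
We argue by the direct method of the calculus of variations in the weak topology of $L^q([0,T],\R^k)$.

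\emph{Step 1: $\inf_\U \I$ is finite.} For the lower bound, \cref{ass:risk_ingred} gives $a\geq c_a$. Since $\nu$ is positive, this yields $\J_u(\theta)\geq c_a\,\nu([0,T])$ for $\mu$-a.e.~$\theta$. By monotonicity of $\cR$,
\begin{equation*}
\cR(\J_u)\geq \cR\big(c_a\nu([0,T])\big).
\end{equation*}
The right-hand side is $>-\infty$ because $\cR$ takes values in $(-\infty,+\infty]$. Combined with \eqref{eq:growth_f}, this gives
\begin{equation*}
\I(u)\geq \cR\big(c_a\nu([0,T])\big)+\alpha C'\|u\|_{L^q}^q-\alpha\|g^-\|_{L^1}.
\end{equation*}
Hence $\I$ is bounded below and coercive on $\U$. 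If $\I\equiv+\infty$ on $\U$, any $u\in\U$ is a minimizer, so we may assume $\inf_\U\I<+\infty$.

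\emph{Step 2: compactness.} Take a minimizing sequence $(u_m)\subset\U$. By the coercive bound of Step 1 it is bounded in $L^q$. Since $1<q<\infty$, $L^q$ is reflexive, so we can extract a non-relabeled subsequence with $u_m\weak_{L^q}u^\star$. Because $\U$ is weakly closed, $u^\star\in\U$.

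\emph{Step 3: lower semicontinuity.} We treat the two terms of $\I$ separately.

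For the risk term, \cref{prop:continuity_tracking} gives $\J_{u_m}\to\J_{u^\star}$ strongly in $L^1_\mu$. Lower semicontinuity of $\cR$ on $L^1_\mu$ (\cref{ass:sigma_1}) then yields $\cR(\J_{u^\star})\leq\liminf_m\cR(\J_{u_m})$. This is the key point: the weak-to-strong continuity lets us use only strong lower semicontinuity of $\cR$, with no compactness needed for $\cR$ itself.

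For the control cost $\rho$, I would show it is convex and strongly lower semicontinuous on $L^q$; by Mazur's lemma it is then weakly lower semicontinuous. Convexity follows from (ii). For strong lower semicontinuity, take $v_j\to v$ in $L^q$ and pass to an a.e.~convergent subsequence realizing the $\liminf$. Continuity of $f(t,\cdot)$ gives pointwise convergence of $f(t,v_j(t))$. The lower bound $f(t,v_j(t))+g^-(t)\geq0$ then lets us apply Fatou's lemma. Item (v) together with (iv) guarantees that $\rho$ is finite, so it is real-valued on $L^q$. Hence $\rho(u^\star)\leq\liminf_m\rho(u_m)$.

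\emph{Step 4: conclusion.} Since the $\liminf$ of a sum dominates the sum of the $\liminf$s, and both terms are bounded below along the sequence,
\begin{equation*}
\I(u^\star)\leq\liminf_m\I(u_m)=\inf_\U\I,
\end{equation*}
so $u^\star$ is a minimizer. The main substantive ingredient is \cref{prop:continuity_tracking}, which is already available. The only delicate point in the remaining argument is the lower bound for $\cR(\J_u)$, which needs monotonicity of $\cR$ together with properness; this is why the bound $a\geq c_a$ is assumed.
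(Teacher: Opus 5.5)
Your proof is correct and follows the paper's argument: coercivity from the monotonicity bound $\cR(\J_u)\geq\cR(\mathrm{const})$ combined with the growth condition on $f$, weak compactness in the reflexive space $L^q$ together with weak closedness of $\U$, and lower semicontinuity of the risk term via \cref{prop:continuity_tracking} and the strong lower semicontinuity of $\cR$. The differences are minor: you prove the weak lower semicontinuity of $\rho$ directly (Fatou plus Mazur) where the paper cites a result on integral functionals, and your constant $c_a\nu([0,T])$ is the accurate one where the paper writes $\cR(c_a)$.
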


\begin{proof}
    If $\I \equiv +\infty$, there is nothing to prove.
    Assume then that $\I \not \equiv +\infty$. Hence, there exists $M\in \R$ such that $S_M \coloneqq \{u\in \U : \I(u)\leq M\} \neq \emptyset$.
    If $u\in S_M$, we deduce that
    % \begin{equation} \label{eq:estimate_exist_1}
    %         M\geq \I(u) \geq
    %         \cR(c_a) + \alpha \int_0^T f\big( t, u(t) \big) \, \diff t
    %         \geq \cR(c_a) + \alpha C'\| u \|_{L^q}^q - \alpha\| g^-\|_{L^1},
    % \end{equation}
    \begin{equation} \label{eq:estimate_exist_1}
            M\geq \I(u) \geq
            \cR(c_a) + \alpha \rho(u)
            \geq \cR(c_a) + \alpha C'\| u \|_{L^q}^q - \alpha\| g^-\|_{L^1},
    \end{equation}
    where we use the lower bound $\J_u \geq c_a$ $\mu$-almost everywhere (see \cref{ass:risk_ingred}) and the monotonicity of $\cR$ (see \cref{ass:sigma_1}) in the first inequality, and the lower bound on the integral running cost (see \cref{ass:running_cost}) for the second. Rearranging \eqref{eq:estimate_exist_1}, we deduce the inclusion 
    \begin{equation} \label{eq:estimate_exist_2}
        S_M \subset B_{\varepsilon}(0)\coloneqq 
        \{ u\in \U : \| u \|_{L^q} \leq \varepsilon \}, \qquad \varepsilon\coloneqq \left( 
        \frac{M- \cR(c_a) + \alpha\| g^-\|_{L^1}}{\alpha C'}
        \right)^{\frac{1}{q}}.
    \end{equation}
    Since $q\in (1,+\infty)$, $\I$ is coercive in $L^q$ and thus the sublevel sets are weakly compact in $L^q$.\\
    We now address the weak sequential lower semi-continuity of the functional $\I$. Consider a sequence $(u_m)_{m\geq 1}\subset \U$ such that $u_m \weak_{L^q} u_\infty$ as $m\to\infty$. To this end, we observe that 
    % \begin{equation} \label{eq:lsc_1}
    % \begin{split}
    %     \liminf_{m\to+\infty} \I(u_m)  & = \liminf_{m\to+\infty} \left(
    %     \cR(\J_{u_m}) + \alpha \int_0^t f\big(t,u_m(t)\big) \,\diff t 
    %     \right) \\
    %     &\geq \liminf_{m\to+\infty} 
    %     \cR(\J_{u_m}) + \alpha
    %     \liminf_{m\to+\infty} \int_0^t f\big(t,u_m(t)\big) \,\diff t. 
    % \end{split}
    % \end{equation}
    \begin{equation} \label{eq:lsc_1}
    \begin{split}
        \liminf_{m\to+\infty} \I(u_m)  & = \liminf_{m\to+\infty} \left(
        \cR(\J_{u_m}) + \alpha \rho(u_m) \right) 
        \\
        &\geq \liminf_{m\to+\infty} 
        \cR(\J_{u_m}) + \alpha
        \liminf_{m\to+\infty} \rho(u_m). 
    \end{split}
    \end{equation}
    Since $\left\| \J_{u_m} - \J_{u_\infty} \right\|_{L^1(\Theta)} \to 0$ by \cref{prop:continuity_tracking}, the lower semi-continuity of $\cR$ ensures
    \begin{equation} \label{eq:lsc_risk}
        \liminf_{m\to+\infty} 
        \cR(\J_{u_m}) \geq \cR(\J_{u_{\infty}}).
    \end{equation}
    For the control cost, we leverage classical results on integral functionals (see, e.g., \cite[Thm.~6.54]{FL07}), which given the conditions listed in \cref{ass:running_cost} yield the bound
    \begin{equation} \label{eq:lsc_running}
        \liminf_{m\to+\infty} \rho(u_m) = 
        \liminf_{m\to+\infty} \int_{[0,T]} f\big(t,u_m(t)\big) \,\diff t \geq
        \int_{[0,T]} f\big(t,u_\infty(t)\big) \,\diff t = \rho(u_{\infty}).
    \end{equation}
    Combining \eqref{eq:lsc_1}, \eqref{eq:lsc_risk} and \eqref{eq:lsc_running}, we deduce that $\I$ is sequentially lower semi-continuous with respect to the weak topology of $L^q$. It then follows from the direct method, see \cite[Thm.~1.15]{D93}, that $u_{\infty} = u^{\star} \in \mathcal{U}$ is a minimizer of $\mathcal{I}$ over $\mathcal{U}$.
    % Finally, by virtue of the direct method (see \cite[Thm.~1.15]{D93}), being $\I$ weakly coercive and lower semi-continuous, there exists $u^\star\in\U$ where the minimum is attained.
\end{proof}

\section{Differentiability Properties of the Tracking Term $\mathcal{J}$}\label{sec:diff-tracking}

The derivation of optimality conditions and analysis of derivative-based numerical optimization methods, typically require several differentiability properties of the mapping $\U \ni u \mapsto \J_u$ defined in \cref{eq:def_tracking_term}. In particular, we will show that the derivative mapping $D_{u} \mathcal{J}$ is completely continuous in the base point $u$. Far from a mere technicality, it is in fact a crucial assumption often employed in the convergence proofs of infinite-dimensional optimization algorithms that use iterative smoothing techniques to handle the typically nonsmooth risk measures $\mathcal{R}$, cf. \cite{Kouri2020,Kouri2021}. Unlike PDE-constrained optimization, where such compactness often follows implicitly from the inverse differential operators, establishing it in our setting is considerably more delicate.

In what follows, we understand $\J$ as an operator taking values in $L^1_\mu(\Theta)$, i.e., $\J\colon \U\to L^1_\mu(\Theta)$. We begin by studying the differentiability of the ensemble trajectory mapping $u\mapsto X^u \in C^0 \big( [0,T],L^{p_1}_\mu(\Theta) \big)$.
% , which is defined as the solution of \cref{eq:diff_eq_X} and collects the trajectories of the ensemble (see \cref{eq:pointwise_X}).
We note that the exponent $p_1\in (1,p_0/2]$ should match with the one appearing in the growth condition in \cref{eq:bound_a}. The fact that $p_1$ needs to be bounded from above by $p_0/2$ will be clarified in \cref{rmk:loss_integrab}.
Finally, we stress the fact that \cref{eq:diff_eq_X} admits a unique solution \emph{for every $u\in L^q([0,T], \R^k)$}. For this reason, when discussing the differentiability, we shall consider $L^q([0,T], \R^k) \ni u\mapsto X^u$.

\begin{assumption}[Smoothness Properties of the Vector Field $F$]\label{ass:fields_C1}
    The mappings $F_1,\ldots,F_k \colon \R^n\times \Theta \to \R^n$ defined in \eqref{eq:def_fields_maps} are $C^1$-regular in the first argument, their gradients $\nabla_x F_1,\ldots,\nabla_x F_k \colon \R^n\times \Theta \to \R^n$ are Borel measurable as functions of $(x,\theta)$, and  there exists $L'>0$ (independent of $x$ and $\theta$) such that 
\begin{equation*}
\sup_{i=1,\ldots,k} |\nabla_x F_i(x_1,\theta) - \nabla_x F_i(x_2,\theta)| \leq 
L' |x_1-x_2|
\end{equation*}
for every $x_1,x_2 \in \R^n$ and for $\mu$-a.e.~$\theta\in \Theta$.
\end{assumption}

For every $Y \in L^{p_0}_\mu(\Theta,\R^n)$, for every $u,v\in L^q([0,T],\R^k)$, we define the linearized field $\G_{u,v} \colon [0,T]\times L^{p_0}_\mu(\Theta,\R^n) \to L^{p_0}_\mu(\Theta,\R^n)$ as follows:
\begin{equation} \label{eq:def_diff_field_Banach}
    \G_{u,v} (t,Y)[\theta] \coloneqq 
    \left( 
    \sum_{i=1}^k u_i(t)\nabla_x F_i\big( X_t^u, \theta \big) \right)Y(\theta) 
    + \sum_{i=1}^k v_i(t) F_i\big( X_t^u, \theta \big)
\end{equation}
for a.e.~$t\in[0,T]$ and for $\mu$-a.e.~$\theta\in\Theta$, where $X^u\in W^{1,q}\big( [0,T], L^{p_0}_\mu(\Theta,\R^n) \big)$ is the unique solution of \cref{eq:diff_eq_X}.
We observe that the mapping $\G_{u,v}$ is affine in its second argument. 

\begin{proposition}[The Linearized Ensemble Trajectory] \label{prop:existence_diff_ODE_Banach}
    Let \cref{ass:fields,ass:cauchy,ass:fields_C1} hold. 
    Then, for every for every $u,v\in L^q([0,T],\R^k)$, the function $\G_{u,v}\colon [0,T]\times L^{p_0}_\mu(\Theta,\R^n) \to L^{p_0}_\mu(\Theta,\R^n)$ defined as in \eqref{eq:def_diff_field_Banach} is $L^q$-Carath\'eodory and $L^q$-Lipschitz (see \cref{def:carath}).
    Moreover, the Cauchy problem 
    \begin{equation} \label{eq:diff_eq_Y}
        \begin{cases}
            \dot Y_t^{u,v} = \G_{u,v}(t,Y_t^{u,v}) & \mbox{for a.e. }t\in [0,T],\\
            Y_0^{u,v} \equiv 0
        \end{cases}
    \end{equation}
    admits a unique solution $Y^{u,v}\in W^{1,q}\big( [0,T], L^{p_0}_\mu(\Theta,\R^n) \big)$.
\end{proposition}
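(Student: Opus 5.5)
The plan mirrors the proof of \cref{prop:existence_ODE_Banach}: verify that $\G_{u,v}$ is $L^q$-Lipschitz and $L^q$-Carath\'eodory in the sense of \cref{def:carath}, and then apply \cref{thm:existence_ODE} with initial datum $Y_0=0\in L^{p_0}_\mu(\Theta,\R^n)$.

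We begin with two pointwise bounds, read $\nabla_x F_i(X_t^u,\theta)$ as $\nabla_x F_i(X_t^u(\theta),\theta)$, and assume \cref{ass:fields,ass:fields_C1}. First, the global Lipschitz bound on $F_i$ with constant $L$ gives $|\nabla_x F_i(x,\theta)|\le L$ in operator norm for every $x$ and $\mu$-a.e.~$\theta$, since the directional derivatives are bounded by $L$. Second, \cref{ass:fields} gives the linear growth $|F_i(x,\theta)|\le C+L|x|$. We fix $u,v\in L^q$. By \cref{prop:existence_ODE_Banach}, $X^u$ is continuous with values in $L^{p_0}_\mu$, so $\sup_t\|X^u_t\|_{L^{p_0}_\mu}=:K<\infty$.

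\textbf{Lipschitz property.} Since $\G_{u,v}$ is affine in $Y$, for $Y_1,Y_2$ we get
\[
\|\G_{u,v}(t,Y_1)-\G_{u,v}(t,Y_2)\|_{L^{p_0}_\mu}\le kL\,|u(t)|\,\|Y_1-Y_2\|_{L^{p_0}_\mu}.
\]
The right-hand coefficient lies in $L^q([0,T])$.

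\textbf{Growth bound.} For every $Y$ we have
\[
\|\G_{u,v}(t,Y)\|_{L^{p_0}_\mu}\le kL|u(t)|\,\|Y\|_{L^{p_0}_\mu}+k|v(t)|\,(C+LK),
\]
which is an $L^q$ function of $t$ for each fixed $Y$. This also shows that $\G_{u,v}(t,Y)\in L^{p_0}_\mu$.

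\textbf{Measurability.} Joint measurability in $\theta$ of $\theta\mapsto\nabla_xF_i(X^u_t(\theta),\theta)Y(\theta)$ and of $\theta\mapsto F_i(X_t^u(\theta),\theta)$ comes from the Borel measurability of $\nabla_x F_i$ and $F_i$ composed with measurable maps. The step I expect to need the most care is strong (Bochner) measurability of $t\mapsto \G_{u,v}(t,Y)$. Unlike in \cref{prop:existence_ODE_Banach}, the coefficients now depend on $t$ through $X^u_t$, and not only through the scalar factors $u_i(t)$ and $v_i(t)$.

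I plan to show that $t\mapsto \nabla_xF_i(X^u_t(\cdot),\cdot)Y(\cdot)$ and $t\mapsto F_i(X^u_t(\cdot),\cdot)$ are continuous into $L^{p_0}_\mu$. For the second map, the Lipschitz bound gives $\|F_i(X^u_s,\cdot)-F_i(X^u_t,\cdot)\|_{L^{p_0}_\mu}\le L\|X^u_s-X^u_t\|_{L^{p_0}_\mu}$. For the first, \cref{ass:fields_C1} gives the pointwise bound $L'|X^u_s(\theta)-X^u_t(\theta)|\,|Y(\theta)|$. This product is only in $L^{p_0/2}_\mu$, so instead of using it directly I would argue by dominated convergence. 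Take $s_n\to t$ and pass to a subsequence along which $X^u_{s_n}\to X^u_t$ $\mu$-a.e. The integrand $|(\nabla_xF_i(X^u_{s_n})-\nabla_xF_i(X^u_t))Y|^{p_0}$ then tends to $0$ a.e., by continuity of $\nabla_x F_i$ in $x$, and it is dominated by $(2L|Y|)^{p_0}\in L^1_\mu$. Uniqueness of the limit then upgrades the subsequence statement to the full sequence.

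Products of these continuous maps with the measurable scalar functions $u_i(t)$ and $v_i(t)$ are strongly measurable, so $\G_{u,v}$ is $L^q$-Carath\'eodory. \cref{thm:existence_ODE} then yields a unique $Y^{u,v}\in W^{1,q}([0,T],L^{p_0}_\mu(\Theta,\R^n))$.
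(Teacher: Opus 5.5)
Your proposal is correct and takes the same route as the paper. The paper's proof only says that the argument for the existence proposition for $X^u$ carries over: Lipschitz and growth estimates for $\G_{u,v}$ in $L^{p_0}_\mu$, then the abstract existence theorem for $L^q$-Carath\'eodory, $L^q$-Lipschitz fields. Your bound $|\nabla_x F_i|\le L$ and your dominated-convergence proof that $t\mapsto \nabla_x F_i(X^u_t(\cdot),\cdot)Y(\cdot)$ is continuous in $L^{p_0}_\mu$ correctly fill in the details the paper leaves implicit. The continuity step matters in particular, since the earlier proof does not transfer verbatim once the coefficients depend on $t$ through $X^u_t$.
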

\begin{proof}
    The arguments follow the lines of the proof of \cref{prop:existence_ODE_Banach}. 
\end{proof}

\begin{remark}
    In this paper, we shall always consider the absolutely continuous representative $Y^{u,v} \in AC\big( [0,T], L^{p_0}_\mu(\Theta,\R^n) \big)$ of the solution of \eqref{eq:diff_eq_X}.
\end{remark}

Since for $\mu$-a.e.~$\theta\in\Theta$ we can consider, for every $u,v\in L^q([0,T],\R^k)$, the absolutely continuous curve $y_{u,v}^\theta \colon [0,T]\to\R^n$ that solves the linear inhomogeneous Cauchy problem 
\begin{equation} \label{eq:ODE_y}
    \begin{cases}
        \dot y_{u,v}^\theta(t)= \left( \sum_{i=1}^k u_i(t)\nabla_x F_i^\theta\big(x_u^\theta(t)\big)  \right)y_{u,v}^\theta(t) + \sum_{i=1}^k v_i(t)F_i^\theta\big(x_u^\theta(t) \big), \\
        y_{u,v}^\theta(0)= 0,
    \end{cases}
\end{equation}
by virtue of \cref{prop:existence_diff_ODE_Banach} we deduce that
\begin{equation} \label{eq:pointwise_Y}
    Y_t^{u,v}(\theta) = y_{u,v}^\theta(t)
\end{equation}
for every $t\in [0,T]$ and for $\mu$-a.e.~$\theta\in\Theta$.
Finally, owing to classical results (see, e.g., \cite[Theorem~2.2.3]{BressanPiccoli}), we recall that, for  $\mu$-a.e.~$\theta\in\Theta$ and for every $u,v\in L^q([0,T],\R^k)$, we can write $y_{u,v}^\theta$ as follows:
\begin{equation} \label{eq:y_fund_matrix}
    y_{u,v}^\theta(t) = M_u^\theta(t) \int_{[0,t]} M_u^\theta(s)^{-1} \sum_{i=1}^k v_i(s)F_i^\theta\big(x_u^\theta(s)\big) \, \diff s
\end{equation}
for every $t\in [0,T]$, where $t\mapsto M^\theta_u(t) \in \R^{n\times n}$ is the fundamental matrix map and its inverse $t\mapsto M_u^\theta(t)^{-1} \in \R^{n\times n}$ are matrix-valued curves that solve, respectively,
\begin{equation} \label{eq:fund_matrix}
    \begin{cases}
        \frac{\diff}{\diff t}M^\theta_u(t) = \left(  \sum_{i=1}^k u_i(t)\nabla_x F_i^\theta \big(x_u^\theta(t)\big) \right) M^\theta_u(t),\\
        M_u^\theta(0) = \mathrm{Id},
    \end{cases}
\end{equation}
and
\begin{equation} \label{eq:fund_matrix_inv}
    \begin{cases}
        \frac{\diff}{\diff t}M^\theta_u(t)^{-1} = -M^\theta_u(t)^{-1} \left(  \sum_{i=1}^k u_i(t)\nabla_x F_i^\theta \big(x_u^\theta(t)\big) \right) ,\\
        M_u^\theta(0)^{-1} = \mathrm{Id}.
    \end{cases}
\end{equation}
\begin{remark} \label{rmk:linear_Y}
    By combining \cref{eq:pointwise_Y,eq:y_fund_matrix}, we observe that $L^q([0,T],\R^k) \ni v \mapsto Y^{u,v}$ is linear. The boundedness of this operator can be deduced from \cref{lem:bound_traj,ass:fields_C1} with Gr\"onwall-like estimates. See later in \cref{eq:bound_fund_mat} in the proof of \cref{lem:conv_fund_mat} for further details.
\end{remark}

The first step consist in studying the differentiability of $u\mapsto x_u^\theta(t)$ when $\theta \in \Theta$ is fixed. We observe that, in what follows, we do not restrict the base point $u$ to vary in $\U$, but we establish the differentiability in the whole $L^q([0,T],\R^k)$.

\begin{lemma} \label{lem:traj_diff}
    Let \cref{ass:fields,ass:cauchy,ass:fields_C1} hold. 
    Then, for $\mu$-a.e. $\theta\in \Theta$ and for every $u,v\in L^q([0,T],\R^k)$ such that $\|v\|_{L^q}\leq 1$, we have that
    \begin{equation} \label{eq:traj_diff}
        \sup_{t\in [0,T]}|x_{u+v}^\theta(t) - x_u^\theta(t) - y_{u,v}^\theta(t)| 
        \leq \kappa_u(\theta) \|v\|^2_{L^1},
    \end{equation}
    for every $t\in [0,T]$, where
    \begin{equation} \label{eq:def_kappa}
    \begin{split}
        \kappa_u(\theta) \coloneqq 
        \bigg( 
        \frac{L'}2
&\left( C + L   
            \Big( | x_0(\theta) | + C \| u \|_{L^1}  \Big)
            \right)^2 
            e^{4L \| u \|_{L^1}} 
             \|u\|_{L^1}\\
            & \qquad +  L \left( C + L   
            \Big( | x_0(\theta) | + C \| u \|_{L^1}  \Big)
            \right) 
            e^{2L \| u \|_{L^1}}
        \bigg) e^{L'\|u\|_{L^1}}.
    \end{split}
\end{equation}
\end{lemma}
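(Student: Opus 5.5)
Fix $\theta$ outside the null set where \cref{ass:fields,ass:fields_C1} and \cref{lem:bound_traj} hold. Write $x_u=x_u^\theta$, $x_{u+v}=x_{u+v}^\theta$, $y=y^\theta_{u,v}$, and set
\[
r(t)\coloneqq x_{u+v}(t)-x_u(t)-y(t).
\]
We derive an integral equation for $r$ and close it with Gr\"onwall's lemma. The approach is to first bound the zeroth-order difference $\delta(t)\coloneqq x_{u+v}(t)-x_u(t)$, and then use a first-order Taylor expansion of $F$ whose remainder is quadratic in $\delta$.

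\emph{Step 1: a bound on $\delta$.} Write $\dot\delta = F(x_{u+v})(u+v) - F(x_u)u = \big(F(x_{u+v})-F(x_u)\big)(u+v) + F(x_u)v$. Combining the Lipschitz bound with $|F_i(x_u(s))|\le C+L|x_u(s)|$ and \cref{lem:bound_traj} gives
\[
|F_i(x_u(s))| \le \Big(C+L\big(|x_0|+C\|u\|_{L^1}\big)\Big)e^{L\|u\|_{L^1}} \eqqcolon B .
\]
Gr\"onwall then yields $\sup_t|\delta(t)|\le B\|v\|_{L^1}e^{L\|u+v\|_{L^1}}$. Since $\|v\|_{L^1}$ is controlled by $\|v\|_{L^q}\le 1$ (up to a $T$-dependent constant), $e^{L\|u+v\|_{L^1}}$ is bounded by a multiple of $e^{L\|u\|_{L^1}}$. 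This should explain the factors $e^{2L\|u\|}$ and $e^{4L\|u\|}$ (that is, $B^2$ times further exponentials) appearing in $\kappa_u$.

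\emph{Step 2: the equation for $r$.} Subtracting \eqref{eq:ODE_y}, we get
\[
\dot r = \sum_i u_i\big[F_i(x_{u+v})-F_i(x_u)-\nabla_xF_i(x_u)\delta\big] + \sum_i u_i\nabla_xF_i(x_u)\,r + \sum_i v_i\big[F_i(x_{u+v})-F_i(x_u)\big].
\]
By \cref{ass:fields_C1}, the first bracket is at most $\tfrac{L'}{2}|\delta|^2$. By \cref{ass:fields}, the last bracket is at most $L|\delta|$, and $|\nabla_xF_i|\le L$. Integrating with $r(0)=0$ gives
\[
|r(t)|\le \frac{L'}{2}\|\delta\|_\infty^2\|u\|_{L^1} + L\|\delta\|_\infty\|v\|_{L^1} + \int_0^t |u(s)|\,L\,|r(s)|\,\diff s .
\]
Gr\"onwall then produces an exponential factor. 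Inserting Step 1 gives $\|\delta\|_\infty^2\lesssim B^2\|v\|_{L^1}^2$ and $\|\delta\|_\infty\|v\|_{L^1}\lesssim B\|v\|_{L^1}^2$, which matches the two summands of $\kappa_u$ multiplied by the final exponential.

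\emph{Main obstacle.} The difficulty is matching the exact constants in \eqref{eq:def_kappa}. In particular, the paper's outer factor is $e^{L'\|u\|_{L^1}}$ rather than $e^{L\|u\|_{L^1}}$. This suggests that in Step 2 the authors bound $|\nabla_xF_i|$ differently, or accept $L\le L'$ up to relabeling; I would simply use whichever Lipschitz constant for $F$ is available. The other delicate point is absorbing $e^{L\|v\|_{L^1}}$ from Step 1 into $e^{L\|u\|_{L^1}}$ using $\|v\|_{L^q}\le 1$. If the exact form fails, I would settle for a $\kappa_u$ with the same structure, namely a polynomial in $|x_0(\theta)|$ times exponentials in $\|u\|_{L^1}$. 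That is all the later integrability arguments need, since the square of the $|x_0|$-dependent term lies in $L^{p_0/2}_\mu$.
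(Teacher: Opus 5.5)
Your plan is the paper's proof. The paper also sets $\xi^\theta_{u,v}=x^\theta_{u+v}-x^\theta_u-y^\theta_{u,v}$ and adds and subtracts $\nabla_xF_i^\theta(x_u^\theta)(x^\theta_{u+v}-x^\theta_u)$ to get your three terms. It bounds the Taylor remainder by $\frac{L'}{2}|x^\theta_{u+v}-x^\theta_u|^2$ and the $v$-term by Lipschitz continuity, takes your Step 1 from \cref{lem:lipsch_traj}, and closes with Gr\"onwall.

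The constant mismatch you flag is a defect of the stated $\kappa_u$, not of your argument.

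\emph{The $e^{2L\|u\|_{L^1}}$ factor.} This factor in \cref{eq:def_kappa} comes from \cref{lem:lipsch_traj}. Its proof bounds $|F_i^\theta(x^\theta_{u+v}(s))|$ by $C+L\sup_s|x^\theta_u(s)|$, which is unjustified. A correct Gr\"onwall argument, like yours, gives $e^{L\|u+v\|_{L^1}}\le e^{LT^{1-1/q}}e^{L\|u\|_{L^1}}$, and the extra factor cannot be dropped. Take $n=k=1$, $F(x,\theta)=x$ (so $L=C=1$ are admissible), $x_0\equiv 20$, $u=0$, $q=2$, $T=4$ and $v\equiv\tfrac12$. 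Then $\|v\|_{L^2}=1$, $\|v\|_{L^1}=2$, $x^\theta_v(t)=20e^{t/2}$ and $y^\theta_{0,v}(t)=10t$. At $t=4$ the left-hand side of \cref{eq:traj_diff} equals $20(e^2-3)\approx 87.8$, whereas $\kappa_0(\theta)\|v\|_{L^1}^2=21\cdot 4=84$. So the lemma as stated is false.

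\emph{The outer factor.} The factor $e^{L'\|u\|_{L^1}}$ would need $|\nabla_xF_i|\le L'$. \cref{ass:fields_C1} does not give this: it bounds the Lipschitz constant of $\nabla_xF_i$, not its size. The bound actually available is $|\nabla_xF_i|\le L$ from \cref{ass:fields}, which yields $e^{L\|u\|_{L^1}}$, as in your Step 2.

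So carry out your fallback rather than chasing the stated constant. Your $\kappa_u$ picks up extra $T$-dependent factors $e^{LT^{1-1/q}}$, and possibly a dimensional factor from $\sum_i|u_i|$. It is correct and still satisfies $\kappa_u(\theta)\lesssim(1+|x_0(\theta)|)^2$ with constants depending on $u$ and $T$. That is all \cref{rmk:loss_integrab} and \cref{prop:diff_Frechet} use.
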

\begin{proof}
    Let us fix $\theta \in \Theta$, $u \in \U$ and $v\in L^q([0,T], \R^k)$ with $\|v\|_{L^q} \leq 1$, and let us define $t \mapsto \xi_{u,v}^\theta(t)$ as $\xi_{u,v}^\theta(t) \coloneqq x_{u+v}^\theta(t) - x_u^\theta(t) - y_{u,v}^\theta(t)$, where $y_{u,v}^\theta$ solves the linear system in \cref{eq:ODE_y}. Then using $\xi^{\theta}_{u,v}(t) = \int_{0}^t\dot{\xi}^{\theta}_{u,v}(s) \diff s$ along with the definitions of $x^{\theta}_{u+v}, x^{\theta}_u$ and $y^{\theta}_{u,v}$, we compute
    \begin{equation*}
        \begin{split}
            \big| \xi_{u,v}^\theta(t) \big| & \leq 
            \int_{[0,t]} 
            \Bigg|  \sum_{i=1}^k \Big( F_i^\theta\big(x_{u+v}^\theta(s)\big) - F_i^\theta\big(x_{u}^\theta(s)\big) \Big) u_i(s)
             - \sum_{i=1}^k \nabla_x F_i^\theta\big( x_u^\theta(t) \big) u_i(s) y_{u,v}^\theta(s) \\
            & \qquad\quad
            + \sum_{i=1}^k \Big( F_i^\theta\big(x_{u+v}^\theta(s)\big) - F_i^\theta\big(x_{u}^\theta(s)\big) \Big) v_i(s)
            \Bigg|\,\diff s,
        \end{split}
    \end{equation*}
which, by adding and subtracting $\nabla_x F_i^\theta\big( x_u^\theta(s) \big) \big( x_{u+v}^\theta(s) - x_{u}^\theta(s) \big)$ for $i=1,\ldots,k$, yields
    \begin{equation} \label{eq:comput_xi}
        \begin{split}
            \big| \xi_{u,v}^\theta(t) \big| & \leq 
             \int_{[0,t]} \sum_{i=1}^k \Big| F_i^\theta\big(x_{u+v}^\theta(s)\big) - F_i^\theta\big(x_{u}^\theta(s)\big) - \nabla_x F_i^\theta\big( x_u^\theta(s) \big) \big( x_{u+v}^\theta(s) - x_{u}^\theta(s) \big) \Big| |u_i(s)| \,\diff s \\
& \qquad + \int_{[0,t]}  \sum_{i=1}^k \Big| F_i^\theta\big(x_{u+v}^\theta(s)\big) - F_i^\theta\big(x_{u}^\theta(s)\big) \Big| |v_i(s)|
\,\diff s  \\
& \qquad + \int_{[0,t]}  \sum_{i=1}^k   \Big| \nabla_x F_i^\theta\big( x_u^\theta(s) \big) \Big||u_i(s)| \big|\xi_{u,v}^\theta(s)\big| \,\diff s
        \end{split}
    \end{equation}
for every $t\in [0,T]$. Owing to \cref{ass:fields_C1}, we observe that for every $i=1,\ldots,k$ we have
\begin{equation*}
\begin{split}
\Big| F_i^\theta\big(x_{u+v}^\theta(s)\big)& -  F_i^\theta\big(x_{u}^\theta(s)\big)- \nabla_x F_i^\theta\big( x_u^\theta(s) \big) \big( x_{u+v}^\theta(s) - x_{u}^\theta(s) \big) \Big| \\
& \leq 
\int_0^1 \Big| \nabla_x F_i^\theta\big( x_u^\theta(s) + \tau(x_{u+v}^\theta(s) - x_{u}^\theta(s)) \big) -  \nabla_x F_i^\theta\big( x_u^\theta(s) \big)  \Big| \big| x_{u+v}^\theta(s) - x_{u}^\theta(s) \big| \,\diff \tau \\
& \leq \frac{L'}2  \big| x_{u+v}^\theta(s) - x_{u}^\theta(s) \big|^2
\end{split}
\end{equation*}
for every $s\in [0,T]$, so that, by virtue of \cref{lem:lipsch_traj}, we can bound the first term on the right-hand side of \cref{eq:comput_xi} by
\begin{equation} \label{eq:comput_xi_1}
\begin{split}
 \int_{[0,t]} \sum_{i=1}^k &\Big| F_i^\theta\big(x_{u+v}^\theta(s)\big) - F_i^\theta\big(x_{u}^\theta(s)\big) - \nabla_x F_i^\theta\big( x_u^\theta(s) \big) \big( x_{u+v}^\theta(s) - x_{u}^\theta(s) \big) \Big| |u_i(s)| \,\diff s\\
&  \leq \frac{L'}2
\left( C + L   
            \Big( | x_0(\theta) | + C \| u \|_{L^1}  \Big)
            \right)^2 
            e^{4L \| u \|_{L^1}} \|v\|_{L^1}
            \int_{[0,t]} \sum_{i=1}^k |u_i(s)| \,\diff s  
\end{split}
\end{equation}
for every $t\in [0,T]$. Moreover, owing to \cref{ass:fields} and \cref{lem:lipsch_traj}, we estimate the second integral on the right-hand side of \cref{eq:comput_xi} with
\begin{equation} \label{eq:comput_xi_2}
\begin{split}
\int_{[0,t]}  \sum_{i=1}^k &\Big| F_i^\theta\big(x_{u+v}^\theta(s)\big) - F_i^\theta\big(x_{u}^\theta(s)\big) \Big| |v_i(s)|
\,\diff s \\
& \leq 
L \left( C + L   
            \Big( | x_0(\theta) | + C \| u \|_{L^1}  \Big)
            \right) 
            e^{2L \| u \|_{L^1}} \|v\|_{L^1} \int_{[0,t]} \sum_{i=1}^k |v_i(s)| \, \diff s    
\end{split}
\end{equation}
for every $t\in [0,T]$.
Finally, using \cref{eq:comput_xi,eq:comput_xi_1,eq:comput_xi_2} and \cref{ass:fields_C1}, Gr\"onwall Lemma yields the statement.
\end{proof}
\begin{remark} \label{rmk:loss_integrab}
    We observe that in \cref{lem:traj_diff} the constant $\kappa$ that appears on the right-hand side of \cref{eq:traj_diff} satisfies $\kappa_u(\theta) \simeq |x_0(\theta)|^2$ for $\mu$-a.e.~$\theta \in \Theta$. Therefore, we deduce that $\kappa(\cdot) \in L^{p_0/2}_\mu(\Theta)$. 
    % This loss of integrability when computing the differential with respect to the control is to some extent expected. Indeed, similar phenomena occur in the case of controlled PDEs (see, e.g., CITE Here).
\end{remark} 

We are in a position to prove the Fr\'echet differentiability of the mapping $\U \ni u\mapsto X_\cdot^u \in C^0 \big( [0,T],L^{p_1}_\mu(\Theta) \big)$ for $p_1\leq p_0/2$.

\begin{proposition}[Differentiability of the Ensemble Control-to-State Mapping] \label{prop:diff_Frechet}
    Let \cref{ass:fields,ass:cauchy,ass:fields_C1} hold. For every $u\in L^q([0,T],\R^k)$, let $X_\cdot^u \in C^0 \big( [0,T],L^{p_0}_\mu(\Theta) \big)$ be the solution of \cref{eq:diff_eq_X}, which collects the trajectories of the ensemble (see \cref{eq:pointwise_X}).
    Then, if in \cref{ass:cauchy} we have $p_0\geq 2$ and if we consider $p_1\in (1, p_0/2]$, then $\U \ni u\mapsto X^u \in C^0 \big( [0,T],L^{p_1}_\mu(\Theta) \big)$ is Fr\'echet differentiable at every point $u\in \U$, and 
    \begin{equation} \label{eq:diff_Frechet}
        \sup_{t\in[0,T]} \left\| X^{u+v}_t(\cdot) -   X^{u}_t(\cdot)
        - Y^{u,v}_t(\cdot)\right\|_{L^{p_1}_\mu} \leq 
        \left\| \kappa_u(\cdot) \right\|_{L^{p_1}_\mu} \|v\|_{L^1}^2 
    \end{equation}
    for every $v \in L^q([0,T],\R^k)$ with $\| v\|_{L^q}\leq 1$, where $\kappa_u(\cdot) \in L^{p_0/2}_\mu(\Theta)$ is defined as in  \cref{eq:def_kappa}.
\end{proposition}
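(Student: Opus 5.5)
The plan is to lift the pointwise estimate of \cref{lem:traj_diff} to the Bochner setting by integrating in $\theta$. We then check separately that the candidate derivative $v\mapsto Y^{u,v}$ is a bounded linear operator into $C^0\big([0,T],L^{p_1}_\mu(\Theta)\big)$.

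\textbf{Step 1: the remainder estimate \cref{eq:diff_Frechet}.} Fix $u\in\U$ and $v\in L^q([0,T],\R^k)$ with $\|v\|_{L^q}\leq 1$. By \cref{eq:pointwise_X} and \cref{eq:pointwise_Y}, for every $t\in[0,T]$ and $\mu$-a.e.~$\theta$ we have
\[
X^{u+v}_t(\theta)-X^u_t(\theta)-Y^{u,v}_t(\theta)=x^\theta_{u+v}(t)-x^\theta_u(t)-y^\theta_{u,v}(t).
\]
\cref{lem:traj_diff} bounds its modulus by $\kappa_u(\theta)\|v\|_{L^1}^2$, uniformly in $t$. Taking the $L^{p_1}_\mu$ norm for each fixed $t$ and then the supremum over $t$ gives \cref{eq:diff_Frechet}.

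\textbf{Step 2: finiteness of $\|\kappa_u\|_{L^{p_1}_\mu}$.} Expanding \cref{eq:def_kappa}, we see that $\kappa_u(\theta)\le c_1(u)+c_2(u)|x_0(\theta)|^2$ with deterministic constants $c_1(u),c_2(u)$. Since $x_0\in L^{p_0}_\mu$ by \cref{ass:cauchy}, it follows that $|x_0|^2\in L^{p_0/2}_\mu$. Because $\mu$ is a probability measure and $p_1\le p_0/2$, the inclusion $L^{p_0/2}_\mu\subset L^{p_1}_\mu$ holds, so $\|\kappa_u\|_{L^{p_1}_\mu}<\infty$. This is exactly where the restriction $p_1\le p_0/2$ enters, in line with \cref{rmk:loss_integrab}. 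Since $\|v\|_{L^1}\le T^{1/q'}\|v\|_{L^q}$ by Hölder, the right-hand side of \cref{eq:diff_Frechet} is $o(\|v\|_{L^q})$.

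\textbf{Step 3: $v\mapsto Y^{u,v}$ is bounded and linear.} Linearity follows from \cref{rmk:linear_Y}. For boundedness I would use the representation \cref{eq:y_fund_matrix}. A Grönwall argument on \cref{eq:fund_matrix} and \cref{eq:fund_matrix_inv} gives
\[
|M^\theta_u(t)|,\ |M^\theta_u(t)^{-1}|\le e^{\Lambda\|u\|_{L^1}},
\]
where $\Lambda$ bounds $|\nabla_xF_i|$; this bound follows from the global Lipschitz constant $L$ in \cref{ass:fields}. Combining \cref{ass:fields} with \cref{lem:bound_traj} gives
\[
|F_i^\theta(x_u^\theta(s))|\le C+L\big(|x_0(\theta)|+C\|u\|_{L^1}\big)e^{L\|u\|_{L^1}}.
\]
Together these yield
\[
\sup_t|y^\theta_{u,v}(t)|\le \big(c_3(u)+c_4(u)|x_0(\theta)|\big)\|v\|_{L^1},
\]
whose right-hand side lies in $L^{p_0}_\mu\subset L^{p_1}_\mu$ as a function of $\theta$. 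Continuity of $t\mapsto Y^{u,v}_t$ with values in $L^{p_1}_\mu$ is inherited from $Y^{u,v}\in AC\big([0,T],L^{p_0}_\mu\big)$ given by \cref{prop:existence_diff_ODE_Banach}.

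The step I expect to need the most care is Step 2: tracking the exponentials in \cref{eq:def_kappa} to confirm that the $\theta$-dependence is only quadratic in $|x_0(\theta)|$, with every other factor deterministic. The rest is a direct integration of \cref{lem:traj_diff}.
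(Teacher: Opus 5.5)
Your proposal is correct and follows the paper's proof. The paper's argument is exactly your Step~1: it integrates the pointwise bound of \cref{lem:traj_diff} in $\theta$, via $\sup_t\|\cdot\|_{L^{p_1}_\mu}\le\|\sup_t|\cdot|\|_{L^{p_1}_\mu}$. Your Steps~2 and~3 spell out what the paper leaves to \cref{rmk:loss_integrab} (namely $\kappa_u\in L^{p_0/2}_\mu\subset L^{p_1}_\mu$) and to \cref{rmk:linear_Y} (linearity and boundedness of $v\mapsto Y^{u,v}$).
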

\begin{proof}
    Owing to the estimate provided by \cref{lem:traj_diff} for $\mu$-a.e.~$\theta\in \Theta$, it turns out that
        \begin{equation*}
        \begin{split}
        \sup_{t\in[0,T]} \left\| X^{u+v}_t(\cdot) -   X^{u}_t(\cdot)
        - Y^{u,v}_t(\cdot)\right\|_{L^{p_1}_\mu} &\leq 
         \left\| \sup_{t\in[0,T]} \Big| X^{u+v}_t(\cdot) -   X^{u}_t(\cdot)
        - Y^{u,v}_t(\cdot)
        \Big|\right\|_{L^{p_1}_\mu} \\
        &\leq 
        \left\| \kappa_u(\cdot) \right\|_{L^{p_1}_\mu} \|v\|_{L^1}^2 ,
        \end{split}
    \end{equation*}
    and this concludes the proof.
\end{proof}

From \cref{eq:diff_Frechet} we read that, when we evaluate the Fr\'echet differential $D_u X^{u}$ at $u$ in the direction $v$, we obtain $D_u X^{u}[v] = Y^{u,v}$.
The next step consists in showing that, for every sequence $(u_m)_m$ such that $u_m\weak_{L^q}u$ as $m\to\infty$, the sequence of the Fr\'echet differentials $(D_u X^{u_m})_m$ converges strongly to $D_u X^{u}$. 
%i.e., 
%\begin{equation}
%    \lim_{m\to \infty} 
    %\sup_{\|v\|_{L^q}\leq 1} \sup_{t\in[0,T]} \left\| D_u X_t^{u_n}[v] - D_u X_t^{u}[v] \right\|_{L^{p_1}_\mu}  =
%    \sup_{\|v\|_{L^q}\leq 1} \sup_{t\in[0,T]} \left\| Y^{u_m,v}_t(\cdot) - Y^{u,v}_t(\cdot) \right\|_{L^{p_1}_\mu} =0.
%\end{equation}
As done before for the differentiability, it is convenient to first study the convergence when $\theta$ is fixed.
To this end, we need an auxiliary lemma.

\begin{lemma} \label{lem:conv_fund_mat}
    Let \cref{ass:fields,ass:cauchy,ass:fields_C1} hold. 
    For $\mu$-a.e.~$\theta \in \Theta$ we have the following: 
    For every sequence of controls $(u_m)_{m}$ such that $u_m\weak_{L^q}u_\infty$ as $m\to\infty$, then 
    \begin{equation*}
        \lim_{m\to \infty} \sup_{t\in [0,T]} \big|M_{u_m}^\theta(t) - M_{u_\infty}^\theta(t)  \big| =
        \lim_{m\to \infty} \sup_{t\in [0,T]} \big|M_{u_m}^\theta(t)^{-1} - M_{u_\infty}^\theta(t)^{-1}  \big| = 0,
    \end{equation*}
    where $t\mapsto M_{u}^\theta(t)$ and $t\mapsto M_{u}^\theta(t)^{-1}$ solve \cref{eq:fund_matrix,eq:fund_matrix_inv}.
\end{lemma}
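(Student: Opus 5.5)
We fix $\theta$ outside the $\mu$-null set on which \cref{ass:fields,ass:fields_C1} or \cref{prop:unif_conv_map_X} may fail. We write $A_m(t)\coloneqq \sum_{i=1}^k u_{m,i}(t)\nabla_x F_i^\theta\big(x_{u_m}^\theta(t)\big)$ for $m\in\Next$. The plan is to treat \cref{eq:fund_matrix} as a linear control system driven by $u_m$ and to mimic the weak-to-strong argument behind \cref{prop:unif_conv_map_X}: uniform bounds, then equicontinuity and Ascoli--Arzel\`a, then identification of the limit.

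\textbf{Step 1: uniform bounds.} Since $u_m\weak u_\infty$ in $L^q$, the norms $\|u_m\|_{L^q}$ are bounded, and hence so are the $\|u_m\|_{L^1}$. By \cref{lem:bound_traj}, the trajectories $x_{u_m}^\theta$ stay in a fixed ball $B_R$. The Lipschitz bound in \cref{ass:fields} gives $|\nabla_x F_i^\theta|\le L$. Gr\"onwall applied to \cref{eq:fund_matrix,eq:fund_matrix_inv} then yields
\[
\sup_t|M_{u_m}^\theta(t)|,\ \sup_t|M_{u_m}^\theta(t)^{-1}|\le e^{L\sqrt{k}\|u_m\|_{L^1}}\le K,
\]
with $K$ independent of $m$.

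\textbf{Step 2: compactness.} The derivatives satisfy $|\dot M_{u_m}^\theta|\le K L |u_m(t)|$, which is bounded in $L^q$ uniformly in $m$. Hence the family $(M_{u_m}^\theta)_m$ is bounded in $W^{1,q}$, so it is equi-H\"older. By Ascoli--Arzel\`a, every subsequence has a further subsequence converging uniformly to some $\bar M$, and the same holds for the inverses.

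\textbf{Step 3: identification of the limit (the main obstacle).} We must pass to the limit in
\[
\int_0^t A_m(s)M_{u_m}(s)\,\diff s .
\]
We split $A_m M_{u_m}-A_\infty \bar M$ into three terms:
\[
\sum_i u_{m,i}\big[\nabla_xF_i(x_{u_m})-\nabla_xF_i(x_{u_\infty})\big]M_{u_m}
+\sum_i u_{m,i}\nabla_xF_i(x_{u_\infty})\big(M_{u_m}-\bar M\big)
+\sum_i (u_{m,i}-u_{\infty,i})\nabla_xF_i(x_{u_\infty})\bar M .
\]
The first term is bounded by $L'\sup_s|x_{u_m}-x_{u_\infty}|\,K\,\|u_m\|_{L^1}$, which tends to $0$ by \cref{prop:unif_conv_map_X}. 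The second is bounded by $L\sup_s|M_{u_m}-\bar M|\,\|u_m\|_{L^1}$, which tends to $0$. For the third, $s\mapsto \mathbf 1_{[0,t]}(s)\nabla_xF_i(x_{u_\infty}(s))\bar M(s)$ is bounded, hence lies in $L^{q'}$, so weak convergence makes its integral vanish for each fixed $t$. Therefore $\bar M$ solves \cref{eq:fund_matrix} with control $u_\infty$. By uniqueness of solutions to this linear ODE with $L^1$ coefficients, $\bar M=M_{u_\infty}^\theta$. The same splitting, applied to \cref{eq:fund_matrix_inv}, identifies the limit of the inverses. Alternatively, uniform convergence of the inverses follows directly from $M^{-1}_{u_m}-M^{-1}_{u_\infty}=M^{-1}_{u_m}(M_{u_\infty}-M_{u_m})M^{-1}_{u_\infty}$ together with the bounds of Step 1.

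\textbf{Conclusion.} Every subsequence has a further subsequence converging uniformly to the same limit, so the whole sequence converges, which gives the claim. The only genuinely delicate point is the weak-limit term in Step 3. There we rely on the uniform convergence of $x_{u_m}^\theta$ from \cref{prop:unif_conv_map_X}, together with the uniform convergence of $M_{u_m}$ obtained from compactness, so that every factor multiplying the weakly convergent $u_m$ converges strongly.
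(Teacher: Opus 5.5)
Your proposal is correct and follows essentially the same route as the paper. Both use a Gr\"onwall bound on $M_{u_m}^\theta$, equi-H\"older continuity from the $L^q$ bound on $\dot M_{u_m}^\theta$, Arzel\`a--Ascoli with the subsequence (Urysohn) principle, and identification of the limit by passing to the limit in the integral form of \eqref{eq:fund_matrix} via \cref{prop:unif_conv_map_X}, weak convergence of $u_m$, and uniqueness. Your three-term splitting just spells out the limit passage the paper leaves implicit, and the identity $M_{u_m}^{-1}-M_{u_\infty}^{-1}=M_{u_m}^{-1}(M_{u_\infty}-M_{u_m})M_{u_\infty}^{-1}$ is a slightly quicker way to handle the inverses than the paper's ``same arguments''.
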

\begin{proof}
    We detail the proof only for the convergence of $\big( M_{u_m}^\theta \big)_m$, as the other one follows from the same arguments. By virtue of \cref{ass:fields}, from the Gr\"onwall Lemma we obtain that 
    \begin{equation} \label{eq:bound_fund_mat}
        \big| M_{u}^\theta(t) \big| \leq e^{L k \|u\|_{L^1}}
    \end{equation}
    for every $u\in L^q([0,T],\R^k)$. 
    Moreover, recalling that $\big|\frac{\diff}{\diff t}M^\theta_u(t)\big| \leq kL e^{L k \|u\|_{L^1}}|u(t)|$ and using the mean value theorem, we can show that $t\mapsto M_{u}^\theta(t)$ is $(1-\frac1q)$-H\"older continuous, i.e.,
    \begin{equation} \label{eq:hoelder_fund_mat}
        \big| M_{u}^\theta(t) - M_{u}^\theta(s) \big|
        \leq kL e^{L k \|u\|_{L^1}} \|u\|_{L^q} |t-s|^{1-\frac1q}
    \end{equation}
    for every $u\in L^q([0,T],\R^k)$ and for every $s,t\in [0,T]$.\\
    We prove the statement by arguing with the Urysohn Lemma: For every subsequence $(M^\theta_{u_{m_j}})_j$, there exists a sub-subsequence (not relabelled) such that $M^\theta_{u_{m_j}} \to_{C^0} M^\theta_{u_\infty}$ as $j\to \infty$.
    Since $(u_{m_j})_j$ is a bounded sequence in $L^q$, owing to \cref{eq:bound_fund_mat,eq:hoelder_fund_mat} we apply Arzel\`a-Ascoli Theorem to deduce that the sequence $(M^\theta_{u_{m_j}})_j$ is pre-compact in the $C^0$-norm. Therefore, we consider a sub-subsequence such that $M^\theta_{u_{m_j}} \to_{C^0} \tilde M^\theta$ as $j\to \infty$. We are left to show that $\tilde M^\theta \equiv M^\theta_{u_\infty}$.
    To this end, by rewriting \cref{eq:fund_matrix} as an integral equation, we observe that
    \begin{equation*}
        \begin{split}
            M^\theta_{u_{m_j}}(t) &= \mathrm{Id} + 
            \int_{[0,t]}
            \left(  \sum_{i=1}^k u_{m_j, i}(s) \nabla_x F_i^\theta\big(x_{u_{m_j}}^\theta(s) \big) \right) M^\theta_{u_{m_j}}(s)
            \,\diff s
        \end{split}
    \end{equation*}
    for every $t\in [0,T]$. Moreover, leveraging on \cref{prop:unif_conv_map_X} and on \cref{ass:fields_C1}, and recalling that $M^\theta_{u_{m_j}} \to_{C^0} \tilde M^\theta$ and $u_{m_j}\weak_{L^q}u_\infty$ as $j\to \infty$, we can pass to the limit on the two sides of the last identity, yielding
    \begin{equation*}
        \begin{split}
            \tilde M^\theta(t) &= \mathrm{Id} + 
            \int_{[0,t]}
            \left( \sum_{i=1}^k u_{\infty, i}(s) \nabla_x F_i^\theta\big(x_{u_{\infty}}^\theta(s) \big)  \right)  \tilde M^\theta(s)
            \,\diff s
        \end{split}
    \end{equation*}
    for every $t\in [0,T]$. This implies that $\tilde M^\theta \equiv M^\theta_{u_\infty}$ on $[0,T]$, as they both solve \cref{eq:bound_fund_mat} with driving control $u_\infty$, and we conclude the proof.
\end{proof}

We are finally in a position to address the main result of this section, which is built on the following result. This establishes the compactness of the derivative of the ensemble control-to-state mapping.  In contrast to optimal control of partial differential equations, which rely on compactness properties of embeddings between Sobolev spaces and the inverse of elliptic operators, we need entirely different arguments to prove the main statement.

\begin{proposition}[Compactness of the Derivative of the Ensemble Control-to-State Mapping] \label{prop:strong_conv_diff}
    Let \cref{ass:fields,ass:cauchy,ass:fields_C1} hold. Let us assume that in \cref{ass:cauchy} we have $p_0> 2$, and consider $p_1\in (1, p_0/2]$. 
    For every $u,v\in L^q([0,T],\R^k)$, let $D_u X^{u}[v] \in C^0 \big( [0,T],L^{p_1}_\mu(\Theta) \big)$ be the Fr\'echet differential of the mapping $u\mapsto X^u \in C^0 \big( [0,T],L^{p_1}_\mu(\Theta) \big)$ computed at the point $u\in L^q([0,T],\R^k)$ and evaluated in the direction $v\in L^q([0,T],\R^k)$. Then, for every sequence of controls $(u_m)_{m}$ such that $u_m\weak_{L^q}u_\infty$ as $m\to\infty$, we have
    \begin{equation*}
    \lim_{m\to \infty} \, 
    \sup_{\|v\|_{L^q}\leq 1}  \,  \sup_{t\in[0,T]} \big\| D_u X_t^{u_m}[v] - D_u X_t^{u_\infty}[v] \big\|_{L^{p_1}_\mu} =0.
    \end{equation*}
    In particular, the mapping $u\mapsto X^u$ is continuously Fréchet differentiable.
\end{proposition}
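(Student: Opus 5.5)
The plan is to work pathwise first, for fixed $\theta$, using the representation \cref{eq:y_fund_matrix}, and then lift the estimate to $L^{p_1}_\mu$ via dominated convergence. Fix $\theta$ in the full-measure set where \cref{prop:unif_conv_map_X} and \cref{lem:conv_fund_mat} hold. For $\|v\|_{L^q}\le 1$ we write
\begin{equation*}
y^\theta_{u_m,v}(t)-y^\theta_{u_\infty,v}(t) = \big(M^\theta_{u_m}(t)-M^\theta_{u_\infty}(t)\big)\int_{[0,t]} M^\theta_{u_m}(s)^{-1}\Phi_m(s)v(s)\,\diff s + M^\theta_{u_\infty}(t)\int_{[0,t]}\Big(M^\theta_{u_m}(s)^{-1}\Phi_m(s)-M^\theta_{u_\infty}(s)^{-1}\Phi_\infty(s)\Big)v(s)\,\diff s,
\end{equation*}
where $\Phi_m(s)\coloneqq F^\theta\big(x^\theta_{u_m}(s)\big)\in\R^{n\times k}$.

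We bound the first term by $\sup_t|M^\theta_{u_m}-M^\theta_{u_\infty}|$ times a uniform bound on the integral. Using \cref{eq:bound_fund_mat} (for the inverse as well), \cref{lem:bound_traj} and \cref{ass:fields}, together with $\sup_m\|u_m\|_{L^q}<\infty$, the integral is bounded by $c\,(1+|x_0(\theta)|)\,T^{1-1/q}$. The second term is bounded by
\begin{equation*}
e^{Lk\|u_\infty\|_{L^1}}\,\sup_s\big|M^\theta_{u_m}(s)^{-1}\Phi_m(s)-M^\theta_{u_\infty}(s)^{-1}\Phi_\infty(s)\big|\,T^{1-1/q}.
\end{equation*}
The supremum in $s$ tends to zero by \cref{lem:conv_fund_mat}, by \cref{prop:unif_conv_map_X}, and by the Lipschitz continuity of $F$, which gives $|\Phi_m-\Phi_\infty|\le L\sqrt{k}\,|x^\theta_{u_m}-x^\theta_{u_\infty}|$. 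The key point is that \emph{the bound is uniform in $v$}: the weak convergence enters only through the base point, never through the direction, so no compactness in $v$ is needed. Hence
\begin{equation*}
\delta_m(\theta)\coloneqq \sup_{\|v\|_{L^q}\le 1}\sup_t\big|y^\theta_{u_m,v}(t)-y^\theta_{u_\infty,v}(t)\big|\to 0
\end{equation*}
for $\mu$-a.e.~$\theta$.

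Next we lift to $L^{p_1}_\mu$. Using \eqref{eq:pointwise_Y},
\begin{equation*}
\sup_{\|v\|\le1}\sup_t\|D_uX^{u_m}_t[v]-D_uX^{u_\infty}_t[v]\|_{L^{p_1}_\mu}\le\|\delta_m\|_{L^{p_1}_\mu},
\end{equation*}
where we interchange the sup and the norm as in the proof of \cref{prop:unif_conv_map_X}. The estimates above give $\delta_m(\theta)\le c\,(1+|x_0(\theta)|)$ with $c$ independent of $m$, since $\sup_m\|u_m\|_{L^1}<\infty$. This majorant lies in $L^{p_0}_\mu\subset L^{p_1}_\mu$, so dominated convergence yields the claim. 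One must check measurability of $\delta_m$. This can be handled by restricting $v$ to a countable dense subset of the unit ball of $L^q$, which is legitimate since $v\mapsto y^\theta_{u,v}(t)$ is continuous.

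For continuous Fréchet differentiability, we apply the same argument to a strongly convergent sequence $u_m\to u_\infty$; such a sequence is in particular weakly convergent. This gives $\|D_uX^{u_m}-D_uX^{u_\infty}\|_{\mathcal L(L^q, C^0([0,T],L^{p_1}_\mu))}\to0$, i.e., sequential and hence norm continuity of $u\mapsto D_uX^u$. The main obstacle is obtaining the pathwise bound uniformly in $v$ with a $v$-independent integrable majorant; the Hölder estimate $\|v\|_{L^1}\le T^{1-1/q}$ combined with the factorization through the fundamental matrices is what makes this work. Note that only integrability $|x_0|\in L^{p_1}_\mu$ is needed here, so $p_1\le p_0/2$ is more than enough.
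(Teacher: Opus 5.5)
Your proposal is correct and follows essentially the paper's proof. Both arguments use the pathwise fundamental-matrix representation \eqref{eq:y_fund_matrix} to obtain a bound uniform in $\|v\|_{L^q}\le 1$ that vanishes by \cref{lem:conv_fund_mat} and \cref{prop:unif_conv_map_X}, dominate it by an $m$- and $v$-independent multiple of $C+L|x_0(\theta)|$, and conclude by dominated convergence in $L^{p_1}_\mu$; the paper splits into three terms where you use two, and your handling of measurability of the supremum and of norm-continuity of $u\mapsto D_uX^u$ fills in details the paper leaves implicit.
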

\begin{proof}
    From \cref{eq:diff_Frechet} we read that $D_u X_\cdot^{u}[v] = Y_\cdot^{u,v}$ for every $u,v\in L^q([0,T],\R^k)$, so that we can rephrase the statement as follows:
    \begin{equation*} %\label{eq:strong_conv_diff}
        \lim_{m\to \infty} \, 
    \sup_{\|v\|_{L^q}\leq 1}  \,
    \sup_{t\in[0,T]} \big\| Y^{u_m,v}_t(\cdot) - Y^{u_\infty,v}_t(\cdot) \big\|_{L^{p_1}_\mu} =0.
    \end{equation*}
    Moreover, for $\mu$-a.e.~$\theta \in \Theta$, we have that $Y^{u_m,v}_t(\theta) = y_{u_m,v}^\theta(t)$ for every $m\in \NN\cup \{\infty\}$, for every $v\in L^q([0,T],\R^k)$ and for every $t\in [0,T]$. Hence, by using \eqref{eq:y_fund_matrix} we compute
    \begin{equation} \label{eq:comput_y_conv}
        \begin{split}
            \big| y_{u_m,v}^\theta(t) - & y_{u_\infty,v}^\theta(t) \big|  = \bigg| M_{u_m}^\theta(t) \int_{[0,t]}  M_{u_m}^\theta(s)^{-1} \sum_{i=1}^k F_i^\theta \big(x_{u_m}(s) \big)v_i(s)
            \,\diff s \\
            &  \qquad \qquad \qquad -
            M_{u_\infty}^\theta(t) \int_{[0,t]}  M_{u_\infty}^\theta(s)^{-1} \sum_{i=1}^k F_i^\theta \big(x_{u_\infty}(s) \big)v_i(s)
            \,\diff s \bigg| \\
            & \leq \big| M_{u_m}^\theta(t) - M_{u_\infty}^\theta(t) \big| \int_{[0,t]}  \big|M_{u_m}^\theta(s)^{-1}\big| \sum_{i=1}^k \big| F_i^\theta \big(x_{u_m}(s) \big) \big| \big|v_i(s) \big|
            \,\diff s \\
            & \quad + \big| M_{u_\infty}^\theta(t) \big| \int_{[0,t]}  \big|M_{u_m}^\theta(s)^{-1} - M_{u_\infty}^\theta(s)^{-1} \big| \sum_{i=1}^k \big| F_i^\theta \big(x_{u_m}(s) \big) \big| \big|v_i(s) \big|
            \,\diff s \\
            & \quad + \big| M_{u_\infty}^\theta(t) \big| \int_{[0,t]}  \big| M_{u_\infty}^\theta(s)^{-1} \big| \sum_{i=1}^k \big| F_i^\theta \big(x_{u_m}(s) \big) - F_i^\theta \big(x_{u_\infty}(s) \big) \big| \big|v_i(s) \big|
            \,\diff s
        \end{split}
    \end{equation}
for every $m\in \NN$, for every $v\in L^q([0,T],\R^k)$ and for every $t\in [0,T]$. 
Recalling that $\| w \|_{L^1([0,T])} \leq T^{1-1/q} \| w\|_{L^q([0,T])}$, since $u_m \weak_{L^q}u_\infty$, there exists $r$ such that $\| u_m \|_{L^1}\leq r$ for every $m\in \NN \cup \{\infty\}$.
Therefore, by taking the supremum with respect to $v$ and $t$ in \cref{eq:comput_y_conv} and recalling that $\|v\|_{L^1} \leq T^{1-\frac1p} \|v\|_{L^q}$, we get
\begin{equation*}
    \begin{split}
    &\sup_{\| v\|_{L^q}\leq 1} \, \sup_{t \in [0,T]} 
    \big| y_{u_m,v}^\theta(t) -  y_{u_\infty,v}^\theta(t) \big| \\
    &\leq e^{(L'+L)r} \big( C + L |x_0(\theta)| \big) T^{1-\frac1p} \bigg( \sup_{t \in [0,T]} \big| M^\theta_{u_m}(t) - M^\theta_{u_\infty}(t) \big| + 
    \sup_{t \in [0,T]} \big| M^\theta_{u_m}(t)^{-1} - M^\theta_{u_\infty}(t)^{-1} \big| \bigg) \\
    &\quad  + e^{2L'r} L T^{1-\frac1p} \sup_{t \in [0,T]} \big| x^\theta_{u_m}(t) - x^\theta_{u_\infty}(t) \big|
    \end{split}
\end{equation*}
for every $m\in \NN$, and for $\mu$-a.e.~$\theta \in \Theta$, where we used \cref{lem:bound_traj}, \cref{ass:fields}, and \cref{eq:bound_fund_mat}. 
On the one hand, using again \cref{lem:bound_traj} and \cref{eq:bound_fund_mat}, the last identity implies that 
\begin{equation} \label{eq:y_bound_Lebesgue}
    \sup_{\| v\|_{L^q}\leq 1} \, \sup_{t \in [0,T]} 
    \big| y_{u_m,v}^\theta(t) -  y_{u_\infty,v}^\theta(t) \big|
    \leq e^{(2L'+L)r}  T^{1-\frac1p} 2(2 + L) \big( C + L |x_0(\theta)| \big)
\end{equation}
for every $m\in \NN$, and for $\mu$-a.e.~$\theta \in \Theta$. 
On the other hand, by virtue of the convergences established in \cref{prop:unif_conv_map_X,lem:conv_fund_mat}, we deduce that
\begin{equation} \label{eq:y_conv_Lebesgue}
           \lim_{m\to \infty} \, 
    \sup_{\|v\|_{L^q}\leq 1}  \,
    \sup_{t\in[0,T]} \big| y_{u_m,v}^\theta(t) -  y_{u_\infty,v}^\theta(t) \big|
    =\lim_{m\to \infty} \, 
    \sup_{\|v\|_{L^q}\leq 1}  \,
    \sup_{t\in[0,T]} \big| Y^{u_m,v}_t(\theta) - Y^{u_\infty,v}_t(\theta) \big| =0
\end{equation}
for $\mu$-a.e.~$\theta \in \Theta$. Then, recalling that the right-hand side of \cref{eq:y_bound_Lebesgue} is a function in $L^{p_0}_\mu(\Theta) \subset L^{p_1}_\mu(\Theta)$, using Lebesgue convergence theorem, we obtain that 
\begin{equation*}
    \lim_{m\to \infty} \, 
    \sup_{\|v\|_{L^q}\leq 1}  \,
    \sup_{t\in[0,T]} \big\| Y^{u_m,v}_t(\cdot) - Y^{u_\infty,v}_t(\cdot) \big\|_{L^{p_1}_\mu} \leq 
        \lim_{m\to \infty} \, \big\| 
    \sup_{\|v\|_{L^q}\leq 1}  \,
    \sup_{t\in[0,T]}  Y^{u_m,v}_t(\cdot) - Y^{u_\infty,v}_t(\cdot) \big\|_{L^{p_1}_\mu} = 0,
\end{equation*}
and this concludes the proof.
\end{proof}

Building on the theory above, we are finally ready to establish the main result of this section.

\begin{theorem}[Differentiability and Compactness Properties of $\mathcal{J}_u$ and $D_u \mathcal{J}_u$] \label{thm:frechet_diff}
    Let \cref{ass:fields,ass:cauchy,ass:risk_ingred,ass:fields_C1} hold. For $\mu$-a.e.~$\theta \in \Theta$ and for every $u\in L^q([0,T], \R^k)$, let $\J_u \in L^1_\mu(\Theta,\R)$ be defined as
    \begin{equation*}
        \J_u \coloneqq \int_{[0,T]} \cdot \ \diff \nu(t) \circ A \circ X^u, \qquad \J_u(\theta) \coloneqq \int_{[0,T]} a\big(t,x_u^\theta(t),\theta \big) \,\diff \nu(t),
    \end{equation*}
    where $A\colon C^0 \big([0,T], L^{p_1}_\mu(\Theta,\R^n) \big) \to C^0 \big([0,T], L^{1}_\mu(\Theta,\R) \big)$ is the Nemytskij operator defined in \cref{eq:def_nemytskij_A} (see also \cref{rmk:restriction_Nem} for its restriction) and $\int_{[0,T]} \cdot \ \diff \nu(t)$ was introduced in \cref{eq:def_integr_op}.
    Then, $u\mapsto \J_u$ is continuously Fréchet differentiable, and for $\mu$-a.e.~$\theta \in \Theta$ and for every $v\in L^q([0,T], \R^k)$ we have:
    \begin{equation*}
        D_u\J_u[v] = \int_{[0,T]} \cdot \ \diff \nu(t) \circ A'(X^u) \big[D_u X^u[v] \big], \quad
        D_u\J_u[v](\theta) = \int_{[0,T]} D_x a\big(t,x_u^\theta(t),\theta \big) \cdot y^\theta_{u,v}(t) \, \diff \nu(t),
    \end{equation*}
    where $y^\theta_{u,v}\colon [0,T] \to \R^n$ is the trajectory described in \cref{eq:y_fund_matrix} that solves \cref{eq:ODE_y}, and where $\left( A'(Z)[\zeta] \right)_t = D_x A(Z)_t [\zeta_t]$ for every $\zeta \in C^0 \big([0,T], L^{p_1}_\mu(\Theta,\R^n) \big)$,
    with the Nemytskij operator $D_x A\colon C^0\big([0,T], L^{p_1}_\mu(\Theta,\R^n) \big) \to C^0\big([0,T], \mathcal Y)$ defined as in \cref{eq:def_nemytskij_der_A}.\\
    Finally, for every $(u_m)_m$ such that $u_m\weak_{L^q} u_\infty$ as $m \to \infty$, we have that the Fréchet differentials $\big( D_u\J_{u_m} \big)_m$ strongly converge to $D_u\J_{u_\infty}$, i.e.,
    \begin{equation*}
       \lim_{m\to \infty} \sup_{\|v\|_{L^q}\leq 1} \big\| D_u\J_{u_m}[v] - D_u\J_{u_\infty}[v]   \big\|_{L^1_\mu} =0.
    \end{equation*}
\end{theorem}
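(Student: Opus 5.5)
The proof is a chain-rule argument for the composition $u\mapsto X^u \mapsto A(X^u) \mapsto \langle \nu, A(X^u)\rangle$, followed by a separate argument for the complete continuity of the derivative.

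\textbf{Differentiability.} By \cref{prop:diff_Frechet,prop:strong_conv_diff}, $u\mapsto X^u\in C^0\big([0,T],L^{p_1}_\mu(\Theta,\R^n)\big)$ is continuously Fréchet differentiable, with $D_uX^u[v]=Y^{u,v}$. By \cref{prop:diff_nemytskij_A}, $A$ is continuously Fréchet differentiable on $L^\infty\big([0,T],L^{p_1}_\mu\big)$. By \cref{rmk:restriction_Nem}, $A$ maps continuous curves to continuous curves. I would also check that $A'(Z)[\zeta]$ is continuous in $t$ whenever $Z,\zeta$ are. This follows from the continuity of $t\mapsto D_xa(t,\xi,\cdot)\in L^{p_1'}_\mu$ and of $\xi\mapsto D_xa(t,\xi,\cdot)$ established in the proof of \cref{prop:diff_nemytskij_A}, together with a uniform-continuity argument on the compact set $\{Z_t\}_{t\in[0,T]}$. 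The operator $\langle\nu,\cdot\rangle$ is bounded and linear by \cref{lem:bochner_operator}, so it is its own derivative. The chain rule then gives continuous Fréchet differentiability of $u\mapsto\J_u$ and the abstract formula $D_u\J_u[v]=\langle\nu, A'(X^u)[Y^{u,v}]\rangle$. For the pointwise formula, I would evaluate at $\theta$ using \cref{eq:pointwise_X,eq:pointwise_Y}. Bochner integrals commute with evaluation $\mu$-a.e.: pair with $L^\infty_\mu$ test functions and apply Fubini, which is legitimate since the integrand is dominated in $L^1(\nu\otimes\mu)$ by \cref{eq:bound_der_a} and \cref{lem:bound_traj}.

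\textbf{Complete continuity of the derivative.} Let $u_m\weak u_\infty$. I split
\begin{equation*}
D_u\J_{u_m}[v]-D_u\J_{u_\infty}[v]=\big\langle\nu, D_xA(X^{u_m})[Y^{u_m,v}-Y^{u_\infty,v}]\big\rangle+\big\langle\nu,\big(D_xA(X^{u_m})-D_xA(X^{u_\infty})\big)[Y^{u_\infty,v}]\big\rangle .
\end{equation*}
Hölder's inequality ($L^{p_1'}\times L^{p_1}\to L^1$) and \cref{lem:bochner_operator} bound the first term by
\begin{equation*}
\nu([0,T])\,\sup_t\|D_xa(t,X^{u_m}_t,\cdot)\|_{L^{p_1'}_\mu}\,\sup_t\|Y^{u_m,v}_t-Y^{u_\infty,v}_t\|_{L^{p_1}_\mu}.
\end{equation*}
The first supremum is uniformly bounded in $m$ by \cref{eq:bound_der_a}, \cref{lem:bound_traj} and $\sup_m\|u_m\|_{L^1}<\infty$. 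The second supremum tends to zero uniformly over $\|v\|_{L^q}\le1$ by \cref{prop:strong_conv_diff}. The second term is bounded by
\begin{equation*}
\nu([0,T])\,\sup_t\|D_xa(t,X^{u_m}_t,\cdot)-D_xa(t,X^{u_\infty}_t,\cdot)\|_{L^{p_1'}_\mu}\,\sup_{\|v\|_{L^q}\le1}\sup_t\|Y^{u_\infty,v}_t\|_{L^{p_1}_\mu}.
\end{equation*}
The last factor is finite by \cref{rmk:linear_Y}.

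\textbf{Main obstacle.} The step I expect to be hardest is showing
\begin{equation*}
\sup_t\|D_xa(t,X^{u_m}_t,\cdot)-D_xa(t,X^{u_\infty}_t,\cdot)\|_{L^{p_1'}_\mu}\to0 .
\end{equation*}
The plan is to work pointwise. By \cref{eq:lip_a_der}, for $\mu$-a.e. $\theta$,
\begin{equation*}
\sup_t|D_xa(t,x^\theta_m(t),\theta)-D_xa(t,x^\theta_\infty(t),\theta)|\le 2L_{\mathrm{der}}(R(\theta),\theta)\,\sup_t|x^\theta_m(t)-x^\theta_\infty(t)|,
\end{equation*}
where $R(\theta)=(|x_0(\theta)|+Cr)e^{Lr}$ is the uniform bound from \cref{lem:bound_traj}. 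The right-hand side tends to zero pointwise by \cref{prop:unif_conv_map_X}. It is dominated by $4L_{\mathrm{der}}(R,\theta)R(\theta)\lesssim R^{p_1-1}+g^{\mathrm{Lip}}_{\mathrm{der}}R$, and I need this dominating function in $L^{p_1'}_\mu$. The term $R^{p_1-1}$ is in $L^{p_1'}_\mu$ because $R\in L^{p_0}_\mu\subset L^{p_1}_\mu$. For $g^{\mathrm{Lip}}_{\mathrm{der}}R$, the exponents in \cref{ass:risk_ingred} are chosen exactly for this: $\frac{p_1-2}{p_1}+\frac1{p_1}=\frac{p_1-1}{p_1}$, so Hölder's inequality puts the product in $L^{p_1'}_\mu$. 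Dominated convergence applied to the supremum over $t$ then gives uniform-in-$t$ convergence, which completes the proof.
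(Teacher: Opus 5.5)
Your proposal is correct and follows the paper's route: the chain rule for $\langle \nu,\cdot\rangle\circ A\circ X^{\cdot}$, followed by a two-term splitting of $D_u\J_{u_m}[v]-D_u\J_{u_\infty}[v]$ in which \cref{prop:strong_conv_diff} handles the $Y$-difference. You attach the index $m$ to the other factor than the paper does, which makes no difference.

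The one real deviation is your ``main obstacle''. The paper does not run a separate dominated-convergence argument there. It combines the continuity of $A'$ (the local Lipschitz bound of \cref{lem:lipschitz_nemytskij_der_A}, already contained in the \cref{prop:diff_nemytskij_A} you cite) with $X^{u_m}\to X^{u_\infty}$ in $C^0([0,T],L^{p_1}_\mu)$ from \cref{prop:unif_conv_map_X}. Your direct argument, with the dominator built from $R(\theta)$ and $g^{\mathrm{Lip}}_{\mathrm{der}}$, is also valid. Your extra care in restricting $A$ and $A'$ to $C^0$ curves, needed because $\nu$ may have atoms, fills in a point the paper passes over.
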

\begin{proof}
The fact that $u\mapsto \J_u$ is continuously Fréchet differentiable follows from \cref{prop:diff_nemytskij_A,prop:diff_Frechet} and from the chain rule for Fréchet differentials (see, e.g., \cite[Proposition~1.2]{ambrosetti1995primer}), and the representation of $D_u\J_u$ follows directly from the composition of the differentials.
We are left to show the strong continuity of $D_u\J_u$ when evaluated along a weakly convergent sequence of controls. To this end, we preliminary recall that if $u_m\weak_{L^p} u_\infty$ as $m\to\infty$, then
\begin{equation*}
    \lim_{m\to \infty} \left \| X^{u_m}_t -X^{u_\infty}_t \right\|_{C^0([0,T],L^{p_1}_\mu)} =0, \qquad  \lim_{m\to \infty} \, 
    \sup_{\|v\|_{L^q}\leq 1}  \big\|(D_u X^{u_m} - D_u X^{u_\infty})[v]\|_{C^0([0,T],L_\mu^{p_1})},
\end{equation*}
as established in \cref{prop:unif_conv_map_X,prop:strong_conv_diff}, respectively (recall that $p_0>p_1$ for the first convergence).
Hence, we compute
\begin{equation*}
    \begin{split}
        \sup_{\|v\|_{L^q}\leq 1} \big\| D_u\J_{u_m}[v] - D_u\J_{u_\infty}[v]   \big\|_{L^1_\mu} &= 
         \sup_{\|v\|_{L^q}\leq 1}
          \left\| A'(X^{u_m})\big[ D_uX^{u_m} [v] \big] - A'(X^{u_\infty})\big[ D_uX^{u_\infty} [v] \big]   \right\|_{L^1_\mu} \\
          & \leq \tnorm{A'(X^{u_m}) - A'(X^{u_\infty})} \sup_{\|v\|_{L^q}\leq 1} \big\|D_u X^{u_m}[v] \big\|_{C^0([0,T],L_\mu^{p_1})} \\
          & \quad + \tnorm{A'(X^{u_\infty})} \sup_{\|v\|_{L^q}\leq 1}  \big\|(D_u X^{u_m} - D_u X^{u_\infty})[v]\|_{C^0([0,T],L_\mu^{p_1})},
    \end{split}
\end{equation*}
where, for every $Z \in C^0 \big([0,T], L^{p_1}_\mu(\Theta,\R^n) \big)$, $\tnorm{A'(Z)}$ denotes the operator norm of the Fréchet differential of $A$ evaluated at $Z$. Recalling that $A$ is continuously Fréchet differentiable (see again \cref{prop:diff_nemytskij_A}), the last part of the thesis follows from the convergences and the estimate written above.
\end{proof}

\section{First-Order Optimality Conditions}\label{sec:foopt}
We are now ready to derive first-order optimality conditions for the main risk-averse optimal control problem, which takes the form:
\[
\min_{u \in \U} \mathcal{R}(\mathcal{J}(X^u)) + \alpha \rho(u)
\]
for $\alpha>0$.
The next result is a direct consequence of the discussion in \cite{ks2018existence}. We provide below a short proof. Moreover, we refer the reader to \cite{Bonalli2023} for first-order optimality conditions for risk-averse optimal control of SDEs. 
We recall that the definition of the tangent (contingent) cone to $u^{\star}$ in $\mathcal{U}$ is given by
\[
T_{\U}(u^{\star}) \coloneqq \left\{d \in L^{q}([0,T],\mathbb R^k) \left|\, \exists 
\{d_k\},\, d_k \to d,\, \exists 
\{\tau_k\} \tau_k \downarrow 0 : u^{\star}+\tau_k d_k \in \mathcal{U}\right.\right\}. 
\]
\begin{theorem}[Primal First-Order Optimality Conditions]\label{thm:pfoop}
Let \cref{ass:fields,ass:cauchy,ass:fields_C1,ass:risk_ingred,ass:running_cost,ass:sigma_1} hold. Then the following primal optimality condition holds for a minimizer $u^{\star}$.
\begin{equation}\label{eq:primal-opt}
\sup_{\vartheta \in \partial\cR(\J_{u^{\star}})} \mathbb E_{\mu}[\vartheta D_{u}\J_{u^{\star}}[\delta u]] + \alpha \rho'(u^{\star};\delta u) \ge 0 \quad \forall \delta u \in T_{\mathcal{U}}(u^{\star})
\end{equation}
\end{theorem}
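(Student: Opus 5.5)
The plan is to show that the composite objective $\I(u)=\cR(\J_u)+\alpha\rho(u)$ is Hadamard directionally differentiable at $u^\star$, with the left-hand side of \eqref{eq:primal-opt} as its directional derivative. The condition then follows from minimality along tangent directions. We may assume $\I(u^\star)<\infty$, since otherwise the problem is trivial. We also need $\J_{u^\star}$ to lie in $\mathrm{int}(\mathrm{dom}\,\cR)$. I expect to get this from monotonicity and finiteness on constants in \cref{ass:sigma_1}, or simply to assume it, as is done in \cite{ks2018existence}.

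On the interior of its domain the proper convex lsc functional $\cR$ on the Banach space $L^1_\mu(\Theta)$ is continuous, hence locally Lipschitz. Its subdifferential $\partial\cR(\J_{u^\star})\subset L^\infty_\mu(\Theta)$ is then nonempty, convex and weak$^*$ compact. Moreover the directional derivative satisfies
\[
\cR'(\J_{u^\star};h)=\max_{\vartheta\in\partial\cR(\J_{u^\star})}\mathbb E_\mu[\vartheta h],
\]
and $\cR$ is Hadamard directionally differentiable there, because local Lipschitz continuity upgrades directional to Hadamard differentiability. Monotonicity gives $\vartheta\geq 0$, which is not needed here. Similarly, by \cref{ass:running_cost}(i)--(v), $\rho$ is convex and finite on $L^q$. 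It is therefore continuous, locally Lipschitz and Hadamard directionally differentiable, with derivative $\rho'(u^\star;\cdot)$.

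Next, fix $\delta u\in T_\U(u^\star)$, with $d_k\to\delta u$ and $\tau_k\downarrow 0$ such that $u_k\coloneqq u^\star+\tau_kd_k\in\U$. By \cref{thm:frechet_diff}, $u\mapsto\J_u\in L^1_\mu$ is Fréchet differentiable. Hence
\[
\J_{u_k}=\J_{u^\star}+\tau_k D_u\J_{u^\star}[\delta u]+o(\tau_k),
\]
where we use $\tau_k D_u\J_{u^\star}[d_k-\delta u]=o(\tau_k)$ by boundedness of the derivative. The chain rule for a Hadamard directionally differentiable outer map composed with a Fréchet differentiable inner map then yields
\[
\lim_{k\to\infty}\frac{\cR(\J_{u_k})-\cR(\J_{u^\star})}{\tau_k}=\cR'\big(\J_{u^\star};D_u\J_{u^\star}[\delta u]\big).
\]
Concretely, local Lipschitz continuity of $\cR$ near $\J_{u^\star}$ absorbs the $o(\tau_k)$ remainder. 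The same argument applied to $\rho$ gives $(\rho(u_k)-\rho(u^\star))/\tau_k\to\rho'(u^\star;\delta u)$.

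Finally, optimality gives $\I(u_k)-\I(u^\star)\geq 0$. Dividing by $\tau_k$ and passing to the limit yields
\[
\max_{\vartheta\in\partial\cR(\J_{u^\star})}\mathbb E_\mu\big[\vartheta\,D_u\J_{u^\star}[\delta u]\big]+\alpha\rho'(u^\star;\delta u)\geq 0,
\]
which is \eqref{eq:primal-opt}. The main obstacle is the interior/continuity issue: we must ensure that $\J_{u^\star}$ lies in the interior of $\mathrm{dom}\,\cR$, so that $\partial\cR$ is nonempty and the max formula holds. If this is not automatic, I would first try to exploit monotonicity together with finiteness on constants. Failing that, I would add it as a standing hypothesis, which holds, for instance, for finite-valued risk measures such as AVaR.
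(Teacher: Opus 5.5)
Your route is essentially the paper's. There, the result is taken from \cite[Prop.~3.13]{ks2018existence} after observing that $\cR$ is Hadamard directionally differentiable with the max formula \cite[Prop.~2.126]{Bonnans2000}, combined via the chain rule with the Fréchet differentiability of $u\mapsto\J_u$ from \cref{thm:frechet_diff}. You carry out the limit along $u^\star+\tau_k d_k$ by hand, and those steps are fine.

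The genuine gap is the one you flag yourself: nothing in your argument shows that $\cR$ is continuous at $\J_{u^\star}$. Without that, $\partial\cR(\J_{u^\star})$ may be empty and the max formula is unavailable. The case $\I(u^\star)=+\infty$ is also not ``trivial''. In that case $\partial\cR(\J_{u^\star})=\emptyset$, so at $\delta u=0\in T_\U(u^\star)$ the left-hand side of \eqref{eq:primal-opt} equals $-\infty$ (with $\sup\emptyset=-\infty$). The statement would then be false, so this case must be ruled out rather than dismissed. Your first idea, monotonicity plus finiteness on constants, is not enough by itself. The essential supremum is convex, monotone, lower semicontinuous on $L^1_\mu(\Theta)$ and finite on constants, yet it equals $+\infty$ at every $W$ that is not essentially bounded above.

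The missing ingredient is item 5 of \cref{ass:sigma_1}; with it, no extra hypothesis is needed. Monotonicity and finiteness on constants give $L^\infty_\mu(\Theta)\subset\mathrm{dom}\,\cR$, because $\cR(V)\le\cR(\|V\|_{L^\infty_\mu})<\infty$. Now let $B_\varepsilon(W_0)\subset\mathrm{dom}\,\cR$ and let $W\in L^1_\mu(\Theta)$ be arbitrary. By density, pick $V\in L^\infty_\mu(\Theta)$ with $\|V-(2W-W_0)\|_{L^1_\mu}<\varepsilon$. Then $W=\tfrac12 V+\tfrac12\big(W_0+(2W-W_0-V)\big)$ is the midpoint of two points of $\mathrm{dom}\,\cR$, so convexity yields $\mathrm{dom}\,\cR=L^1_\mu(\Theta)$.

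A proper convex lower semicontinuous function that is finite on a whole Banach space is continuous (Baire), hence locally Lipschitz around $\J_{u^\star}$. So everything you use afterwards holds: $\partial\cR(\J_{u^\star})\subset L^\infty_\mu(\Theta)$ is nonempty and weak$^*$ compact, the max formula is valid, and $\cR$ is Hadamard directionally differentiable there. This also makes $\I$ finite on $\U$, which disposes of the case $\I(u^\star)=+\infty$. It is exactly the fact that the paper's brief appeal to \cref{ass:sigma_1} tacitly relies on.
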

\begin{proof}
This result follows from \cite[Prop.~3.13]{ks2018existence} (and the subsequent clarification in \cite{Kouri2022}), we merely verify the hypotheses. To begin, we argue that the composition $(\mathcal{R} \circ \J) \colon L^q([0,T],\R) \to \Rext$ is G\^{a}teaux directionally differentiable. This is a consequence of the chain rule, \cref{thm:frechet_diff} together with \cref{ass:sigma_1}, which imply in particular that $\cR$ is directionally differentiable in the sense of Hadamard, \cite[Prop.~2.126]{Bonnans2000}. 
\end{proof}
As noted in \cite[Thm.~2]{Kouri2018}, since $\J_{u}$ is continuously differentiable, there exists a $\vartheta^{\star} \in \partial \cR(\J_{u^{\star}})$ such that
\begin{equation}\label{eq:primal-opt-nosup}
\mathbb E_{\mu}[\vartheta^{\star} D_{u}\J_{u^{\star}}[\delta u]] + \alpha \rho'(u^{\star};\delta u) \ge 0 \quad \forall \delta u \in T_{\mathcal{U}}(u^{\star}).
\end{equation}
Although \cref{eq:primal-opt,eq:primal-opt-nosup} are convenient for characterizing all local and global solutions to the optimal control problem, they are not easy to verify or use for the development of numerical methods. To remedy this, we provide a dual, multiplier-based set of conditions here. We recall the definition of the normal cone from convex analysis given by
\[
N_{\U}(u^{\star}) := \left\{w \in L^{q'}([0,T],\mathbb R^k) \left|\, \langle w,u - u^{\star}\rangle \le 0 \quad \forall w \in \mathcal{U} \right.\right\}.
\]

\begin{theorem}[Dual First-Order Optimality Conditions]\label{thm:dfoop}
    Let \cref{ass:fields,ass:cauchy,ass:fields_C1,ass:risk_ingred,ass:running_cost,ass:sigma_1} hold and let $u^{\star} \in \U$ be an associated minimizer. Then there exist a dual multiplier $\vartheta^{\star} \in \partial\cR(\J_{u^{\star}}) \subset L^{\infty}_{\mu}(\Theta, \R)$, an adjoint state $P^{\star} \in BV\big([0,T], L^{p_1'}_{\mu}(\Theta, \R^n)\big)$, and a subgradient $\eta^{\star} \in \partial \rho(u^{\star}) \subset L^{q'}([0,T], \R^k)$ such that the following dual optimality conditions hold:
    \begin{align}
        0 &\in \lambda^{\star} + \alpha \eta^{\star} + N_{\U}(u^{\star}) \quad \text{in } L^{q'}([0,T], \R^k), \label{eq:dual_gen_eq}\\
        \begin{split} \label{eq:dual_state_eq}
        \dot X_t^{u^{\star}}(\theta) &= \sum_{i=1}^k u^{\star}_i(t) F_i^\theta\big(X_t^{u^{\star}}(\theta)\big)  \quad \text{for a.e. } t \in [0,T], \\
        X_0^{u^{\star}}(\theta) &= x_0(\theta),
        \end{split} \\
        \begin{split} \label{eq:dual_adjoint_eq}
        -\diff P_t^{\star}(\theta) &= \left[ \sum_{i=1}^k u^{\star}_i(t)\nabla_x F_i^\theta\big(X_t^{u^{\star}}(\theta)\big)  \right]^{\top} P_t^{\star}(\theta) \,\diff t + \nabla_x a\big(t, X_t^{u^{\star}}(\theta), \theta\big) \,\diff\nu(t), \\
        P_{T^+}^{\star}(\theta) &= 0,
        \end{split}
    \end{align}
    for $\mu$-a.e. $\theta \in \Theta$, where $\lambda^{\star} \in L^{q'}([0,T], \R^k)$ is defined component-wise as the expected value:
    \begin{equation}\label{eq:dual_lambda}
        \lambda^{\star}_i(t) \coloneqq \int_{\Theta} \vartheta^{\star}(\theta) P_t^{\star}(\theta)^{\top} F_i^\theta\big(X_t^{u^{\star}}(\theta)\big) \,\diff\mu(\theta) \equiv \mathbb{E}_{\theta \sim\mu}\left[ \vartheta^{\star} P_t^{\star \top} F_i^\theta(X_t^{u^{\star}}) \right], \quad i=1,\ldots,k,
    \end{equation}
    and the adjoint equation \eqref{eq:dual_adjoint_eq} is understood in the sense of measures.
\end{theorem}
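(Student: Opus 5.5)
We start from the primal condition \eqref{eq:primal-opt-nosup}: there is $\vartheta^\star\in\partial\cR(\J_{u^\star})$ with
\[
\mathbb E_\mu[\vartheta^\star D_u\J_{u^\star}[\delta u]]+\alpha\rho'(u^\star;\delta u)\ge 0\quad\forall\,\delta u\in T_\U(u^\star).
\]
Since $\cR$ is convex, monotone and finite near an interior point (\cref{ass:sigma_1}), its subgradients lie in $(L^1_\mu)^*=L^\infty_\mu$ and are nonnegative. The strategy is to identify the linear functional $\delta u\mapsto \mathbb E_\mu[\vartheta^\star D_u\J_{u^\star}[\delta u]]$ with $\langle\lambda^\star,\delta u\rangle_{L^{q'},L^q}$ via an adjoint state. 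Then we convert the variational inequality into the inclusion \eqref{eq:dual_gen_eq} by standard convex analysis.

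\textbf{Adjoint representation.} By \cref{thm:frechet_diff},
\[
\mathbb E_\mu[\vartheta^\star D_u\J_{u^\star}[v]]=\int_\Theta\vartheta^\star(\theta)\int_{[0,T]}D_xa(t,x^\theta(t),\theta)\cdot y^\theta_{u^\star,v}(t)\,\diff\nu(t)\,\diff\mu(\theta).
\]
For fixed $\theta$ we define $P^\theta$ as the solution, in the BV sense, of the backward linear equation \eqref{eq:dual_adjoint_eq} with $P_{T^+}=0$. Using the fundamental matrix of \eqref{eq:fund_matrix}, this solution is given explicitly by
\[
P^\theta_t=\int_{[t,T]}\big(M^\theta(s)M^\theta(t)^{-1}\big)^{\top}\nabla_xa(s,x^\theta(s),\theta)\,\diff\nu(s),
\]
taking the right-continuous convention at atoms of $\nu$. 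We then apply the integration-by-parts formula for the product of the BV function $P^\theta$ and the absolutely continuous function $y^\theta$, with $y^\theta(0)=0$ and $P_{T^+}=0$. The atoms of $\nu$ cause no trouble because $y$ is continuous. This gives
\[
\int D_xa\cdot y\,\diff\nu=\int_0^T P^{\theta\top}_t\sum_iv_i(t)F_i^\theta(x^\theta(t))\,\diff t.
\]
Equivalently, we insert \eqref{eq:y_fund_matrix} directly and use Fubini on $[0,T]^2$ with $\diff\nu\otimes\diff s$. The explicit route avoids any subtlety in the BV calculus. Multiplying by $\vartheta^\star$, integrating in $\theta$, and applying Fubini yields $\langle\lambda^\star,v\rangle$ with $\lambda^\star$ as in \eqref{eq:dual_lambda}.

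\textbf{Integrability (the main technical point).} We must check three things:
\begin{itemize}
\item that $P\in BV([0,T],L^{p_1'}_\mu)$,
\item that Fubini is justified,
\item that $\lambda^\star\in L^{q'}$.
\end{itemize}
The bound \eqref{eq:bound_fund_mat} gives a uniform bound on $|M|,|M^{-1}|$ (with $\|u^\star\|_{L^1}$ fixed). By \eqref{eq:bound_der_a} and \cref{lem:bound_traj},
\[
|P^\theta_t|\le c\,\nu([0,T])\sup_s|D_xa(s,x^\theta(s),\theta)|\in L^{p_1'}_\mu.
\]
Here we use $|x|^{p_1-1}\in L^{p_1'}$ since $x\in L^{p_0}\subset L^{p_1}$, and $g_{\rm der}^{\rm Lip}|x|\in L^{p_1'}$ by Hölder with exponent $p_1/(p_1-2)$. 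The total variation in $t$ is controlled by $\int|u^\star|\,|P|\,\diff t+\nu([0,T])\sup|D_xa|$, so $P\in BV([0,T],L^{p_1'}_\mu)$.

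For $\lambda^\star$ we use $\vartheta^\star\in L^\infty$ and $|F_i^\theta(x)|\le C+L|x|\in L^{p_1}_\mu$. This gives $|\lambda^\star_i(t)|\le\|\vartheta^\star\|_\infty\sup_t\|P_t\|_{L^{p_1'}}\sup_t\|F_i(X_t)\|_{L^{p_1}}$, which is bounded in $t$ and hence lies in $L^{q'}$. The same dominating functions justify Fubini. Measurability of $P$ as a function of $(t,\theta)$ follows from the explicit formula.

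\textbf{Conversion to the inclusion.} The variational inequality now reads $\langle\lambda^\star,\delta u\rangle+\alpha\rho'(u^\star;\delta u)\ge0$ on $T_\U(u^\star)$. We take $\U$ convex, as implicit in the use of the convex normal cone. Then $T_\U(u^\star)$ is the closure of the cone generated by $\U-u^\star$, and its polar is $N_\U(u^\star)$.

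By \cref{ass:running_cost}, $\rho$ is convex and continuous on $L^q$, so $\rho'(u^\star;\cdot)$ is the support function of $\partial\rho(u^\star)$. Hence $\delta u\mapsto\langle\lambda^\star,\delta u\rangle+\alpha\rho'(u^\star;\delta u)+\iota_{T_\U(u^\star)}(\delta u)$ is a sublinear function, nonnegative and vanishing at $0$. Therefore $0$ lies in its subdifferential at $0$. The Moreau–Rockafellar sum rule, valid because $\rho'$ is continuous, gives $0\in\lambda^\star+\alpha\partial\rho(u^\star)+N_\U(u^\star)$. This produces $\eta^\star$ and proves \eqref{eq:dual_gen_eq}. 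The state equation \eqref{eq:dual_state_eq} is simply \eqref{eq:ens_ctrl_sys} with \eqref{eq:pointwise_X}.
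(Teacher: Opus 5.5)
Your proposal is correct and follows the same route as the paper: you start from \eqref{eq:primal-opt-nosup}, represent $\mathbb E_\mu[\vartheta^\star D_u\J_{u^\star}[\delta u]]$ through the BV adjoint (via integration by parts against the continuous linearized state $y^\theta$, or equivalently the fundamental-matrix formula), apply Fubini to obtain $\lambda^\star$, and pass to the normal cone. Your explicit integrability checks and your Moreau--Rockafellar step are more careful than the paper's own argument, which puts a single $\eta^\star\in\partial\rho(u^\star)$ into the variational inequality without first justifying that it can be chosen independently of $\delta u$; like you, the paper also tacitly needs $\U$ convex for the polar of $T_\U(u^\star)$ to coincide with $N_\U(u^\star)$.
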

% {\color{blue} Get back to classical PMP? 

% Bonnans Shapiro Lemma 6.34 provides a useful formula for normal cones to closed convex sets in $L^2$. \cite[Lemma 6.34]{Bonnans2000}}
\begin{proof}
    Let $u^{\star} \in \U$ be a minimizer. If we express \cref{thm:pfoop} using \cref{eq:primal-opt-nosup}, we may write the latter in integral form as:
    % By standard convex analysis (relying on \cref{ass:sigma_1,ass:running_cost}), there exist specific subgradients $\vartheta^{\star} \in \partial\cR(\J_{u^{\star}})$ and $\eta^{\star} \in \partial\rho(u^{\star})$ that achieve the supremum and directional derivative, yielding the variational inequality:
    \begin{equation}\label{eq:proof_vi}
        \int_{\Theta} \vartheta^{\star}(\theta) D_{u}\J_{u^{\star}}[\delta u](\theta) \,\diff\mu(\theta) + \alpha \int_{[0,T]} \eta^{\star}(t) \cdot \delta u(t) \,\diff t \ge 0 \quad \forall \delta u \in T_{\U}(u^{\star}).
    \end{equation}
    In order to derive \cref{eq:dual_gen_eq} we need to introduce the adjoint state $P$. From \cref{thm:frechet_diff}, for $\mu$-a.e.~$\theta\in \Theta$ and for every $\delta u \in T_{\U}(u^{\star})$ the Fréchet differential of the tracking term evaluated in the direction $\delta u$ is given by:
    \begin{equation*}
        D_u\J_{u^{\star}}[\delta u](\theta) = \int_{0}^T \nabla_x a\big(t,X_t^{u^{\star}}(\theta),\theta \big) \cdot y^\theta(t) \, \diff \nu(t),
    \end{equation*}
    where $y^\theta \coloneqq y^\theta_{u^{\star},\delta u} \in AC([0,T],\R^n)$ solves \cref{eq:ODE_y}. \\
    To isolate $\delta u$, we introduce the $\theta$-dependent adjoint state $P^{\star} \in BV\big([0,T], L^{p_1'}_{\mu}(\Theta, \R^n)\big)$ satisfying the backward linear measure differential equation \eqref{eq:dual_adjoint_eq} (see, e.g., \cite{Imaz}). For $\mu$-a.e.~$\theta\in \Theta$, because $y^\theta$ is absolutely continuous and $P^{\star}(\theta)$ is of bounded variation, we apply the integration by parts formula for Lebesgue-Stieltjes integrals \cite[Theorem~6.2.2~(i)]{Carter2000}:
    \begin{equation} \label{eq:P_star_y}
        \begin{split}
        \diff\big( P^{\star}(\theta) \cdot y^\theta \big)([0,T]) &= \big(P^{\star}(\theta) \cdot y^\theta\big)|_{T^+} - \big(P^{\star}(\theta) \cdot y^\theta\big)|_{0^-} \\
        &= \int_{[0,T]} P_t^{\star}(\theta) \cdot \dot{y}^\theta(t) \,\diff t + \int_{[0,T]} y^\theta(t) \cdot \, \diff P_t^{\star}(\theta).
        \end{split}
    \end{equation}
    Because $P_{T^+}^{\star}(\theta) = 0$ and $y^\theta(0) = 0$, the first line in the previous relation is zero. Substituting the dynamics of $\dot{y}^\theta(t)$ from \eqref{eq:ODE_y} and $\diff P_t^{\star}(\theta)$ from \eqref{eq:dual_adjoint_eq} into the right-hand side of \eqref{eq:P_star_y} yields:
    \begin{equation*}
        \begin{split}
            0 &= \int_{[0,T]} P_t^{\star}(\theta) \cdot \left[ \sum_{i=1}^k \nabla_x F_i^\theta\big(X_t^{u^{\star}}(\theta)\big) u^{\star}_i(t) y^\theta(t) + \sum_{i=1}^k F_i^\theta\big(X_t^{u^{\star}}(\theta)\big) \delta u_i(t) \right] \,\diff t \\
            &\quad - \int_{[0,T]} y^\theta(t) \cdot \left[ \left(\sum_{i=1}^k \nabla_x F_i^\theta\big(X_t^{u^{\star}}(\theta)\big) u^{\star}_i(t)\right)^{\top} P_t^{\star}(\theta) \,\diff t + \nabla_x a\big(t, X_t^{u^{\star}}(\theta), \theta\big) \,\diff\nu(t) \right].
        \end{split}
    \end{equation*}
    Canceling like terms and rearranging what remains, we obtain the identity:
    \begin{equation*}
        \int_{[0,T]} \nabla_x a\big(t, X_t^{u^{\star}}(\theta), \theta\big) \cdot y^\theta(t) \,\diff\nu(t) = \int_{[0,T]} P_t^{\star}(\theta)^{\top} \left( \sum_{i=1}^k F_i^\theta\big(X_t^{u^{\star}}(\theta)\big) \delta u_i(t) \right) \,\diff t
    \end{equation*}
    for $\mu$-a.e.~$\theta\in \Theta$; we recognize the left-hand side as $D_{u}\J_{u^{\star}}[\delta u](\theta)$.
    % By virtue of \cref{thm:frechet_diff}, we recognize the left-hand side as $D_{u}\J_{u^{\star}}[\delta u](\theta)$. 
    Therefore, the variational inequality \eqref{eq:proof_vi} takes the form:
    \begin{equation*}
        \int_{\Theta} \vartheta^{\star}(\theta) \left( \int_{[0,T]} P_t^{\star}(\theta)^{\top} \sum_{i=1}^k F_i^\theta\big(X_t^{u^{\star}}(\theta)\big) \delta u_i(t) \,\diff t \right) \diff\mu(\theta) + \alpha \int_{[0,T]} \eta^{\star}(t) \cdot \delta u(t) \,\diff t \ge 0 
    \end{equation*}
    for every $\delta u \in T_{\U}(u^{\star})$.
    Since the vector fields are bounded (\cref{ass:fields}), the adjoint state is integrable, and $\mu$ is a probability measure, the integrands are absolutely integrable. By Fubini's theorem, we obtain:
    \begin{equation*}
        \int_{[0,T]} \left( \int_{\Theta} \vartheta^{\star}(\theta) P_t^{\star}(\theta)^{\top} F^\theta\big(X_t^{u^{\star}}(\theta)\big) \,\diff\mu(\theta) \right) \cdot \delta u(t) \,\diff t = \int_{[0,T]} \lambda^{\star}(t) \cdot \delta u(t) \,\diff t,
    \end{equation*}
    where we set $\lambda^{\star}$ as in \eqref{eq:dual_lambda}. Inserting this back into \eqref{eq:proof_vi} yields:
    \begin{equation*}
        \int_{[0,T]} \big(\lambda^{\star}(t) + \alpha \eta^{\star}(t)\big) \cdot \delta u(t) \,\diff t \ge 0 \quad \forall \delta u \in T_{\U}(u^{\star}).
    \end{equation*}
    By the definition of the normal cone $N_{\U}(u^{\star})$, this is exactly $-(\lambda^{\star} + \alpha \eta^{\star}) \in N_{\U}(u^{\star})$, which proves the generalized equation \eqref{eq:dual_gen_eq}. 
    % \textcolor{blue}{Is \cref{eq:dual_gen_eq} satisfied at a.e. $t$? Is this apparent from this proof?}
\end{proof}

\begin{remark}[Connection to the Classical PMP] \label{rmk:pmp_connection}
The abstract result in \cref{thm:dfoop} can be compared with the classical Pontryagin Maximum/Minimum Principle (PMP) in a natural way. To this end, we first make some simplifications on the problem data. Assume that $\nu$ is the standard Lebesgue measure (i.e., $\diff\nu(t) = \diff t$) and that $\U \coloneqq \{ u \in L^q([0,T], \R^k) \mid u_{\min} \leq u_i(t) \leq u_{\max}, \ i=1,\ldots,k, \ \text{ for a.e. } t\}$.

For each $\theta \in \Theta$, we define the parameter-dependent Hamiltonian $H^\theta \colon [0,T] \times \R^n \times \R^k \times \R^n \to \R$ as
\begin{equation*}
    H^\theta(t, x, u, p) \coloneqq p^\top \left( \sum_{i=1}^k u_i F_i^\theta(x) \right) + a(t, x, \theta).
\end{equation*}
In addition, we define an ensemble Hamiltonian $\mathcal{H}$ evaluated along the ensemble trajectories:
\begin{equation*}
    \mathcal{H}\big(t, X_t(\cdot), u, P_t(\cdot), \vartheta^\star\big) \coloneqq \mathbb{E}_{\theta \sim \mu} \Big[ \vartheta^\star(\theta) H^\theta\big(t, X_t(\theta), u, P_t(\theta)\big) \Big] + \alpha f(t, u).
\end{equation*}
For $\mu$-a.e.~$\theta \in \Theta$, the state and adjoint equations (\cref{eq:dual_state_eq,eq:dual_adjoint_eq}) are equivalent to the Hamiltonian system:
\begin{equation*}
    \dot{X}_t^{\star}(\theta) = \nabla_p H^\theta\big(t, X_t^{\star}(\theta), u^{\star}(t), P_t^{\star}(\theta)\big), \qquad
    -\dot{P}_t^{\star}(\theta) = \nabla_x H^\theta\big(t, X_t^{\star}(\theta), u^{\star}(t), P_t^{\star}(\theta)\big).
\end{equation*}
Finally, the generalized equation \eqref{eq:dual_gen_eq} acts as the Minimum Principle. At a.e.~$t \in [0,T]$, $u^{\star}(t)$ must minimize ${\mathcal{H}}$ over the box constraints. For instance, if $f(t,u) = \frac{1}{2}|u|^2$, this yields the explicit projection:
\begin{equation*}
    u_i^{\star}(t) = \max \left( u_{\min}, \min \left( u_{\max}, -\frac{1}{\alpha} \mathbb{E}_{\theta \sim \mu} \Big[ \vartheta^{\star}(\theta) P_t^{\star}(\theta)^\top F_i^\theta\big(X_t^{\star}(\theta)\big) \Big] \right) \right), \quad i=1,\ldots,k.
\end{equation*}
Here, we see that the multiplier $\vartheta^\star$ effectively re-weights the classical optimality conditions to prioritize the important risk scenarios identified by the risk measure $\cR$.
\end{remark}

\begin{remark}[Effects of Risk Measures on the Optimal Control] \label{rem:cvar-effect}
The reader unfamiliar with the literature on optimization with risk measures, which is traditionally a topic of operations research, may wonder what exactly all this formalism provides. This is perhaps best understood with a simple example. 

Suppose $\mathcal{U}$ constitutes the entire control space, and $\rho(u) = \frac{1}{2} \| u \|^2_{\mathcal{U}}$. %, and each $F^{\theta}$ only depends on $\theta$.  
In addition, assume that $\mu$ is replaced by the empirical probability measure for a random sample of size $S$. 
%If we first replace $F^{\theta}$ by its average $\bar{F}$, then \eqref{eq:dual_gen_eq} states that
%\[
%u^{\star}_i(t) = -\frac{1}{\alpha} (P^{\star}_t)^{\top} \bar{F}_i.
%\]
A na\"{i}ve approach would consist in considering the empirical average $\bar \theta \coloneqq \frac1S \sum_{s=1}^S \theta_s$, and solving the optimal control problem for the specific value $\theta=\bar\theta$.
The corresponding control action, obtained by simply replacing the uncertain parameters by their means, is the furthest from being a robust. In contrast, the risk neutral case, which amounts to using $\mathcal{R} = \mathbb E$ and setting $\vartheta^{\star} \equiv 1$, yields an optimal control that averages over all scenarios 
\[
u^{\star}_i(t) = -\frac{1}{\alpha S} \sum_{s = 1}^{S}\left[(P^{\star}_t(\theta_s))^{\top} F^{\theta_s}_i\right].
\]
While most likely more robust than the previous solution, it still weights all scenarios equally: we are \textit{neutral} to both good and bad outlier scenarios. 

Finally, we compare this to a risk-averse scenario in which we use the so-called \textit{average value-at-risk level $\beta \in (0,1)$}. In the continuous setting, letting $F^{-1}_{X}$ denote the quantile function for a random variable $X$, this is defined by the formulae:
\begin{equation} \label{eq:def_AVaR}
    \mathcal{R}(X) = \mathrm{AVaR}_{\beta}(X)= \frac{1}{1-\beta}\int_{\beta}^{1} F^{-1}_{X}(t) dt \coloneqq \inf_{t \in \mathbb R}\left\{t + \frac{1}{1-\beta} \mathbb E[\max\{0,X-t\}]\right\}.
\end{equation} 
However, when dealing with a sample of $S$ independent and identically distributed (i.e., i.i.d.) observations sorted in non-decreasing order $X_{(1)} \le X_{(2)} \le \dots \le X_{(S)}$, $\mathrm{AVaR}_{\beta}(X)$ takes the form
\[
\mathrm{AVaR}_{\beta}(X) = \frac{1}{S(1-\beta)} \left[ \sum_{s=\lfloor S\beta \rfloor + 1}^{S} X_{(s)} + \left( \lfloor S\beta \rfloor + 1 - S\beta \right) X_{(\lfloor S\beta \rfloor)} \right].
\]
Notice that the focus is on the worst $(1-\beta) \times 100$ percent outcomes in the tail. Now, the optimal $\vartheta^{\star}$, known as the \textit{risk identifier}, weights the $S$ samples automatically. These weights can be obtained by solving the simple linear program (fixing the optimal $u^{\star}$): 
\[
\max_{\vartheta \in \mathbb R^{S}}\left\{\frac{1}{S} \sum_{s=1}^S \vartheta_s \mathcal{J}(u^{\star}; \theta_s) : 0 \le \vartheta_s \le \frac{1}{1-\beta}, \quad \frac{1}{S} \sum_{s=1}^{S} \vartheta_{s} = 1 \right\}.
\]
From the structure of this optimization, it follows that the support of $\vartheta^{\star}$ consists of exactly $K = \lceil S(1-\beta) \rceil$ elements corresponding to the largest cost realizations. Letting $(s)$ denote the index of the $s$-th largest realization, the optimal control reduces to a weighted average over only these tail scenarios:
\[
u^{\star}_i(t) = -\frac{1}{\alpha S} \sum_{k=S-K+1}^{S} \vartheta^{\star}_{(k)} \left[(P^{\star}_t(\theta_{(k)}))^{\top} F^{\theta_{(k)}}_i\right].
\]
By focusing on the highest impact scenarios, this control action is more robust than the mean-value or risk-neutral settings. Nevertheless, the remaining scenarios are not ignored; minimizing the AVaR ensures that the more favorable realizations remain well within the system's safe operating limits. This follows from the fact that if $t^{\star} \in \mathbb R$ is an optimal solution to the variational problem
\[
\mathcal{R}(\mathcal{J}^{u^\star}) = t^{\star} + \frac{1}{(1-\beta)S}\sum_{s = 1}^{S}\max\{0,\mathcal{J}(u^{\star};\theta_s) - t^{\star}\},
\]
then $t^{\star}$ is effectively the upper $\beta$-quantile, which ensures that $\mathcal{J}(u^{\star};\theta) \le t^{\star}$ with probability at least $\beta$.
\end{remark}

\section{Numerical experiment on a quantum system} \label{sec:num}
We propose to solve the optimal control problem using the primal dual minimization algorithm introduced in \cite{Kouri2021}. It is here that we require the weak-to-strong continuity properties of the control-to-state mapping and its derivative, as the latter two properties are necessary to guarantee convergence in the continuous setting, see \cite[Theorem 3]{Kouri2021}.

In the numerical experiment\footnote{The codes for reproducing the experiment is available at the following link: \url{https://doi.org/10.5281/zenodo.20020902}}, we compare the risk-averse approach with the resolution of the averaged and the minimax ensemble optimal control problems, in the same quantum control framework studied in \cite[Section~6]{Scag_25}.
More specifically, we study a two-level system governed by the Schr\"odinger equation
\begin{equation} \label{eq:qubit_sys}
i\dot\psi_u^\theta(t) = \big(H_0^\theta + H_1 u(t) \big)\psi_u^\theta(t) \quad \mbox{a.e.~in } [0,T],
 \qquad \psi_u^\theta(0) =
\left(
\begin{matrix}
0\\
1
\end{matrix}
\right),
\end{equation}
where
\begin{equation*}
H_0^\theta \coloneqq \left(
\begin{matrix}
E+\theta & 0 \\
0 & -E-\theta
\end{matrix}
\right),
\quad
H_1 \coloneqq  \left(
\begin{matrix}
0 & 1 \\
1 & 0
\end{matrix}
\right).
\end{equation*}
Here, $\psi_u^\theta\colon [0,T]\to\mathbb{C}^2$ denotes the state, $E>0$, and $\theta\in[\theta_0,\theta_1]$ represents the unknown parameter indexing the ensemble. The function $u\colon [0,T]\to\R$ is a real-valued control input.
In \cite{RABS}, the authors addressed the problem of constructing a control such that, at the final time $T$, the trajectories of the ensemble \eqref{eq:qubit_sys} approach the target state $\psi^{\mathrm{tar}}\coloneqq (1,0)^\top$ (up to a phase that may depend on $\theta$). Equivalently, the objective was to maximize $\inf_{\theta\in [\theta_0,\theta_1]} \big|\langle \psi^{\mathrm{tar}} \mid \psi^{\theta}_u(T) \rangle\big|$. Their main result established uniform controllability for \eqref{eq:qubit_sys} (see \cite[Theorem~3]{RABS}), along with an explicit construction of control families achieving this objective (see \cite[Remark~5]{RABS}).

\begin{figure}[htbp]
\centering

\includegraphics[width=0.48\textwidth]{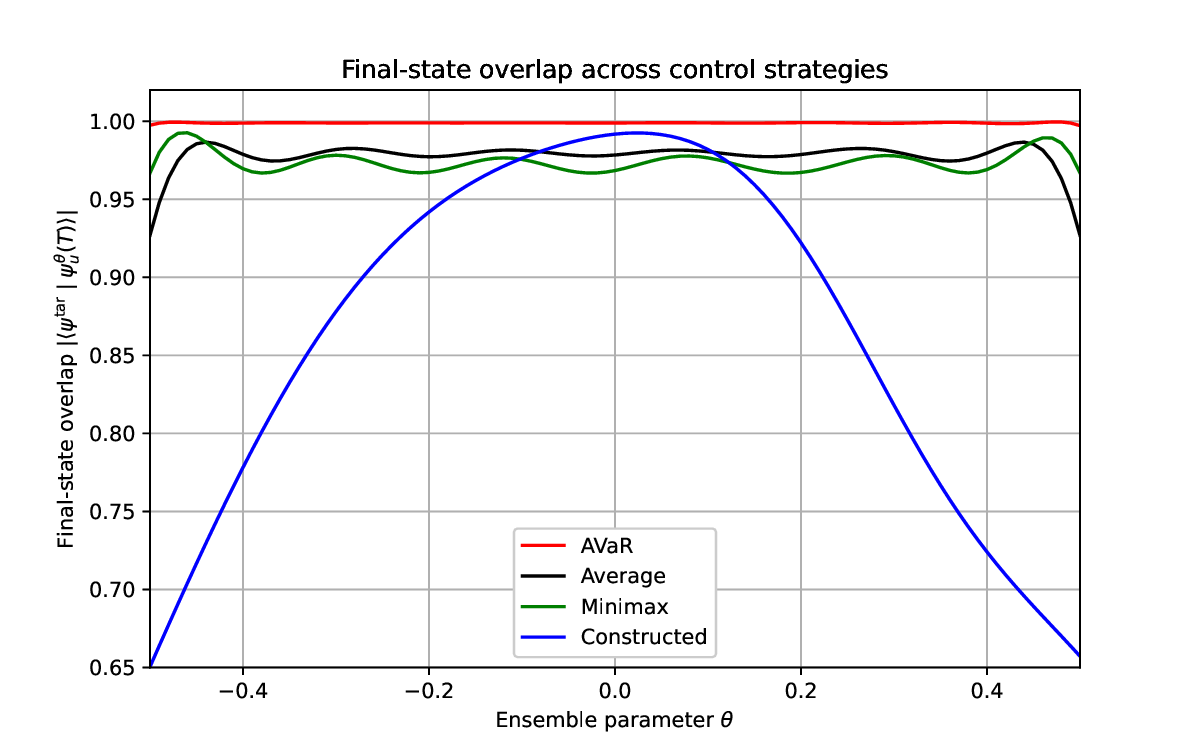}
\hfill
\includegraphics[width=0.48\textwidth]{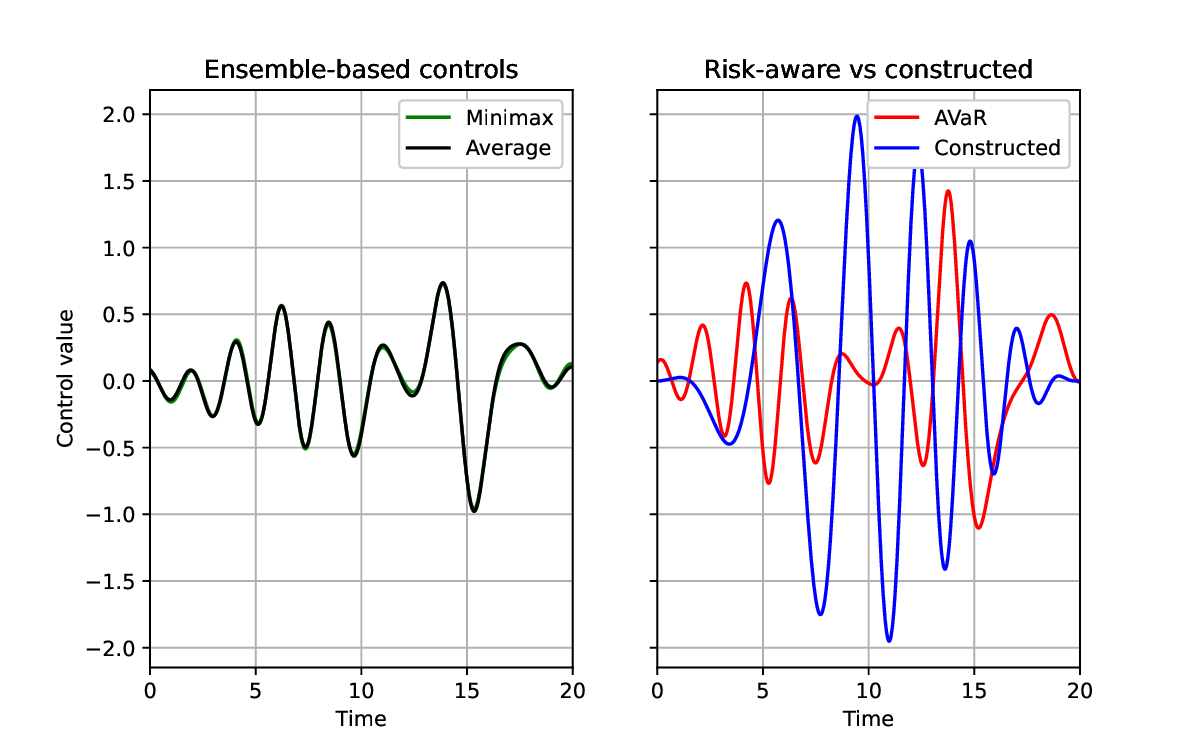}

\caption{Comparison of control strategies. 
Left: final-state overlap $|\langle \psi^{\mathrm{tar}} \mid \psi_u^\theta(T)\rangle|$ as a function of the parameter $\theta \in \Theta$, for controls obtained by minimizing $\I_{\mathrm{avg}}$, $\I_{\mathrm{worst}}$, and $\I_{\mathrm{AVaR}}$, together with the constructed control from \cite{RABS}. 
Right: corresponding control inputs $u^\star_{\mathrm{avg}}$, $u^\star_{\mathrm{worst}}$, and $u^\star_{\mathrm{AVaR}}$ over the time interval $[0,T]$. The AVaR-based control achieves the best uniform performance across the ensemble.}
\label{fig:results}
\end{figure}

We consider $\Theta=[-0.5, 0.5]$ and $\Theta_{N}\coloneqq \{ -0.5+\frac{n}{N} : n=0,\ldots,N \}$, we denoted with $\mu_N$ the uniform probability measure over $\Theta_N$, and we set $N=100$. For a rigorous discussion on approximations based on discrete or empirical measures, we refer to \cite{Milz2024}.
We control the system in \cref{eq:qubit_sys} in the evolution interval $[0,T]$ with $T=20$, and we compute the control inputs by minimizing the functionals $\I_{\mathrm{avg}},\I_{\mathrm{worst}},\I_{\mathrm{AVaR}} \colon L^2([0,T],\R)\to \R$ defined, respectively, as follows:
\begin{equation} \label{eq:def_qubit_worst}
\I_{\mathrm{worst}}(u)\coloneqq  \max_{\theta \in \Theta_N} \left(
1- \big|\langle \psi^{\mathrm{tar}} | \psi^{\theta}_u(T) \rangle\big|^2
\right) + \alpha \| u \|_{L^2}^2,
\end{equation}
\begin{equation} \label{eq:def_qubit_avg}
\I_{\mathrm{avg}}(u)\coloneqq  \mathbb E_{\theta\sim \mu_N } \Big[  
1- \big|\langle \psi^{\mathrm{tar}} | \psi^{\theta}_u(T) \rangle\big|^2 \Big]
  + \alpha \| u \|_{L^2}^2,
\end{equation}
and
\begin{equation} \label{eq:def_qubit_AVaR}
\I_{\mathrm{AVaR}}(u)\coloneqq   \mathrm{AVaR}_{\beta } \Big[  
1- \big|\langle \psi^{\mathrm{tar}} | \psi^{\theta}_u(T) \rangle\big|^2 \Big]
  + \alpha \| u \|_{L^2}^2,
\end{equation}
where $\mathrm{AVaR}_{\beta }$ is defined as in \cref{eq:def_AVaR}, and where we set $\alpha = 2^{-4}$ in \cref{eq:def_qubit_avg,eq:def_qubit_worst,eq:def_qubit_AVaR} and the risk-level $\beta = 0.95$ in \cref{eq:def_qubit_AVaR}.
In the numerical implementation, we adopt a uniform time-step discretization with $\Delta t = 2^{-5}$, and we consider controls $u$ that are piecewise constant on each subinterval $[k\Delta t, (k+1)\Delta t)$, for $k = 0,1,\ldots$. On every subinterval, the system \eqref{eq:qubit_sys} is propagated by computing the \emph{exact matrix exponential} associated with $H_0^\theta + H_1 u$.

The numerical experiments are initialized with a Gaussian random control, generated in a reproducible manner. All derivatives involved in the optimization procedures are computed using automatic differentiation (PyTorch).
We first minimize the functional $\I_{\mathrm{avg}}$ by performing a (deterministic) gradient descent for $500$ iterations, with an adaptive adjustment of the step size based on the Armijo rule. The resulting control is denoted by $u^\star_{\mathrm{avg}}$, and it is then used as an initial guess for the minimization of $\I_{\mathrm{worst}}$ and $\I_{\mathrm{AVaR}}$. For $\I_{\mathrm{worst}}$, we employ a subgradient method with a learning rate that decays proportionally to the inverse square root of the iteration number, starting from the an initial step size equal to $2^{-3}$. The procedure is run for $1000$ iterations.
For the minimization of $\I_{\mathrm{AVaR}}$, we implement \cite[Algorithm~2]{Kouri2021}, with a PyThorch built-in L-BFGS scheme for the resolution of the inner optimization sub-problem. The stopping criterion of the primal-dual algorithm is satisfied after $15$ iterations.
The results are reported in \cref{fig:results}. The control obtained by minimizing $\I_{\mathrm{AVaR}}$ exhibits the best performance among the considered approaches. For comparison, we also include the control constructed in \cite{RABS}, which is defined by
\begin{equation*}
u_{\mathrm{ref}}(t)
=
2\varepsilon_1 \big(1 - \cos(2\pi \varepsilon_1 \varepsilon_2 t)\big)
\cos\left(
2Et
+ \frac{v_0 - v_1}{\pi \varepsilon_1 \varepsilon_2}\sin(\pi \varepsilon_1 \varepsilon_2 t)
+ (v_0 + v_1)t
\right),
\end{equation*}
where $\varepsilon_1 = 0.5$, $\varepsilon_2 = 0.1$, $v_0=-0.5$ and $v_1=0.5$.

\section*{Conclusions}
% In this paper, we investigated risk-averse ensemble optimal control problems for nonlinear-in-state control-affine systems within a general framework, encompassing both the assumptions on the controlled fields and the structure of the cost functional. 
% Particular attention was devoted to the regularity properties of the control-to-state map, valued in $C^0 \big( [0,T],L^{p_0}_\mu(\Theta) \big)$. We established its Fréchet differentiability and the weak-to-strong continuity of its differential with respect to the base point. 
% These results enabled the application of an efficient primal-dual algorithm for solving risk-averse ensemble optimal control problems, which was validated on a quantum control example.

This work demonstrates that transitioning from risk-neutral ensemble optimal control to a risk-averse paradigm is essential to properly hedge against worst-case parametric outliers in a variety of relevant modern applications. 
For example, we observed how specific risk measures such as the Average Value-at-Risk can be used to modulate between the risk-neutral and robust settings. 
On the one hand, the value of this approach from a practical standpoint was particularly evident in the numerical experiment motivated by quantum control. 
On the other hand, our theoretical results for control-affine systems establishing the continuous Fr\'echet differentiability, as well as the weak-to-strong continuity of both the control-to-state mapping and its differential, provide the analytical prerequisites needed to deploy infinite-dimensional primal-dual optimization algorithms such as the one developed in \cite{Kouri2021}. Furthermore, these regularity results lay the essential groundwork for asymptotic statistical studies of empirical approximations, as in \cite{Milz2024}. Future work will focus on exploring applications to the robust training of Neural ODEs, as well as extending this framework to fully nonlinear systems; a challenging direction that will likely require analytical relaxation of the control space via Young measures.

\subsection*{Acknowledgements}

Alessandro Scagliotti acknowledges support from the ERC Advanced Grant NEITALG, grant agreement No.~101198055 (P.I.: Prof.~Massimo Fornasier). Thomas M. Surowiec was supported by the Research Council of Norway (grant number: 357482, SURE-AI Centre for Sustainable, Risk-Averse and Ethical AI).\\
%The funding for Open Access publishing have been provided by the Technical University of Munich within the German DEAL agreement.

\bigskip
\begin{center}
  \FundingLogos
  
  \vspace{0.5em}
  \begin{tcolorbox}\centering\small
    % CHOOSE the sentence that applies to your grant/programme and replace placeholders.
    % 1) For Horizon Europe / ERC (recommended wording):
    Funded by the European Union. Views and opinions expressed are however those of the author(s) only and do not necessarily reflect those of the European Union or the European Research Council Executive Agency. Neither the European Union nor the granting authority can be held responsible for them.
    This project has received funding from the European Research Council (ERC) under the European Union’s Horizon Europe research and innovation programme (grant agreement No.~101198055, project acronym NEITALG).
    
    % 2) (Alternatively, for Horizon 2020 ERC grants use the H2020 wording:)
    % This project has received funding from the European Research Council (ERC) under the European Union's Horizon 2020 research and innovation programme (grant agreement No. <GRANT NUMBER>, project acronym <ACRONYM>).
  \end{tcolorbox}
\end{center}

%%%%%%%%%%%%%%%%%%%%
%%%%%%%%%%%%%%%%%%%%%

\appendix
\section{Technical results} \label{app:technical}

We recall below the basic results that guarantee existence of solutions to differential equations in Banach spaces.
We first recall the notion of $L^q$-Carath\'eodory function. For further details, we refer the reader to \cite[Chapters~3 and~16]{o1997existence}.

\begin{definition} \label{def:carath}
    Let $(E, \|\cdot\|_E)$ be a Banach space. A function $F\colon [0,T] \times E \to E$ is said to be \emph{$L^q$-Carath\'eodory} if the following conditions hold:
    \begin{itemize}
        \item The map $y \mapsto F(t, y)$ is continuous for a.e.~$t\in [0,T]$;
        \item The map $t \mapsto F(t, y)$ is measurable for every $y\in E$;
        \item For every $c > 0$ there exists $h_c\in L^q([0,T],\R_+)$ such that $\|y\|_E\leq C$ implies $\| F(t,y) \|_E \leq h_c(t)$ for a.e.~$t\in [0,T]$.
    \end{itemize}
    Finally, we say that a {$L^q$-Carath\'eodory} function $F$ is \emph{$L^q$-Lipschitz} if there exists $\ell \in L^q([0,T],\R_+)$ such that 
    \begin{equation*}
        \| F(t,y) - F(t,y') \|_E \leq \ell(t) \| y - y' \|_E
    \end{equation*}
    for a.e.~$t\in [0,T]$.
\end{definition}

We now state the existence result.

\begin{theorem} \label{thm:existence_ODE}
    Let $(E, \|\cdot\|_E)$ be a Banach space, and let $F\colon [0,T] \times E \to E$ be $L^q$-Carath\'eodory and $L^q$-Lipschitz. Then, for every $y_0\in E$, the Cauchy problem 
    \begin{equation*}
        \begin{cases}
            \dot y = F(t,y)  &
\mbox{for a.e. }t\in [0,T],\\
            y(0) =y_0,
        \end{cases}
    \end{equation*}
    admits a unique solution $y(\cdot) \in W^{1,p}([0,T],E)$.
\end{theorem}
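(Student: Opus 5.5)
The plan is the standard Picard--Banach fixed-point argument in the Bochner space $C^0([0,T],E)$, run with a weighted (Bielecki-type) norm so that no smallness condition on $T$ is needed.

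\textbf{Integral reformulation.} By definition, $y\in W^{1,1}([0,T],E)$ solves the Cauchy problem if and only if
\begin{equation*}
y(t)=y_0+\int_0^t F(s,y(s))\,\diff s \quad \text{for all } t\in[0,T],
\end{equation*}
with the Bochner integral. For $y\in C^0([0,T],E)$ define the operator
\begin{equation*}
\Phi(y)(t)\coloneqq y_0+\int_0^t F(s,y(s))\,\diff s .
\end{equation*}

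\textbf{$\Phi$ is well defined.} First I show that $s\mapsto F(s,y(s))$ is strongly measurable. Approximate $y$ uniformly by piecewise constant functions $y_j$. By measurability in $t$, each $s\mapsto F(s,y_j(s))$ is strongly measurable. By continuity in $y$ for a.e.~$s$, $F(s,y_j(s))\to F(s,y(s))$ for a.e.~$s$, so the limit is strongly measurable as well. (In the separable case this can also be seen via Pettis' theorem.) Next, since $\sup_t\|y(t)\|_E\le c$, the Carath\'eodory bound gives $\|F(s,y(s))\|_E\le h_c(s)$ with $h_c\in L^q\subset L^1$. Hence the integrand is Bochner integrable, and $\Phi(y)$ is absolutely continuous with derivative in $L^q([0,T],E)$. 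So $\Phi$ maps $C^0([0,T],E)$ into itself, and any fixed point lies in $W^{1,q}([0,T],E)$.

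\textbf{Contraction.} Set $\Lambda(t)\coloneqq\int_0^t\ell(s)\,\diff s$, which is finite and continuous since $\ell\in L^q\subset L^1$. Equip $C^0([0,T],E)$ with the norm
\begin{equation*}
\|y\|_\ast\coloneqq\sup_t e^{-2\Lambda(t)}\|y(t)\|_E,
\end{equation*}
which is equivalent to the sup norm. The Lipschitz property gives
\begin{equation*}
\|\Phi(y)(t)-\Phi(z)(t)\|_E\le\int_0^t\ell(s)e^{2\Lambda(s)}\,\diff s\,\|y-z\|_\ast\le\tfrac12 e^{2\Lambda(t)}\|y-z\|_\ast,
\end{equation*}
so $\|\Phi(y)-\Phi(z)\|_\ast\le\frac12\|y-z\|_\ast$. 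Banach's fixed-point theorem on the complete space $(C^0([0,T],E),\|\cdot\|_\ast)$ then yields a unique fixed point, which is the solution.

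\textbf{Uniqueness.} Every solution in $W^{1,1}$ has a continuous representative that is a fixed point of $\Phi$, so uniqueness follows from uniqueness of the fixed point. Alternatively, Gr\"onwall's lemma applied to $\|y(t)-z(t)\|_E$ gives the same conclusion.

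\textbf{Main obstacle.} The delicate step is the strong measurability of $s\mapsto F(s,y(s))$ in a possibly non-separable $E$, which I handle by the approximation argument above. The rest of the proof is routine.
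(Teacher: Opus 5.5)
Your proof is correct. The paper does not actually prove this statement: it only refers to O'Regan's monograph \cite{o1997existence} (Theorem~3.4 with the subsequent remark, and Theorem~16.2 for general Banach spaces). You instead give a self-contained argument, namely Banach's fixed-point theorem for the Picard operator $\Phi$ on $C^0([0,T],E)$ with the Bielecki norm $\sup_t e^{-2\Lambda(t)}\|y(t)\|_E$, where $\Lambda(t)=\int_0^t\ell$.

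Your route has three concrete advantages. The weight absorbs the size of $\|\ell\|_{L^1}$, so existence and uniqueness hold on all of $[0,T]$ in one step, without subdividing the interval or passing to iterates of $\Phi$. You get uniqueness in the larger class $W^{1,1}([0,T],E)$. And your step-function approximation gives strong measurability of $s\mapsto F(s,y(s))$ with no separability assumption on $E$. This last point requires reading ``measurable'' in the definition of $L^q$-Carath\'eodory as strongly measurable, which is the standard convention and clearly holds for the field $\F_u$ used in the paper. What the citation buys is brevity.

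Two small remarks. First, you tacitly assume that the bound $\|F(t,y)\|_E\le h_c(t)$ and the Lipschitz inequality hold outside a null set of times that does not depend on $y$ (and $y'$). Under the weaker reading, the same approximation works: use step functions whose values lie in the ranges of $y$ and $z$, and pass to the limit with the a.e.\ continuity of $F(t,\cdot)$. This transfers both inequalities to $s\mapsto y(s)$, so nothing breaks. Second, the regularity you obtain, $W^{1,q}$, is the intended one; the exponent $p$ in the statement is a typo, since no $p$ appears in the hypotheses.
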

\begin{proof}
    See \cite[Theorem~3.4]{o1997existence} and the subsequent Remark. See also \cite[Theorem~16.2]{o1997existence} for a statement in full generality.
\end{proof}

\begin{lemma} \label{lem:lipsch_traj}
Let \cref{ass:fields,ass:cauchy} hold. Then, for $\mu$-a.e.~$\theta \in \Theta$ and for every $u,v \in L^q([0,T],\R^k)$ with $\|v\|_{L^q}\leq 1$ 
we have 
\begin{equation*} %\label{eq:lip_x}
\begin{split}
\sup_{t\in [0,T]} \big| x^\theta_{u+v}(t) - x_{u}^\theta(t) \big| &\leq    
\left( C + L   
            \Big( | x_0(\theta) | + C \| u \|_{L^1}  \Big)e^{L \| u \|_{L^1}}
            \right) 
            e^{L \| u \|_{L^1}} \|v\|_{L^1}\\
            &\leq \left( C + L   
            \Big( | x_0(\theta) | + C \| u \|_{L^1}  \Big)
            \right) 
            e^{2L \| u \|_{L^1}} \|v\|_{L^1}.
\end{split}
\end{equation*}
\end{lemma}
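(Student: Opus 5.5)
The plan is a single Gr\"onwall argument for $\delta(t)\coloneqq x^\theta_{u+v}(t)-x^\theta_u(t)$ at a fixed $\theta$ in the full-measure set where \cref{ass:fields,ass:cauchy} hold. I read the bound at $x=1$ in \cref{ass:fields} as the intended bound at $x=0$, which gives $|F_i(x,\theta)|\le C+L|x|$.

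\textbf{Step 1 (splitting).} Subtracting the two Carath\'eodory integral equations and using $\delta(0)=0$, I split the integrand so that the forcing involves only the \emph{unperturbed} trajectory. This keeps the bound on $x^\theta_u$ from \cref{lem:bound_traj} free of $v$:
\begin{equation*}
|\delta(t)|\le \int_{[0,t]} L|\delta(s)|\,|u(s)|\,\diff s+\int_{[0,t]}\big|F^\theta(x^\theta_u(s))\big|\,|v(s)|\,\diff s+\int_{[0,t]} L|\delta(s)|\,|v(s)|\,\diff s .
\end{equation*}

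\textbf{Step 2 (forcing term).} By \cref{lem:bound_traj},
\begin{equation*}
\big|F^\theta(x^\theta_u(s))\big|\le C+L\big(|x_0(\theta)|+C\|u\|_{L^1}\big)e^{L\|u\|_{L^1}}\quad\text{for all } s\in[0,T].
\end{equation*}
This is exactly the bracket in the first line of the statement. Hence the forcing term is at most that bracket times $\|v\|_{L^1}$.

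\textbf{Step 3 (Gr\"onwall).} Gr\"onwall's lemma in integral form then gives
\begin{equation*}
\sup_{t\in[0,T]}|\delta(t)|\le\Big(C+L\big(|x_0(\theta)|+C\|u\|_{L^1}\big)e^{L\|u\|_{L^1}}\Big)e^{L\|u\|_{L^1}+L\|v\|_{L^1}}\|v\|_{L^1}.
\end{equation*}
The second inequality of the statement would follow from the elementary bound $C+L(\cdots)e^{L\|u\|_{L^1}}\le\big(C+L(\cdots)\big)e^{L\|u\|_{L^1}}$, which holds since $e^{L\|u\|_{L^1}}\ge1$.

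\textbf{Main obstacle: the statement cannot be proved as worded.} The difficulty is removing the factor $e^{L\|v\|_{L^1}}$ that Step 3 produces, so that only $e^{L\|u\|_{L^1}}$ remains. I expect this to be impossible in general. Take $n=k=1$, $F(x)=x$ (so $C=0$, $L=1$), $u\equiv0$, and $v\ge0$ with $\|v\|_{L^q}\le1$ and $s\coloneqq\|v\|_{L^1}>0$. Then
\begin{equation*}
x_v(t)-x_0=x_0\big(e^{\int_0^t v}-1\big),\qquad \sup_{t\in[0,T]}|x_v(t)-x_0|=|x_0|\,(e^{s}-1)>|x_0|\,s .
\end{equation*}
The right-hand side $|x_0|\,s$ is what the stated inequality permits here, so the inequality fails.

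I therefore cannot deliver the stated inequality. The bound I can prove is the one from Step 3, with exponent $L(\|u\|_{L^1}+\|v\|_{L^1})$. Under $\|v\|_{L^q}\le1$ this gives $e^{L\|v\|_{L^1}}\le e^{LT^{1-1/q}}$. That harmless constant is all that \cref{lem:traj_diff} and \cref{rmk:loss_integrab} actually need, since it changes neither the quadratic order in $\|v\|_{L^1}$ nor the $L^{p_0/2}_\mu$ integrability of $\kappa_u$. The inequality as written in the statement should be corrected to include this factor.
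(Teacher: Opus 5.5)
Your proposal is correct: the stated inequality is false, and the gap is in the paper's proof, not in your argument. Both you and the paper run a single Gr\"onwall argument, but with different splittings.

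The paper writes $\dot x^\theta_{u+v}-\dot x^\theta_u=\sum_i F_i^\theta(x^\theta_{u+v})v_i+\sum_i\big(F_i^\theta(x^\theta_{u+v})-F_i^\theta(x^\theta_u)\big)u_i$, so its Gr\"onwall kernel involves only $|u|$. It then bounds the forcing factor $|F_i^\theta(x^\theta_{u+v}(s))|$ by $C+L\sup_{s\in[0,t]}|x^\theta_u(s)|$. That step is unjustified. The growth bound only gives $C+L|x^\theta_{u+v}(s)|$, and \cref{lem:bound_traj} applied to $u+v$ produces $e^{L\|u+v\|_{L^1}}$, which again carries a $v$-dependent exponential.

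Your splitting, with forcing $F^\theta(x^\theta_u)v$ and kernel $L(|u|+|v|)$, is a correct version of the same argument. The two routes just place the $v$-dependence differently: the corrected paper route puts it in the forcing term through $\sup|x^\theta_{u+v}|$, while yours puts it in the exponent. Your scalar example with $F(x)=x$, $u\equiv0$, $v\ge0$ shows that some such factor cannot be dropped, e.g.\ $e^{L\|v\|_{L^1}}\le e^{LT^{1-1/q}}$.

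Two small remarks. First, \cref{ass:fields} asks for $C>0$, so your choice $C=0$ is not admissible as written. This is harmless: the defect $|x_0|(e^s-1-s)$ grows linearly in $|x_0|$, so a constant initial datum with $|x_0|>Cs/(e^s-1-s)$ breaks the inequality for any admissible $C$. The same holds under the literal reading of the bound at $x=1$, which forces $C\ge1$.

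Second, your claim about the downstream results is right. In \cref{lem:traj_diff} the estimate enters only through $|x^\theta_{u+v}-x^\theta_u|$ and $|x^\theta_{u+v}-x^\theta_u|^2$, so $\kappa_u$ is multiplied by at most $e^{2LT^{1-1/q}}$. This keeps the quadratic order in $\|v\|_{L^1}$ and the behaviour $\kappa_u\simeq|x_0|^2$. Hence \cref{rmk:loss_integrab} and \cref{prop:diff_Frechet} survive unchanged up to constants.
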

\begin{proof}
    Using the fact that $x_{u+v}^\theta(\cdot), x_{u}^\theta(\cdot)$ solve \eqref{eq:ens_ctrl_sys}, for every $t\in [0,T]$ we deduce that
    \begin{equation*}
        \begin{split}
            \big| x^\theta_{u+v}(t) - x_{u}^\theta(t) \big| & \leq
            \sum_{i=1}^k \int_{[0,t]} 
             \left| F_i^\theta \big( x_{u+v}^\theta(s) \big) \right| |v_i(s)|
            + \left| F_i^\theta \big( x_{u+v}^\theta(s) \big) - F_i^\theta \big( x_{u}^\theta(s) \big) \right| |u_i(s)|
            \, \diff s\\
            &\leq \sum_{i=1}^k \int_{[0,t]}
            \left( C + L\,  
            {\textstyle \sup_{s \in[0,t]}} \big| x_{u}^\theta(s) \big|
            \right)
             |v_i(s)|
             + L |u_i(s)| \big| x^\theta_{u+v}(s) - x_{u}^\theta(s) \big|
            \, \diff s,
        \end{split}
    \end{equation*}
    where we used the sublinear growth condition implied by \cref{ass:fields}, together with the Lipschitz-continuity of the vector fields. 
    Finally, the statement follows from the Gr\"onwall Lemma and from the estimate in \cref{lem:bound_traj}.
\end{proof}

\subsection{Properties of the Nemytskij operators} \label{subsec:Nemytskij}

\begin{lemma} \label{lem:lipschitz_nemytskij_A}
    Let \cref{ass:risk_ingred} hold. Let $A\colon L^\infty\big([0,T], L^{p_1}_\mu(\Theta,\R^n) \big) \to L^\infty\big([0,T], L^{1}_\mu(\Theta,\R) \big)$ be the Nemytskij operator defined in \cref{eq:def_nemytskij_A}.
    Then, $A$ is locally Lipschitz continuous, i.e., for every $R>0$ there exists a constant $L_A(R)>0$ (depending on $R$) such that 
    \begin{equation*}
        \sup_{t\in [0,T]} \| A(Z)_t - A(Z')_t \|_{L^1_\mu(\Theta,\R)} \leq L_A(R) \sup_{t\in [0,T]} \| Z_t - Z'_t \|_{L^{p_1}_\mu(\Theta,\R^n)},
    \end{equation*}
    whenever $\max \Big\{ \sup_{t\in [0,T]} \| Z_t \|_{L^{p_1}_\mu(\Theta,\R^n)}, \sup_{t\in [0,T]} \| Z_t' \|_{L^{p_1}_\mu(\Theta,\R^n)} \Big\} \leq R$.
\end{lemma}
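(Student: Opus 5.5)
The plan is to bound the difference pointwise in $(t,\theta)$ using the local Lipschitz property \eqref{eq:lip_a}, and then pass to the $L^1_\mu$-norm with Hölder's inequality. Fix $R>0$ and $Z,Z'$ with $\sup_t\|Z_t\|_{L^{p_1}_\mu},\sup_t\|Z'_t\|_{L^{p_1}_\mu}\le R$. For a.e.~$t$ and $\mu$-a.e.~$\theta$, \eqref{eq:lip_a} gives
\begin{equation*}
|A(Z)_t(\theta)-A(Z')_t(\theta)| \le \Big( C^{\mathrm{Lip}}|Z_t(\theta)|^{p_1-1} + C^{\mathrm{Lip}}|Z'_t(\theta)|^{p_1-1} + 2g^{\mathrm{Lip}}(\theta)\Big)\,|Z_t(\theta)-Z'_t(\theta)|.
\end{equation*}
Integrating in $\theta$ and applying Hölder with the conjugate pair $(p_1',p_1)$, I bound $\|A(Z)_t-A(Z')_t\|_{L^1_\mu}$ by the $L^{p_1'}_\mu$-norm of the bracket times $\|Z_t-Z'_t\|_{L^{p_1}_\mu}$.

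The key computation is the $L^{p_1'}_\mu$-norm of the bracket. Since $(p_1-1)p_1'=p_1$, we have
\begin{equation*}
\big\||Z_t|^{p_1-1}\big\|_{L^{p_1'}_\mu}=\|Z_t\|_{L^{p_1}_\mu}^{p_1-1}\le R^{p_1-1},
\end{equation*}
and the same holds for $Z'_t$. By \cref{ass:risk_ingred}, $g^{\mathrm{Lip}}\in L^{p_1'}_\mu$. Minkowski's inequality then gives the bound
\begin{equation*}
L_A(R)\coloneqq 2C^{\mathrm{Lip}}R^{p_1-1}+2\|g^{\mathrm{Lip}}\|_{L^{p_1'}_\mu},
\end{equation*}
which is independent of $t$. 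Taking the (essential) supremum over $t\in[0,T]$ yields the claim.

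A minor preliminary is that $A(Z)$ actually belongs to $L^\infty([0,T],L^1_\mu)$, i.e.~that it is measurable in $t$. Boundedness follows from \eqref{eq:bound_a}, Hölder, and $\bar g_a\in L^1_\mu$. I expect measurability to be the only delicate point, and I would defer it to the definition of the operator, handling it as in \cref{lem:Bochner_integr} or via \cite{goldberg1992nemytskij}, since the Lipschitz estimate itself is purely pointwise in $t$.
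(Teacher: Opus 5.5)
Your proposal is correct and follows essentially the same route as the paper: the pointwise bound from \eqref{eq:lip_a}, H\"older's inequality with the pair $(p_1',p_1)$, and the identity $\||Z_t|^{p_1-1}\|_{L^{p_1'}_\mu}=\|Z_t\|_{L^{p_1}_\mu}^{p_1-1}$ combined with $g^{\mathrm{Lip}}\in L^{p_1'}_\mu$. Your constant $L_A(R)=2C^{\mathrm{Lip}}R^{p_1-1}+2\|g^{\mathrm{Lip}}\|_{L^{p_1'}_\mu}$ also correctly retains the factor $C^{\mathrm{Lip}}$, which the paper's final display drops.
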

\begin{proof}
    Let us consider $Z,Z' \in L^\infty\big([0,T], L^{p_1}_\mu(\Theta,\R^n) \big)$. Then, using \cref{ass:risk_ingred}, we compute
    \begin{equation*}
        \begin{split}
            \sup_{t\in [0,T]} \big\| A(Z)_t & - A(Z')_t \big\|_{L^1_\mu(\Theta,\R)}
             = \sup_{t\in [0,T]} \int_\Theta
            \left| a\big(t,Z_t(\theta),\theta\big) - a\big(t,Z_t'(\theta),\theta\big) \right|
            \, \diff\mu(\theta)\\
            & \leq \sup_{t\in [0,T]}
            \int_\Theta
            \Big( L\big(|Z_t(\theta)|,\theta \big) + L\big( |Z_t'(\theta)|, \theta \big) \Big) |Z_t(\theta) - Z_t'(\theta)|
            \, \diff\mu(\theta) \\
            &\leq 
            \sup_{t\in [0,T]}\left( \left\|
            L\big( Z_t(\cdot), \cdot \big) + L\big( Z_t'(\cdot), \cdot \big) \right\|_{L^{p_1'}_\mu(\Theta,\R)} \|Z_t -Z_t'\|_{L^{p_1}_\mu(\Theta,\R^n)}
            \right).
        \end{split}
    \end{equation*}
Moreover, recalling that $(r,\theta) \mapsto L(r,\theta) = C^{\mathrm{Lip}} r^{p_1-1} + g^{\mathrm{Lip}}(\theta)$, we obtain that
\begin{equation*}
    \begin{split}
    \sup_{t\in [0,T]} \big\|
            L\big( Z_t(\cdot), \cdot \big) &+ L\big( Z_t'(\cdot), \cdot \big) \big\|_{L^{p_1'}_\mu(\Theta,\R)} \\
            &
            \leq 2 \| g^{\mathrm{Lip}} \|_{L^{p_1'}_\mu(\Theta,\R)} + \sup_{t\in [0,T]} \left(
            \left\| |Z_t|^{p_1-1} + |Z_t'|^{p_1-1} \right\|_{L^{p_1'}_\mu(\Theta,\R)}
            \right)\\
            & \leq
            2 \| g^{\mathrm{Lip}} \|_{L^{p_1'}_\mu(\Theta,\R)} + \sup_{t\in [0,T]} \left(
            \| Z_t\|_{L^{p_1}_\mu(\Theta,\R^n)}^{p_1-1} 
            + \| Z_t'\|_{L^{p_1}_\mu(\Theta,\R^n)}^{p_1-1} 
            \right), 
    \end{split}
\end{equation*}
and this concludes the proof.
\end{proof}

\begin{lemma} \label{lem:cont2cont_nemytskij_A}
    Let \cref{ass:risk_ingred} hold. Let $A\colon L^\infty\big([0,T], L^{p_1}_\mu(\Theta,\R^n) \big) \to L^\infty\big([0,T], L^{1}_\mu(\Theta,\R) \big)$ be the Nemytskij operator defined in \cref{eq:def_nemytskij_A}. 
    If $Z\in C^0\big([0,T], L^{p_1}_\mu(\Theta,\R^n) \big)$, then $A(Z)\in C^0\big([0,T], L^{1}_\mu(\Theta,\R) \big)$.
\end{lemma}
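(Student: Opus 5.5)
We fix $t\in[0,T]$ and a sequence $t_n\to t$, and aim to show $\|A(Z)_{t_n}-A(Z)_t\|_{L^1_\mu}\to 0$. Our plan is to split the difference into two terms:
\begin{equation*}
A(Z)_{t_n}-A(Z)_t = \Big( a\big(t_n,Z_{t_n}(\cdot),\cdot\big) - a\big(t_n,Z_t(\cdot),\cdot\big)\Big) + \Big( a\big(t_n,Z_t(\cdot),\cdot\big) - a\big(t,Z_t(\cdot),\cdot\big)\Big).
\end{equation*}
The first term will be controlled by a uniform-in-time Lipschitz estimate. The second will be handled by pointwise convergence together with domination.

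For the first term, we use \cref{eq:lip_a} and H\"older's inequality exactly as in the proof of \cref{lem:lipschitz_nemytskij_A}. Since the Lipschitz bound does not depend on the time variable, this gives
\begin{equation*}
\big\| a(t_n,Z_{t_n},\cdot) - a(t_n,Z_t,\cdot)\big\|_{L^1_\mu} \leq \Big( 2\|g^{\mathrm{Lip}}\|_{L^{p_1'}_\mu} + C^{\mathrm{Lip}}\big(\|Z_{t_n}\|^{p_1-1}_{L^{p_1}_\mu} + \|Z_t\|^{p_1-1}_{L^{p_1}_\mu}\big)\Big)\|Z_{t_n}-Z_t\|_{L^{p_1}_\mu}.
\end{equation*}
By continuity of $Z$, the norms $\|Z_{t_n}\|_{L^{p_1}_\mu}$ are bounded (they are bounded by $\sup_\tau\|Z_\tau\|$, which is finite on the compact interval) and $\|Z_{t_n}-Z_t\|_{L^{p_1}_\mu}\to 0$. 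Hence this term vanishes.

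For the second term, $Z_t$ is a fixed element of $L^{p_1}_\mu$. By the Carath\'eodory Property I in \cref{ass:risk_ingred}, $a(t_n,Z_t(\theta),\theta)\to a(t,Z_t(\theta),\theta)$ for $\mu$-a.e.~$\theta$. Moreover, \cref{eq:bound_a} gives the $n$-independent dominating function
\begin{equation*}
C^{\mathrm{Lip}}|Z_t(\theta)|^{p_1}+2g^{\mathrm{Lip}}(\theta)|Z_t(\theta)|+\bar g_a(\theta),
\end{equation*}
which lies in $L^1_\mu$. Indeed, $|Z_t|^{p_1}\in L^1_\mu$; the product $g^{\mathrm{Lip}}|Z_t|$ is in $L^1_\mu$ by H\"older with exponents $p_1'$ and $p_1$; and $\bar g_a\in L^1_\mu$ by assumption. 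Here it matters that $\bar g_a$ is defined as a supremum over $t$, which is what makes the bound uniform in $n$. The DCT then yields convergence in $L^1_\mu$.

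Combining the two terms gives continuity at every $t$, so $A(Z)\in C^0([0,T],L^1_\mu(\Theta,\R))$. The main obstacle is a measurability subtlety: $Z$ is a $C^0$ curve of equivalence classes, so we must fix a representative of $Z_t$ before applying the pointwise convergence. The splitting above avoids the need for joint pointwise convergence of $Z_{t_n}(\theta)$, since the first term is estimated purely in norm. This is the reason we prefer this splitting over the subsequence argument used in \cref{lem:Bochner_integr}.
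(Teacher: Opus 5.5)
Your proof is correct, but it takes a different route from the paper's.

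The paper does not split the difference. From $Z_{t_m}\to Z_t$ in $L^{p_1}_\mu$ it uses the subsequence principle: every subsequence has a further subsequence that converges $\mu$-a.e. and is dominated by some $h\in L^{p_1}_\mu$ (the partial converse of dominated convergence). Joint continuity of $(t,x)\mapsto a(t,x,\theta)$ then gives pointwise convergence of $a\big(t_{m_k},Z_{t_{m_k}}(\theta),\theta\big)$. By \cref{eq:bound_a}, the integrands are dominated by $C^{\mathrm{Lip}}h^{p_1}+2g^{\mathrm{Lip}}h+\bar g_a$, and the DCT together with uniqueness of the limit finishes the proof.

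Your decomposition replaces the extraction and the dominating function $h$ with the $t$-uniform Lipschitz estimate coming from \cref{eq:lip_a}, which is the computation of \cref{lem:lipschitz_nemytskij_A}. The DCT is then needed only for the fixed element $Z_t$, where the domination is immediate. This buys you three things: no subsequence argument, no need for $h$, and a quantitative bound on the spatial part of the error.

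The paper's route has its own advantage. It uses only continuity of $a$ plus the growth bound \cref{eq:bound_a}. It would therefore survive if the local Lipschitz hypothesis were weakened to a pure growth condition, which is the classical Nemytskij setting. Your argument genuinely needs \cref{eq:lip_a}, although \cref{ass:risk_ingred} does provide it.

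Your closing remark overstates the difficulty of the subsequence route. Fixing representatives of the countably many $Z_{t_{m_k}}$ and of $Z_t$ discards only a countable union of $\mu$-null sets. So there is no measurability obstacle there, and your splitting is a matter of convenience rather than necessity.
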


\begin{proof}
    Let us consider $t\in [0,T]$ and any sequence $(t_m)_m$ such that $t_m \to t$ as $m\to\infty$. As $Z\in C^0\big([0,T], L^{p_1}_\mu(\Theta,\R^n) \big)$, it turns out that $Z_{t_m}\to_{L^{p_1}_\mu} Z_t$ as $m\to \infty$. We shall argue by using the Urysohn Lemma: We consider the sequence $\big(A(Z)_{t_m}\big)_m$, and we show that every subsequence $\big(A(Z)_{t_{m_k}}\big)_k$ has a (not relabeled) sub-subsequence that converges to $A(Z)_t$ in the $L^1_\mu$-norm.
    To see this, we fix a subsequence $\big(A(Z)_{t_{m_k}}\big)_k$, and we take a sub-subsequence such that $Z_{t_{m_k}}(\theta) \to Z_t(\theta)$ for $\mu$-a.e.~$\theta$ and such that $|Z_{t_{m_k}}|\leq h$, with $h\in L^{p_1}_\mu(\Theta,\R)$.
    Moreover, recalling the definition of $A$ in \cref{eq:def_nemytskij_A}, by virtue of the continuity of $(t,x)\mapsto a(t,x,\theta)$ (see \cref{ass:risk_ingred}), we deduce that, for $\mu$-a.e.~$\theta\in \Theta$,
    \begin{equation*}
        A(Z)_{t_{m_k}}(\theta) = a\left(t_{m_k}, Z_{t_{m_k}}(\theta), \theta \right) \longrightarrow a(t, Z_{t}(\theta), \theta ) = A(Z)_t(\theta) \quad \mbox{as } k\to \infty.
    \end{equation*}
    Moreover, leveraging on \cref{eq:bound_a}, we have that
    \begin{equation}
        \left| A(Z)_{t_{m_k}}(\theta) \right| \leq C^{\mathrm{Lip}}|h(\theta)|^{p_1} + 2 g^{\mathrm{Lip}}(\theta)|h(\theta)|+ \bar g_a(\theta)
    \end{equation}
    for $\mu$-a.e.~$\theta\in \Theta$ and for every $k\geq1$. Recalling that $g^{\mathrm{Lip}} \in L^{p_1'}_\mu(\Theta,\R)$ and $\bar g_a \in L^{1}_\mu(\Theta,\R)$, we conclude that $A(Z)_{t_{m_k}} \to_{L^1_\mu}  A(Z)_t$ as $k\to\infty$ by the Lebesgue convergence theorem.
\end{proof}

\begin{lemma} \label{lem:lipschitz_nemytskij_der_A}
    Let \cref{ass:risk_ingred} hold. Let $D_xA\colon L^\infty\big([0,T], L^{p_1}_\mu(\Theta,\R^n) \big) \to L^\infty\big([0,T], \mathcal Y \big)$ be the Nemytskij operator defined in \cref{eq:def_nemytskij_der_A}, where $\mathcal Y \coloneqq \mathcal L \left( L_\mu^{p_1}(\Theta,\R^n); L_\mu^{1}(\Theta,\R) \right)$.
    Then, $D_x A$ is locally Lipschitz continuous, i.e., for every $R>0$ there exists a constant $L_{D_x A}(R)>0$ (depending on $R$) such that 
    \begin{equation*}
        \sup_{t\in [0,T]} \| D_x A(Z)_t - D_x A(Z')_t \|_{\mathcal Y} \leq L_{D_x A}(R) \sup_{t\in [0,T]} \| Z_t - Z'_t \|_{L^{p_1}_\mu(\Theta,\R^n)},
    \end{equation*}
    whenever $\max \Big\{ \sup_{t\in [0,T]} \| Z_t \|_{L^{p_1}_\mu(\Theta,\R^n)}, \sup_{t\in [0,T]} \| Z_t' \|_{L^{p_1}_\mu(\Theta,\R^n)} \Big\} \leq R$.
\end{lemma}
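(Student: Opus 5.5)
The plan mirrors the proof of \cref{lem:lipschitz_nemytskij_A}. The one new ingredient is the identification of the operator norm in $\mathcal Y$. For every $t\in[0,T]$, the element $D_xA(Z)_t - D_xA(Z')_t\in\mathcal Y$ acts as multiplication by the function
\[
\theta\mapsto D_x a\big(t,Z_t(\theta),\theta\big)-D_x a\big(t,Z_t'(\theta),\theta\big).
\]
By H\"older's inequality, its operator norm from $L^{p_1}_\mu(\Theta,\R^n)$ to $L^1_\mu(\Theta,\R)$ is bounded by the $L^{p_1'}_\mu$-norm of this function. This is exactly the identity already used in point a) of the proof of \cref{prop:diff_nemytskij_A}. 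It therefore suffices to bound $\sup_{t}\|D_x a(t,Z_t(\cdot),\cdot)-D_x a(t,Z_t'(\cdot),\cdot)\|_{L^{p_1'}_\mu}$ by a constant times $\sup_t\|Z_t-Z_t'\|_{L^{p_1}_\mu}$.

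Next, I apply \cref{eq:lip_a_der} pointwise in $\theta$, which gives
\[
|D_x a(t,Z_t,\theta)-D_x a(t,Z_t',\theta)|\le \big(L_{\mathrm{der}}(|Z_t|,\theta)+L_{\mathrm{der}}(|Z_t'|,\theta)\big)|Z_t-Z_t'|.
\]
I then use the generalized H\"older inequality with exponents satisfying $\frac{1}{p_1'}=\frac{p_1-2}{p_1}+\frac{1}{p_1}$. This yields
\[
\big\|(L_{\mathrm{der}}(|Z_t|,\cdot)+L_{\mathrm{der}}(|Z_t'|,\cdot))|Z_t-Z_t'|\big\|_{L^{p_1'}_\mu}\le \big\|L_{\mathrm{der}}(|Z_t|,\cdot)+L_{\mathrm{der}}(|Z_t'|,\cdot)\big\|_{L^{\frac{p_1}{p_1-2}}_\mu}\,\|Z_t-Z_t'\|_{L^{p_1}_\mu}.
\]
The requirement $p_1>2$ in \cref{ass:risk_ingred} is exactly what makes the exponent $\frac{p_1}{p_1-2}$ admissible.

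It remains to bound the first factor, using $L_{\mathrm{der}}(r,\theta)=C^{\mathrm{Lip}}_{\mathrm{der}}r^{p_1-2}+g^{\mathrm{Lip}}_{\mathrm{der}}(\theta)$. The contribution of $g^{\mathrm{Lip}}_{\mathrm{der}}$ is at most $2\|g^{\mathrm{Lip}}_{\mathrm{der}}\|_{L^{p_1/(p_1-2)}_\mu}$, which is finite by assumption. For the power term, note that
\[
\big\||Z_t|^{p_1-2}\big\|_{L^{p_1/(p_1-2)}_\mu}=\|Z_t\|_{L^{p_1}_\mu}^{p_1-2}\le R^{p_1-2},
\]
and the same bound holds for $Z_t'$. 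Taking the supremum over $t$ gives the statement with
\[
L_{D_xA}(R)=2\|g^{\mathrm{Lip}}_{\mathrm{der}}\|_{L^{p_1/(p_1-2)}_\mu}+2C^{\mathrm{Lip}}_{\mathrm{der}}R^{p_1-2}.
\]

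The only delicate point is the exponent bookkeeping in the H\"older step, namely matching the integrability of $g^{\mathrm{Lip}}_{\mathrm{der}}$ with the growth $r^{p_1-2}$. Everything else is routine. Measurability of $\theta\mapsto D_xa(t,Z_t(\theta),\theta)$ follows from the Carath\'eodory property II. That $D_xA(Z)$ indeed takes values in $L^\infty([0,T],\mathcal Y)$ follows from \cref{eq:bound_der_a} combined with the same H\"older argument; this is essentially the above estimate with $Z'=0$, together with the bound $\bar g_{D_xa}\in L^{p_1'}_\mu$.
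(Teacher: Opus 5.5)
Your proof is correct and is essentially the paper's argument: the pointwise local $C^{1,1}$ bound followed by a three-factor H\"older inequality with exponents $\frac{p_1-2}{p_1}+\frac{1}{p_1}+\frac{1}{p_1}=1$. The only difference is cosmetic: the paper pairs $L_{\mathrm{der}}$ with $\zeta$ in $L^{p_1'}_\mu$, whereas you first reduce the $\mathcal Y$-norm to the $L^{p_1'}_\mu$-norm of the multiplier and then pair $L_{\mathrm{der}}$ with $Z_t-Z_t'$. Your exponent bookkeeping is the cleaner one and gives $L_{D_xA}(R)=2\|g^{\mathrm{Lip}}_{\mathrm{der}}\|_{L^{p_1/(p_1-2)}_\mu}+2C^{\mathrm{Lip}}_{\mathrm{der}}R^{p_1-2}$; the paper's final display instead carries a spurious power $\frac{p_1}{p_1-1}$ on the $L^{p_1/(p_1-2)}_\mu$-norm (a $p_1'$-th root was not taken), which does not affect local Lipschitz continuity.
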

\begin{proof}
    Let us consider $Z,Z' \in L^\infty\big([0,T], L^{p_1}_\mu(\Theta,\R^n) \big)$. Then, using \cref{ass:risk_ingred}, we compute
    \begin{equation} \label{eq:lipsch_cont_der_A_1}
        \begin{split}
            \sup_{t\in [0,T]} \| &D_x A(Z)_t  - D_x A(Z')_t \|_{\mathcal Y} =
            \sup_{t\in [0,T]} \sup_{\|\zeta\|_{L_\mu^{p_1}}\leq 1} \| D_x A(Z)_t [\zeta] - D_x A(Z')_t[\zeta] \|_{L_\mu^1} \\
            &\leq \sup_{t\in [0,T]} \sup_{\|\zeta\|_{L_\mu^{p_1}}\leq 1}  \int_{\Theta}
            \big| D_x a\big(t, Z_t(\theta), \theta \big) - D_x a\big(t, Z_t'(\theta), \theta \big) \big| |\zeta(\theta)| \,\diff \mu(\theta) \\
            & \leq \sup_{t\in [0,T]} \sup_{\|\zeta\|_{L_\mu^{p_1}}\leq 1} \int_{\Theta}
            \Big( L_{\mathrm{der}}\big(|Z_t(\theta)|,\theta \big) + L_{\mathrm{der}} \big( |Z_t'(\theta)|, \theta \big) \Big) |Z_t(\theta) - Z_t'(\theta)|  |\zeta(\theta)|
            \,\diff \mu(\theta).
        \end{split}
    \end{equation}
Then, we observe that, for every $\zeta \in L_\mu^{p_1}(\Theta,\R^n)$, we have that the map $\theta \mapsto \Big( L_{\mathrm{der}}\big(|Z_t(\theta)|,\theta \big) + L_{\mathrm{der}} \big( |Z_t'(\theta)|, \theta \big) \Big) |\zeta(\theta)| $ belongs to $L_\mu^{p_1'}(\Theta,\R)$, where $p_1' =p_1/(p_1-1)$ is the conjugate exponent of $p_1$. Indeed, recalling that by virtue of \cref{ass:risk_ingred} $L_{\mathrm{der}}(r,\theta)\coloneqq C^{\mathrm{Lip}}_{\mathrm{der}} r^{p_1-2} + g^{\mathrm{Lip}}_{\mathrm{der}}(\theta)$ with $g^{\mathrm{Lip}}_{\mathrm{der}} \in L_\mu^{\frac{p_1}{p_1-2}}(\Theta,\R)$, so that $L_{\mathrm{der}}(|Z_t(\cdot)| ,\cdot), L_{\mathrm{der}}(|Z_t'(\cdot)|, \cdot) \in L_\mu^{\frac{p_1}{p_1-2}}(\Theta,\R)$. Finally, we observe that $(p_1-1)'=(p_1-1)/(p_1-2)$.
Hence, leveraging on the H\"older inequality, we deduce that
\begin{equation} \label{eq:lipsch_cont_der_A_2}
    \begin{split}
        \int_\Theta 
        \Big| L_{\mathrm{der}}\big(|Z_t(\theta)|,\theta \big) &+ L_{\mathrm{der}} \big( |Z_t'(\theta)|, \theta \big) \Big|^{\frac{p_1}{p_1-1}} |\zeta(\theta)|^{\frac{p_1}{p_1-1}}
        \, \diff \mu(\theta) \\
        &
        \leq 
        \left\|   \big| L_{\mathrm{der}}\big(|Z_t(\cdot)|,\cdot \big) + L_{\mathrm{der}} \big( |Z_t'(\cdot)|, \cdot \big) \big|^{\frac{p_1}{p_1-1}} \right\|_{L_\mu^{\frac{p_1-1}{p_1-2}}} 
        \left\| |\zeta(\cdot)|^{\frac{p_1}{p_1-1}} \right\|_{L_\mu^{p_1-1}} \\
        & = \left\|   L_{\mathrm{der}}\big(|Z_t(\cdot)|,\cdot \big) + L_{\mathrm{der}} \big( |Z_t'(\cdot)|, \cdot \big) 
        \right\|_{L_\mu^{\frac{p_1}{p_1-2}}}^{\frac{p_1}{p_1-1}}
        \left\| \zeta(\cdot) \right\|_{L_\mu^{p_1}}^{\frac{p_1}{p_1-1}}
    \end{split}
\end{equation}
Hence, by combining \cref{eq:lipsch_cont_der_A_1,eq:lipsch_cont_der_A_2}, we obtain that
\begin{equation*}
    \begin{split}
    \sup_{t\in [0,T]} & \| D_x A(Z)_t  - D_x A(Z')_t \|_{\mathcal Y}\\
    & \leq
    \sup_{t\in [0,T]} \left\|   L_{\mathrm{der}}\big(|Z_t(\cdot)|,\cdot \big) + L_{\mathrm{der}} \big( |Z_t'(\cdot)|, \cdot \big) 
        \right\|_{L_\mu^{\frac{p_1}{p_1-2}}}^{\frac{p_1}{p_1-1}} \cdot 
        \sup_{t\in [0,T]} \| Z_t -Z_t' \|_{L_\mu^{p_1}} \\
    & \leq \left( \sup_{t\in [0,T]} \left\|   L_{\mathrm{der}}\big(|Z_t(\cdot)|,\cdot \big) \right\|_{L_\mu^{\frac{p_1}{p_1-2}}} + \sup_{t\in [0,T]} \left\| L_{\mathrm{der}} \big( |Z_t'(\cdot)|, \cdot \big) 
        \right\|_{L_\mu^{\frac{p_1}{p_1-2}}} \right)^{\frac{p_1}{p_1-1}}
        \sup_{t\in [0,T]} \| Z_t -Z_t' \|_{L_\mu^{p_1}}.
    \end{split}
\end{equation*}
To conclude, we observe that
\begin{equation*}
    \begin{split}
    \sup_{t\in [0,T]} \left\|   L_{\mathrm{der}}\big(|Z_t(\cdot)|,\cdot \big) \right\|_{L_\mu^{\frac{p_1}{p_1-2}}} &\leq
    \left\|
    g^{\mathrm{Lip}}_{\mathrm{der}}
    \right\|_{L_\mu^{\frac{p_1}{p_1-2}}} + C^{\mathrm{Lip}}_{\mathrm{der}}  \sup_{t\in [0,T]}
    \left\| |Z_t(\cdot)|^{p_1-2}
    \right\|_{L_\mu^{\frac{p_1}{p_1-2}}}\\
    &\leq \left\|
    g^{\mathrm{Lip}}_{\mathrm{der}}
    \right\|_{L_\mu^{\frac{p_1}{p_1-2}}} + C^{\mathrm{Lip}}_{\mathrm{der}}  \sup_{t\in [0,T]}
    \left\| Z_t
    \right\|_{L_\mu^{p_1}}^{p_1-2},
    \end{split}
\end{equation*}
and the same estimate holds when replacing $Z_t$ with $Z'_t$.
\end{proof}

\bibliographystyle{abbrv}
\bibliography{references}

\end{document}